\newtheorem{remark}{Remark}
\newtheorem{defn}[theorem]{Definition}
\newtheorem{lem}[theorem]{Lemma}
\newtheorem{thm}[theorem]{Theorem}
\newtheorem{sche}{Scheme}
\definecolor{db}{rgb}{0.0470,0,0.5294}
\definecolor{dg}{rgb}{0.0,0.392,0.0}
\definecolor{firebrick}{rgb}{0.698,0.133,0.133}
\definecolor{bl}{rgb}{0.0,0.0,0.0}
\definecolor{linen}{rgb}{0.980,0.941,0.902}
\definecolor{ivory}{rgb}{1.0,1.0,0.941}
\definecolor{aliceblue}{rgb}{0.941,0.973,1.0}
\definecolor{beige}{rgb}{0.961,0.961,0.863}
\definecolor{tan}{rgb}{0.824,0.706,0.549}
\definecolor{lightsteelblue}{rgb}{0.690,0.769,0.871}
\definecolor{paleturquoise}{rgb}{0.686,0.933,0.933}
\definecolor{lightblue}{rgb}{0.678,0.847,0.902}
\definecolor{skyblue}{rgb}{0.529,0.808,0.922}
\definecolor{palegoldenrod}{rgb}{0.933,0.910,0.667}
\definecolor{lightgoldenrod}{rgb}{0.933,0.867,0.510}
\definecolor{lightyellow}{rgb}{1.0,1.0,0.878}
\definecolor{yellow}{rgb}{1.0,1.0,0.0}
\definecolor{lightyellow1}{rgb}{1.0,1.0,0.878}
\definecolor{lemonchiffon}{rgb}{1.0,0.980,0.804}
\definecolor{myyellow}{rgb}{1,1,.9}
\definecolor{darkgreen}{rgb}{0.0,0.392,0.0}
\definecolor{darkviolet}{rgb}{0.580,0.0,0.827}
\definecolor{lightsalmon}{rgb}{1.0,0.627,0.478}
\definecolor{orange}{rgb}{1.0,0.647,0.0}
\definecolor{darkblue}{rgb}{0.00,0.00,0.55}
\numberwithin{equation}{section}
\Crefname{table}{Table}{Tables}
\Crefname{figure}{Figure}{Figures}
\newcommand\titlelowercase[1]{\texorpdfstring{\lowercase{#1}}{#1}}
\begin{document}
	
	\title{\large{U\titlelowercase{nconditionally} S\titlelowercase{table}, V\titlelowercase{ariable} S\titlelowercase{tep} DLN M\titlelowercase{ethods for the} A\titlelowercase{llen}-C\titlelowercase{ahn} A\titlelowercase{ctive} F\titlelowercase{luid} M\titlelowercase{odel}: A D\titlelowercase{ivergence-free} P\titlelowercase{reserving} A\titlelowercase{pproach}}} 
	\author{Nan Zheng
			\thanks{
            School of Mathematics, Shandong University, Jinan, Shandong 250100, People’s Republic of China. Email: \href{mailto:202311835@mail.sdu.edu.cn}{202311835@mail.sdu.edu.cn}. } 
            \and  
            Wenlong Pei
            \thanks{
            Department of Mathematics, The Ohio State University, Columbus, OH 43210,USA. Email: \href{mailto:pei.176@osu.edu}{pei.176@osu.edu}.} 
            \and  
            Qingguang Guan
            \thanks{
            School of Mathematics and Natural Sciences, University of Southern Mississippi, 118 College Drive, Hattiesburg, MS, 39406, USA   
            Email: \href{mailto:qingguang.guan@usm.edu}{qingguang.guan@usm.edu}.
            }
            \and  
            Wenju Zhao
            \thanks{
            School of Mathematics, Shandong University, Jinan, Shandong 250100, People’s Republic of China.
            Email: \href{mailto:zhaowj@sdu.edu.cn}{zhaowj@sdu.edu.cn}.
            }
		    }
	\date{\emty}
	\maketitle

    \begin{abstract}
		This paper addresses the divergence-free mixed finite element method (FEM) for nonlinear fourth-order Allen-Cahn phase-field coupled active fluid equations. 
        By introducing an auxiliary variable \( w = \Delta u \), the original fourth-order problem is converted into a system of second-order equations, thereby easing the regularity constraints imposed on standard \( H^2 \)-conforming finite element spaces. 
        To further refine the formulation, an additional auxiliary variable \( \xi \), analogous to the pressure, is introduced, resulting in a mixed finite element scheme that preserves the divergence-free condition in $w = \Delta u$ inherited from the model. 
        A fully discrete scheme is then established by combining the spatial approximation by the divergence-free mixed finite element method with the variable-step Dahlquist–Liniger–Nevanlinna (DLN) time integrator. 
        The boundedness of the scheme is rigorously derived under suitable regularity assumptions. 
        Additionally, an adaptive time-stepping strategy based on the minimum dissipation criterion is carried out to enhance computational efficiency. 
        Several numerical experiments validate the theoretical findings and demonstrate the method’s effectiveness and accuracy in simulating complex active fluid dynamics.

	\end{abstract}
	
	\begin{keywords}
		Allen-Cahn active fluid equations, divergence-free persevered mixed finite element method, variable time step, unconditionally stable.
	\end{keywords}
	
	\begin{AMS}
		35Q92, 65M60, 76D07, 35G20, 76A02
	\end{AMS}

    \section{Introduction}
    The Allen-Cahn active fluid system, comprising self-propelled particles, exhibits unique dynamics resulting from the interaction between active fluid motion and interfacial phase-field evolution. 
    The system extends classical models of incompressible active fluids by incorporating the phase-field dynamics,
    and thus becomes essential for describing a wide range of physical phenomena, such as phase-field governed active suspensions, bacterial colonies, and other biophysical systems, where interfacial effects play a crucial role \cite{ramaswamy2019active, annurev-fluid-010816-060049, pnas.1722505115, qi2022emergence,MR4927938}.
    In the report, we consider the following system of Allen-Cahn active fluid equations on the domain $D$ over time 
    interval $[0,T]$:
	\begin{equation}\label{ACAFs}
		\begin{cases}
			u_{t} - \mu \Delta u + \gamma \Delta^2 u+\nu (u \cdot \nabla)u  + \rho u +  \lambda |u|^2u+ \nabla p =m \nabla \phi,    
			&\text{in}  ~D \times (0,T], \\
			\nabla \cdot u = 0,   
			&\text{in} ~ D \times (0,T],\\
			\phi_t + u \cdot \nabla \phi =- \sigma m ,   
			&\text{in}  ~D \times (0,T],\\
			m= \kappa( - \Delta \phi  + f(\phi))
			&\text{in}  ~D \times (0,T],
		\end{cases}
	\end{equation}
	subject to the following initial and boundary conditions: 
	\begin{align*}
        \begin{cases}
            u(x,0) = u_{0} \\
            \phi(x,0) = \phi_{0}
        \end{cases} \text{in} ~D, \qquad
        \begin{cases}
            u =\Delta u =  0  \\
            \partial_{n} \phi = \partial_{n} m= 0
        \end{cases} ~\text{on}   ~\partial D.
	\end{align*}
    Here $u$ is the velocity field, $p$ the pressure, $\phi$ the phase function taking values in $[-1,1]$ and distinguishing between the two fluid components, and $m$ is the chemical potential.
    The domain $D$ is bounded with Lipschitz boundary $\partial D$. $\partial_{n}$ denotes the partial derivative in the direction of the outward normal vector to $\partial D$.
	Non-negative parameters $\mu$, $\gamma$, and $\nu$ represent the viscosity coefficient, the generic stability coefficient, and the density coefficient, respectively.
	Terms $\rho u$ ($\rho  \in \mathbb{R}$) and $\lambda |u|^2 u$ ($\lambda \geq 0$) correspond to a quartic Landau velocity potential 
	\cite{ramaswamy2019active,toner1998flocks}. 
    The parameter \( \sigma \) is related to the surface tension parameter, and \( \kappa \) the mixing energy density. 
    The Helmholtz free-energy density $F(\phi)$ and its derivative $f(\phi)$ take the form
	\begin{equation}\label{F phi}
		F(\phi) = \frac{1}{4 \eta^2}(\phi^{2} - 1)^{2}, 
		\quad 
		f(\phi) = \frac{d}{d \phi}F(\phi) = \frac{\phi^{3} - \phi}{\eta^2}.
	\end{equation}
    with the parameter $\eta$. 
    The total energy $W$ of the system is the sum of the kinetic energy and the Ginzburg-Landau type of Helmholtz free energy:
	\[
	\mathcal{E} = \int_{D} \frac{1}{2}|u|^2 + \kappa \Big( \frac{|\nabla \phi|^2}{2} + F(\phi) \Big) dx.
	\]
	It satisfies the energy dissipation law, i.e., $W$ is a non-increasing function with respect to time. 

    In the system, the phase-field function \( \phi \) distinguishes between different fluid components, with the sharp interface between them implicitly tracked by a thin, smooth transition layer, known as the diffuse interface.
    This approach avoids the complexity of explicit interface tracking and performs simulations on a fixed mesh grid, providing a convenient numerical method for simulating various interfacial problems.
    However, the coupling of the Allen-Cahn phase field variable with the velocity field introduces additional complexities, particularly in capturing the dynamics of the interface, phase separation, and pattern formation in active matter.
    Meanwhile, the mathematical characterization of active fluid dynamics often involves generalized forms of the Navier-Stokes equations augmented with higher-order dissipative terms and nonlinear active forcing. 
    Hence, the non-linear system of fourth-order equations presents substantial analytical and numerical challenges, 
    despite the effectiveness in capturing rich dynamical behaviors. 
    To address these issues and reduce the regularity requirements, we 
    introduce an auxiliary variable  \( w = -\Delta u \), and 
    reformulate the original fourth-order active fluid 
    equations \eqref{ACAFs} into the following equivalent system of second-order equations 
    \begin{equation}\label{ACAFs 2nd}
		\begin{cases}
			u_{t} - \mu \Delta u - \gamma \Delta w+\nu (u \cdot \nabla)u  + \rho u +  \lambda |u|^2u+ \nabla p =m \nabla \phi,    
			&\text{in}  ~D \times (0,T], \\
			w = -\Delta u, &\text{in} ~ D \times (0, T], \\
			\nabla \cdot u = 0,   
			&\text{in} ~ D \times (0,T],\\
			\phi_t + u \cdot \nabla \phi =- \sigma m ,   
			&\text{in}  ~D \times (0,T],\\
			m=\kappa  (- \Delta \phi  + f(\phi))
			&\text{in}  ~D \times (0,T].
		\end{cases}
	\end{equation}
	subject to the following modified initial-boundary conditions:
    \begin{align*} 
        \begin{cases}
            u(x,0) = u_{0} \\
            \phi(x,0) = \phi_{0}
        \end{cases} \text{in} ~D, \qquad
        \begin{cases}
            u = w =  0  \\
            \partial_{n} \phi = \partial_{n} m= 0
        \end{cases} ~\text{on}   ~\partial D.
	\end{align*}
    The resulting system \eqref{ACAFs 2nd} is eligible for the use of finite element methods based on $H_0^1$-conforming basis functions without 
	the restrictive \( H^2 \)-regularity requirements.
    More importantly, the auxiliary variable inherently satisfying the divergence-free constraint $(\nabla \cdot w = 0)$ preserves physical fidelity and the incompressibility condition, which are crucial for realistic simulations \cite{da2025error,da2017divergence}.

    Finite element methods (FEM), widely recognized for their robustness, adaptability, and efficiency, have become essential tools for spatial discretization in Navier-Stokes equations-related models \cite{MR4293957, he2007stability, MR4835947, abgrall2023hybrid, abgrall2020analysis, ayuso2005postprocessed}.
    For temporal discretization, various approaches have been rigorously studied and applied, including the Euler method, Crank-Nicolson scheme, Runge-Kutta methods, and backward differentiation formulas \cite{ern2022invariant, MR4835947, AAMM-16-5, hou2025unconditionally, baker2024numerical, banjai2012runge, BAI20123265, decaria2022general}.
    Recently, significant efforts have been devoted to the numerical analysis of variable time-stepping schemes, especially regarding their potential for time adaptivity \cite{ait2023time, MR4471049}.

    The family of DLN methods, with one parameter \(\theta \in [0,1]\), ensures unconditional stability and second-order accuracy for dissipative non-linear systems with arbitrary time grids \cite{dahlquist1983stability, LPT21_AML, LPT23_ACSE}.
    This family of schemes has been successfully applied to highly stiff differential equations and various complex fluid models \cite{QHPL21_JCAM, layton2022analysis, MR4866010, CLPX2025_JSC, pei2024semi, SP24_IJNAM,pei2025ensemble}.
    Given the time interval $[0,T]$, $\{ t_{n} \}_{n=0}^{M}$ denotes its partition and  $k_n = t_{n+1} - t_n$ the local time step size. 
	For the initial value problem, $y'(t) = g(t,y(t))$ with $t \in [0,T], \ y(0) = y_{0} \in \mathbb{R}^{d}$, 
	the family of variable time-stepping DLN methods for the problem reads 
    \begin{gather}
		\sum_{\ell =0}^{2}{\alpha _{\ell }}y_{n-1+\ell }
		= \widehat{k}_{n} g \Big( \sum_{\ell =0}^{2}{\beta _{\ell }^{(n)}}t_{n-1+\ell } ,
		\sum_{\ell =0}^{2}{\beta _{\ell }^{(n)}}y_{n-1+\ell} \Big), \qquad n=1,\ldots,M-1.
		\label{eq:1leg-DLN}
	\end{gather}
	$y_{n}$ represents the DLN solution of $y(t)$ at time $t_{n}$,
    and the coefficients of the family of schemes in \eqref{eq:1leg-DLN} are 
	\begin{gather}
		\label{DLNcoeff}
		\begin{pmatrix}
			\alpha _{2} \vspace{0.2cm} \\
			\alpha _{1} \vspace{0.2cm} \\
			\alpha _{0} 
		\end{pmatrix}
		= 
		\begin{pmatrix}
			\frac{1}{2}(\theta +1) \vspace{0.2cm} \\
			-\theta \vspace{0.2cm} \\
			\frac{1}{2}(\theta -1)
		\end{pmatrix}, \ \ \ 
		\begin{pmatrix}
			\beta _{2}^{(n)}  \vspace{0.2cm} \\
			\beta _{1}^{(n)}  \vspace{0.2cm} \\
			\beta _{0}^{(n)}
		\end{pmatrix}
		= 
		\begin{pmatrix}
			\frac{1}{4}\Big(1+\frac{1-{\theta }^{2}}{(1+{%
					\varepsilon _{n}}{\theta })^{2}}+{\varepsilon _{n}}^{2}\frac{\theta (1-{%
					\theta }^{2})}{(1+{\varepsilon _{n}}{\theta })^{2}}+\theta \Big)\vspace{0.2cm%
			} \\
			\frac{1}{2}\Big(1-\frac{1-{\theta }^{2}}{(1+{\varepsilon _{n}}{%
					\theta })^{2}}\Big)\vspace{0.2cm} \\
			\frac{1}{4}\Big(1+\frac{1-{\theta }^{2}}{(1+{%
					\varepsilon _{n}}{\theta })^{2}}-{\varepsilon _{n}}^{2}\frac{\theta (1-{%
					\theta }^{2})}{(1+{\varepsilon _{n}}{\theta })^{2}}-\theta \Big)%
		\end{pmatrix},
	\end{gather}
    where $\varepsilon _{n} = (k_n - k_{n-1})/(k_n + k_{n-1}) \in (-1,1)$ is the step variability.
    The weighted average of time step  $\widehat{k}_n = \alpha_2 k_n - \alpha_0 k_{n-1}$ is to ensure second-order accurate. 
    This family is reduced to the midpoint rule on $[t_{n}, t_{n+1}]$ if $\theta = 1$ and midpoint rule on $[t_{n-1}, t_{n+1}]$ if $\theta = 0$.

	Building on these advancements, we integrate the divergence-free mixed finite element spatial discretization with the DLN temporal discretization, proposing a family of computationally efficient, unconditionally stable, and accurate schemes for the complex dynamics of the Allen-Cahn active fluid equations.
	Additionally, we develop a time-adaptive strategy based on the minimal dissipation criterion \cite{capuano2017minimum}, to strike a balance between computational efficiency and accuracy.

	The key contributions of this work are 
	\begin{itemize}
		\item developing the divergence-free mixed finite element spatial discretization for the fourth-order system Allen-Cahn active fluid equations \eqref{ACAFs} that simplifies the problem and relaxes regularity conditions,
		\item employing the family of variable time-stepping DLN temporal integrators for full discretization, and providing a rigorous proof that the fully discrete schemes adhere to the energy dissipation law unconditionally, regardless of the time grid,
		\item designing a time-adaptive strategy based on the minimum dissipation criterion, significantly improving computational efficiency.
	\end{itemize}

	The remainder of the paper is organized as follows. 
	We introduce necessary preliminaries, notation, and weak formulations of \eqref{ACAFs 2nd} in Section~\ref{sec:sec2}. 
	In Section~\ref{sec:sec3}, we present a family of computationally efficient algorithms via the combination of the divergence-free mixed finite element method and the family of variable time-stepping DLN integrators. 
	We establish the discrete energy-dissipation law for the family of schemes with strict proof in Section~\ref{sec:sec4}.
	In Section~\ref{sec:sec5}, we carry out several numerical tests, including the space–time convergence test with known exact solutions, tests of spinodal decomposition with binary fluids, bubble merging, and 
	circular-bubble shrinking, and the time-adaptive algorithm driven by a minimum-dissipation criterion for the three-dimensional motion by mean curvature.
	Section~\ref{sec:sec6} summarizes the main findings and outlines directions for future research.

	\section{Notation and preliminaries} \label{sec:sec2}
	Given the domain $D \subset \mathbb{R}^{d}$ ($d = 2,3$), 
	$W^{r,p}(D)$ ($1 \leq p \leq \infty$, $r \in \mathbb{N}$) 
	is the Sobolev space equipped with the norm \( \|\cdot\|_{W^{r,p}} \).
	For $p = 2$, it reduces to the Hilbert space \( H^r(D) \) with the norm \( \|\cdot\|_r \). 
	Specifically, the Lebesgue space \( L^2(D) := H^{0}(D) \) is endowed with the standard inner product \((\cdot, \cdot)\) and the corresponding norm \( \|\cdot\| \). 
	For time-dependent functions, we introduce the Bochner space \( L^p([0,T]; H^r(D)) \) equipped with the following norms
	\[
	\| v \|_{\infty,r} = \max_{0 \leq t \leq T} \|v(t)\|_r, 
	\quad
	\| v \|_{p,r} = \Big( \int_{0}^{T}\|v(t)\|_r^p \, \mathrm{d}t \Big)^{\frac{1}{p}}, \text{ for } 1 \leq p < \infty. 
	\]
	The solution space for the velocity \( u \), pressure \( p \), and phase field \( \phi \) in the system \eqref{ACAFs 2nd} are 
	\begin{align*}
		\text{velocity space: } \qquad 
		X &= \Big\{ v \in [H^1(D)]^d : v = 0 ~\text{on}~ \partial D \Big\}, \\
		\text{pressure space: } \qquad 
		Q &= \Big\{ q \in L^2(D) : \int_{D} q \,\mathrm{d}x = 0 \Big\}, \\
		\text{phase field space: } \qquad 
		\Phi  &= H^1(D). 
	\end{align*}
	The divergence-free subspace for the velocity is 
	\begin{equation*}\label{V_space}
		V = \Big\{ v \in X : \nabla \cdot v = 0 \text{ in } D \Big\}.
	\end{equation*}
	We define the trilinear forms $b(\cdot, \cdot, \cdot)$ on $X \times X \times X$, by
	\begin{align}\label{b form}
		b(u,v,w) = \frac{1}{2}(u \cdot \nabla v, w) - \frac{1}{2}(u \cdot \nabla w, v), \quad 
		&u,v,w \in X.
	\end{align}
	The weak form of the system \eqref{ACAFs 2nd} is:  
	finding $(u,w,p,\phi,m) \in (X,V,Q,\Phi,\Phi)$ such that for all $(v,\varphi,q,\vartheta,\varsigma) \in (X,V,Q,\Phi,\Phi)$, and all $t \in (0,T]$ 
	\begin{align} 
		\begin{split} \label{variational-formula-1} 
			&(u_{t}, v)
			+ \mu(\nabla u, \nabla v) 
			+\gamma (\nabla w,\nabla v)
			+ \nu b( u,  u, v)  
			+ \rho (u,v) 
			+ \lambda (|u|^2 u ,v)
			\\
			&\quad 
			- (p, \nabla \cdot v)
			= (m \nabla \phi, v), 
		\end{split}
		\\
		&(w,\varphi) = (\nabla u,\nabla \varphi), \label{variational-formula-2}
		\\
		&(\nabla \cdot u, q) = 0,  \label{variational-formula-3}
		\\
		&	(\phi_{t}, \vartheta)
		+ (u \cdot \nabla \phi,  \vartheta)
		=-\sigma (m,\vartheta), \label{variational-formula-4}
		\\
		&	(m, \varsigma)
		=\kappa (\nabla \phi,\nabla  \varsigma)
		+\kappa(f(\phi), \varsigma), \label{variational-formula-5}
	\end{align} 
	The following equivalent variational formulation can also be derived
	by enforcing the incompressibility constraints in a weak sense:
	finding $(u,w,\xi,p,\phi,m) \in (X,X,Q,Q,\Phi,\Phi)$, such that for all $(v,\varphi,\zeta,q,\vartheta,\varsigma) \in (X,X,Q,Q,\Phi,\Phi)$, and all $t \in (0,T]$
	\begin{align} 
		\begin{split} \label{variational_formula_second_1} 
			&(u_{t}, v)
			+ \mu(\nabla u, \nabla v) 
			+\gamma (\nabla w,\nabla v)
			+ \nu b( u,  u, v)  
			+ \rho (u,v) 
			+ \lambda (|u|^2 u ,v)
			\\
			&\quad 
			- (p, \nabla \cdot v)
			= (m \nabla \phi, v), 
		\end{split}
		\\
		&(w,\varphi)-(\xi,\nabla \cdot \varphi) = (\nabla u,\nabla \varphi), \label{variational_formula_second_2}
		\\
		&(\nabla \cdot u, q) = 0.  \label{variational_formula_second_3}
		\\
		&(\nabla \cdot w, \zeta) = 0,  \label{variational_formula_second_4}
		\\
		&(\phi_{t}, \vartheta)
		+ (u \cdot \nabla \phi,  \vartheta)
		= -\sigma (m,\vartheta),\label{variational_formula_second_5}
		\\
		&(m, \varsigma)
		= \kappa(\nabla \phi,\nabla  \varsigma)
		+\kappa(f(\phi), \varsigma). \label{variational_formula_second_6}
	\end{align}
	Given arbitrary sequence $\{ z_{n} \}_{n=0}^{\infty}$,  we adopt the following notations for convenience
	\begin{align*}
		z_{n,\theta} &= \frac{1+\theta}{2} z_n + \frac{1-\theta}{2} z_{n-1}, 
		\quad 
		z_{n,\alpha} = \sum_{\ell=0}^{2} \alpha_{\ell} z_{n-1+\ell}, 
		\quad 
		z_{n,\beta} = \sum_{\ell=0}^{2} \beta_{\ell}^{(n)} z_{n-1+\ell}
	\end{align*}
	where $\{ \alpha_{\ell} \}_{\ell=0}^{2}$ and $\{ \beta_{\ell}^{(n)} \}_{\ell=0}^{2}$ are coefficents of the DLN method in \eqref{DLNcoeff}.

	\section{Spatial and temporal discretization} \label{sec:sec3}
	\subsection{Spatial discretization}
	Let $\mathscr{T}_h$ ($0<h<1$) be a regular triangulation of $D$ with mesh size $h$
	For the fixed positive integer $r \geq 1$, $P_k(K)$ denotes the space of all polynomials of degree less than or equal to $r$ on the element $K \in \mathscr{T}_h$. 
	The finite element spaces for spatial discretization are:
	\begin{align}
		&X_{h} = \Big\{ v^{h} \in [C^{0}(\bar{D})]^2 \cap X: v^{h} \mid_{K} \in P_{r+1}(K), \ \forall K \in \mathscr{T}_{h} \Big\},   
		\label{zeq:FEM-space-Xh-1} \\
		&Q_{h} = \Big\{ q^{h} \in C^{0}(\bar{D}) \cap L^2(D): q^{h} \mid_{K} \in P_{r}(K), \ \forall K \in \mathscr{T}_{h} \Big\},   
		\label{zeq:FEM-space-Qh-1} \\
		&\Phi_{h} = \Big\{ \vartheta^{h} \in C^{0}(\bar{D}) \cap \Phi: \vartheta^{h} \mid_{K} \in P_{r+1}(K), \ \forall K \in \mathscr{T}_{h} \Big\},   \label{zeq:FEM-space-Jh-1} \\
		&V_{h} = \Big\{ v^{h} \in X_{h}: \big( \nabla \cdot v^{h}, q^{h} \big) = 0, \ \forall q^{h} \in Q_{h} \Big\}.  \label{zeq:FEM-space-Vh-1}
	\end{align}
	The spaces $X_h \times Q_h$ in \eqref{zeq:FEM-space-Xh-1}-\eqref{zeq:FEM-space-Qh-1} should satisfy the discrete Ladyzhenskaya-Babuska-Brezzi condition ($LBB^h$ condition):
	for each $ q^h \in Q_h$, there exists a positive constant $C$ such that
	\begin{equation}\label{LBB}
		\|q^h\| \leq C \sup_{v^h \in X_h \setminus \{0\}} \frac{(\nabla \cdot v^h, q^h)}{\|\nabla v^h\|}.
	\end{equation}
	Taylor-Hood element space, Mini element space, and Scott-Vogelius element space~\cite{MR813691} are all typical finite element spaces having $LBB^h$ condition in \eqref{LBB}. 
	We choose Taylor-Hood element space (with $r=1$) for spatial discretization throughout numerical tests.

	We introduce Ritz projection \cite{he2005stabilized}
	$\mathcal{R}_h: \Phi \rightarrow \Phi_h$ 
	\begin{equation}\label{Ritz-projection-defination}
		(\nabla(\phi - \mathcal{R}_h \phi), \nabla \phi^h)=0, \quad
		( \phi - \mathcal{R}_h \phi, 1 ) = 0, \quad 
		\forall \varphi \in \Phi_h
	\end{equation}
	We also define a Stokes-type projection \( \mathcal{S}_h : (X, X, Q, Q) \rightarrow (X_h, X_h, Q_h, Q_h) \) through the following system: given  \( (u, w, p, \phi) \in (X, X, Q, Q) \), 
	$$ \mathcal{S}_h (u, w, p, \phi) = (\mathcal{S}_h u, \mathcal{S}_h w, \mathcal{S}_h p, \mathcal{S}_h \phi) \in (X_h, X_h, Q_h, Q_h)$$
	uniquely solves 
	\begin{equation}\label{Stokes-type-projection-defination}
		\begin{cases}
			\mu(\nabla(u - \mathcal{S}_{h} u), \nabla v^h) 
			+ \gamma (\nabla(w - \mathcal{S}_{h} w), \nabla v^h) 
			- (p - \mathcal{S}_{h} p, \nabla \cdot v^h) = 0, 
			\\
			(w - \mathcal{S}_{h} w, \varphi^h) - (\phi - \mathcal{S}_{h} \phi, \nabla \cdot \varphi^h)
			= (\nabla(u - \mathcal{S}_{h} u), \nabla \varphi^h),
			\\
			(\nabla \cdot \mathcal{S}_h u, q^h) = 0,
			\\
			(\nabla \cdot \mathcal{S}_h w, \zeta^h) = 0.
		\end{cases}
	\end{equation}
	for all \( (v^h, \varphi^h, q^h, \zeta^h) \in (X_h, X_h, Q_h, Q_h) \).

	\subsection{Fully discrete DLN scheme} 
	We denote $u_n^h$, $w_n^h$, $\xi_n^h$, $p_{n}^h$, $\phi_n^h$, $m_{n}^h$ to be the numerical approximations of $u$, $w$, $\xi$, $p$, $\phi$, $m$ at time $t_n$.  
	The fully discrete DLN scheme of \eqref{variational-formula-1}-\eqref{variational-formula-5} is:
	\begin{sche}[Fully discrete DLN scheme] \label{fully discrete formulations scheme}
		Given $u_n^h, u_{n-1}^h \in X_h,\phi_n^h$, $\phi_{n-1}^h \in \Phi_h$,
		$p_n^h, p_{n-1}^h \in Q_h$ for $n= 1, 2, \dots, M-1$, 
		find $(u_{n+1}^h, w_{n+1}^h,\xi_{n+1}^h, p_{n+1}^h,\phi_{n+1}^h,m_{n+1}^h) \in ( X_h, X_h, Q_h,Q_h,\Phi_h,\Phi_h)$ such that 
		\begin{align} \label{fully discrete formulations 1}
			\begin{split}
				&\frac{1}{\widehat{k}_n}(u_{n,\alpha}^h, v^h)
				+ \mu(\nabla u_{n,\beta}^h, \nabla v^h) 
				+\gamma (\nabla w_{n,\beta}^h,\nabla v^h)
				+ \nu b( u_{n,\beta}^h,  u_{n,\beta}^h, v^h) 
				\\
				&\quad  
				+ \rho (u_{n,\beta}^h,v^h) 
				+ \lambda (|u^h_{n,\beta}|^2 u^h_{n,\beta} ,v^h)
				- (p_{n,\beta}^h, \nabla \cdot v^h)
				= ( m_{n,\beta}^h \nabla \phi_{n,\beta}^h, v^h), 
			\end{split}
			\\
			\begin{split}
				&(w_{n+1}^h,\varphi^h) -(\xi_{n+1}^h,\nabla \cdot \varphi^h)= (\nabla u_{n+1}^h,\nabla \varphi^h),\label{fully discrete formulations 2}
			\end{split}
			\\
			\begin{split}
				&(\nabla \cdot u_{n+1}^h, q^h) = 0, \label{fully discrete formulations 3}
			\end{split}
			\\
			\begin{split}
				&(\nabla \cdot w_{n+1}^h, \zeta^h) = 0, \label{fully discrete formulations 4}
			\end{split}
			\\
			&\frac{1}{\widehat{k}_n}(\phi_{n,\alpha}^h, \vartheta^h)
			+ (u_{n,\beta}^h \cdot \nabla \phi_{n,\beta}^h,  \vartheta^h)
			=- \sigma (m_{n,\beta}^h,\vartheta^h),\label{fully discrete formulations 5}
			\\
			&	(m_{n,\beta}^h, \varsigma^h)
			=\kappa(\nabla \phi_{n,\beta}^h,\nabla  \varsigma^h)
			+\kappa(\widetilde{f}(\phi_{n+1,\theta}^h, \phi_{n,\theta}^h), \varsigma^h), \label{fully discrete formulations 6}
		\end{align} 
		true for all $(v^h, \varphi^h, q^h,\zeta^h,\vartheta^h,\varsigma^h) \in (X_h, X_h,Q_h,Q_h,\Phi_h,\Phi_h)$. Here 
		\begin{align*}
			\widetilde{f}(\phi_{n+1,\theta}^h, \phi_{n,\theta}^h)
			= 
			\begin{cases}
				\dfrac{F(\phi_{n+1,\theta}^h) - F(\phi_{n,\theta}^h)}{\phi_{n+1,\theta}^h - \phi_{n,\theta}^h}, & \text{if } \phi_{n+1,\theta}^h \neq \phi_{n,\theta}^h, \\[2ex]
				f \Big( \dfrac{\phi_{n+1,\theta}^h + \phi_{n,\theta}^h}{2} \Big), & \text{if } \phi_{n+1,\theta}^h = \phi_{n,\theta}^h. 
			\end{cases}
		\end{align*}
	\end{sche}
	\begin{remark}
		In practice, we set $u_0^h = \mathcal{S}_h u(t_0)$, 
		$w_0^h = \mathcal{S}_h w(t_0)$, $p_0^h = \mathcal{S}_h p(t_0)$, 
		$\phi^h_0 = \mathcal{R}_h \phi(t_0)$, 
		$m^h_0 = \mathcal{R}_h m(t_0)$, 
		and employ the fully-implicit Crank-Nicolson scheme to solve for $u_1^h$, $w_1^h$, $\xi_1^h$, $p_1^h$, $\phi_1^h$, $m_1^h$.
		In return, the solution at the first time step is second-order accurate in time and numerically stable due to the stability and consistency of Crank-Nicolson scheme.
	\end{remark}
	\begin{remark}
		$\widetilde{f}$ comes from the variable time-stepping modified DLN schemes for the Allen-Cahn model. 
		For $\theta \in [0,1)$, $\widetilde{f}(\phi_{n+1,\theta}, \phi_{n,\theta})$ is a first-order approximation to $f(\phi(t_{n,\beta}))$ in time under variable time steps and a second-order approximation in time under constant time steps \cite{MR4927938}. 
		The case $\theta = 1$ is reduced to the modified Crank-Nicolson scheme and becomes a second-order approximation in time under arbitrary time grids \cite{XLWB19_CMAME}. 
	\end{remark}

	\section{Discrete energy dissipation law} 
	\label{sec:sec4} \ 
	We utilize the $G$-stability property of the variable time-stepping DLN method \eqref{eq:1leg-DLN} to prove that \Cref{fully discrete formulations scheme} satisfies the discrete energy dissipation law unconditionally under arbitrary time grids.
	\begin{defn}
		For $\theta \in [0 , 1]$ and  $u, v \in [L^2(D)]^2$, the $G$-norm $\| \cdot \|_{G(\theta)}$ is 
		\begin{equation} \label{G-norm}
			\begin{Vmatrix}
				u \\
				v
			\end{Vmatrix}_{G(\theta )}
			=
			\int_{D}
			\begin{bmatrix}u^\top & v^\top 
			\end{bmatrix} G(\theta) 
			\begin{bmatrix}
				u \\
				v
			\end{bmatrix}
			\mathrm{d}D
			=\frac{1}{4}(1+\theta)\|u\|^2 +\frac{1}{4}(1-\theta)\|v\|^2,
		\end{equation}
		where the notation $\top$ represents the transpose of a vector, 
		the symmetric semi-positive definite matrix $G(\theta)$ is a symmetric semi-positive definite matrix with $\mathbb{I}_{2\times 2}$ identity matrix defined as
		\[
		G(\theta) = \begin{bmatrix}
			\frac{1}{4}(1+\theta)\mathbb{I}_{d} & 0 \\
			0 & \frac{1}{4}(1-\theta)\mathbb{I}_{d}
		\end{bmatrix},
		\]
		and $\mathbb{I}_{d} \in \mathbb{R}^{d \times d}$ is the identity matrix.
	\end{defn}
	\begin{lem} \label{G-stable-lemma}
		For any sequence $\{ v_{n} \}_{n=0}^{M} \subset [L^2(D)]^2$, the following $G$-stability identity holds
		\begin{equation}\label{G-stable}
			\big( v_{n,\alpha}, v_{n,\beta} \big)  
			=
			\begin{Vmatrix}
				v_{n+1} \\
				v_n
			\end{Vmatrix}_{G(\theta )}
			-
			\begin{Vmatrix}
				v_{n} \\
				v_{n-1}
			\end{Vmatrix}_{G(\theta )}
			+
			\Big\| \sum_{\ell=0}^{2} a_{\ell}^{(n)} v_{n-1+{\ell}} \Big\|^2,
		\end{equation}
		holds for all $n = 1,2, \cdots, M-1$ and any fixed $\theta \in [0,1]$. Here $\{a_{\ell}^{(n)}\}_{\ell=0}^2$ 
		in \eqref{G-stable} are 
		\begin{equation*} 
			a_1^{(n)}  = -\frac{\sqrt{\theta (1-\theta^2)}}{\sqrt{2}(1+\varepsilon_n \theta)},  \quad
			a_2^{(n)} = -\frac{1-\varepsilon_n}{2} a_1^{(n)} ,   \quad
			a_0^{(n)} = -\frac{1+\varepsilon_n}{2} a_1^{(n)}.   
		\end{equation*}
	\end{lem}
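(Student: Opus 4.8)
\quad The plan is to peel \eqref{G-stable} down to an equality of quadratic forms and then to a short list of scalar identities among the DLN coefficients that are checked by direct substitution. Since the $G$-norm in \eqref{G-norm} and the $L^2$-inner product act componentwise on $D$ and are integrals over $D$, it suffices to prove the scalar version of \eqref{G-stable}: for arbitrary reals $x_0,x_1,x_2$ playing the roles of $v_{n-1},v_n,v_{n+1}$,
\begin{equation*}
\Big(\sum_{\ell=0}^{2}\alpha_\ell x_\ell\Big)\Big(\sum_{\ell=0}^{2}\beta_\ell^{(n)}x_\ell\Big)
= Q(x_2,x_1) - Q(x_1,x_0) + \Big(\sum_{\ell=0}^{2}a_\ell^{(n)}x_\ell\Big)^2 ,
\end{equation*}
where $Q(a,b)=\tfrac14(1+\theta)a^2+\tfrac14(1-\theta)b^2$. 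Both sides are quadratic forms in $x=(x_0,x_1,x_2)^\top$, so the claim is equivalent to an equality of symmetric $3\times3$ matrices.

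First I would write the left-hand side as $x^\top Mx$ with $M_{ij}=\tfrac12\big(\alpha_i\beta_j^{(n)}+\alpha_j\beta_i^{(n)}\big)$. The telescoping difference $Q(x_2,x_1)-Q(x_1,x_0)$ equals $x^\top Dx$ with the diagonal matrix $D=\mathrm{diag}\big(-\tfrac14(1-\theta),\,-\tfrac{\theta}{2},\,\tfrac14(1+\theta)\big)$, while the last term is the rank-one form $x^\top(aa^\top)x$ with $a=\big(a_0^{(n)},a_1^{(n)},a_2^{(n)}\big)^\top$. Hence the lemma reduces to the matrix identity $M-D=aa^\top$, that is, to the six scalar equations obtained by comparing the three diagonal and the three off-diagonal entries; verifying them shows in passing that $M-D$ is positive semidefinite of rank one, which is exactly the $G$-stability of the DLN family.

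Next I would verify those six identities by substituting \eqref{DLNcoeff} and the formulas for $\{a_\ell^{(n)}\}$. It is convenient to abbreviate $A=(1-\theta^2)/(1+\varepsilon_n\theta)^2$ and $B=\varepsilon_n^2\theta A$, so that $\beta_2^{(n)}=\tfrac14(1+A+B+\theta)$, $\beta_1^{(n)}=\tfrac12(1-A)$, $\beta_0^{(n)}=\tfrac14(1+A-B-\theta)$, and $(a_1^{(n)})^2=\tfrac{\theta A}{2}$. The middle diagonal entry is then immediate: $\alpha_1\beta_1^{(n)}+\tfrac{\theta}{2}=\tfrac{\theta A}{2}=(a_1^{(n)})^2$. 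For the remaining five entries, after using $A(1+\varepsilon_n\theta)^2=1-\theta^2$ to clear denominators and cancelling the common factor $1-\theta^2$, each comparison collapses to an elementary polynomial identity got by expanding $(1\pm\varepsilon_n\theta)^2$ — for instance $(1+\theta)(1+\varepsilon_n^2\theta)-(1+\varepsilon_n\theta)^2=\theta(1-\varepsilon_n)^2$ for the $(3,3)$ entry and $(1+\varepsilon_n\theta)^2-(1+2\theta+\varepsilon_n^2\theta^2)=-2\theta(1-\varepsilon_n)$ for the $(2,3)$ entry, with analogous identities for the $(1,1)$, $(1,2)$ and $(1,3)$ entries. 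The degenerate values $\theta=1$ (where $A=B=0$ and all $a_\ell^{(n)}=0$) and $\theta=0$ are checked directly.

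The work is essentially bookkeeping; the only place requiring care is the expansion of the off-diagonal entries of $M$, which couple $\alpha_0,\alpha_2$ with $\beta_0^{(n)},\beta_2^{(n)}$ and therefore carry the full $\varepsilon_n$-dependence through the denominator $(1+\varepsilon_n\theta)^2$. Introducing the abbreviations $A,B$ above and recording the consistency relations $\sum_\ell\alpha_\ell=0$, $\sum_\ell\beta_\ell^{(n)}=1$, $\sum_\ell a_\ell^{(n)}=0$ (so $(1,1,1)^\top$ lies in the kernel of $aa^\top$ and the form $x^\top(M-D)x$ vanishes along it) keeps the algebra short. Alternatively, one may simply invoke that the coefficients in \eqref{DLNcoeff} were constructed precisely so that the one-leg method \eqref{eq:1leg-DLN} is $G$-stable with the matrix $G(\theta)$, and that \eqref{G-stable} is the explicit form of that property; see \cite{dahlquist1983stability}.
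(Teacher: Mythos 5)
Your proposal is correct and takes essentially the same route as the paper, which simply states that the identity follows by "algebraic calculation"; you carry out that calculation explicitly via the matrix identity $M-D=aa^\top$, and the entrywise checks you record (e.g. $\alpha_1\beta_1^{(n)}+\tfrac{\theta}{2}=\tfrac{\theta A}{2}=(a_1^{(n)})^2$ and $(1+\theta)(1+\varepsilon_n^2\theta)-(1+\varepsilon_n\theta)^2=\theta(1-\varepsilon_n)^2$) are accurate and the remaining entries verify in the same way.
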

	\begin{proof}
		The proof of $G$-stability identity in \eqref{G-stable} is just a algebraic calculation. 
	\end{proof}

	\begin{defn}\label{Energy numerical}
		The discrete model energy associated with \Cref{fully discrete formulations scheme} at time $t_n$ is 
		\begin{equation}
			\mathcal{E}_n := 
			\begin{Vmatrix}
				u^h_{n+1} \\
				u^h_{n} 
			\end{Vmatrix}_{G(\theta )}
			+\kappa \Big(
			\begin{Vmatrix}
				\nabla \phi^h_{n+1} \\
				\nabla \phi^h_{n} 
			\end{Vmatrix}_{G(\theta )}
			+ \int_{D} F(\phi_{n+1,\theta}^h) dx	\Big), 
		\end{equation}
	\end{defn}

	\begin{thm}[Discrete Energy dissipation Law] \label{Energy dissipation}
		If $\rho \geq 0$, 
		\Cref{fully discrete formulations scheme} satisfies the following
		discrete energy dissipation law unconditionally  
		\begin{align} 
			\mathcal{E}_{n+1} \leq  \mathcal{E}_n , \quad n = 1,2, \cdots, M-1.
			\label{Energy dissipation inequality}
		\end{align}
		Thus \Cref{fully discrete formulations scheme} is unconditional stable in model energy under arbitrary time grids.
	\end{thm}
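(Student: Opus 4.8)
The plan is to test the fully discrete scheme with the natural multipliers, sum the resulting identities, and then invoke the $G$-stability identity of \Cref{G-stable-lemma} together with convexity-type properties of $\widetilde f$ to collapse everything into the telescoping inequality $\mathcal E_{n+1}\le\mathcal E_n$. First I would take $v^h = 2\widehat k_n\, u_{n,\beta}^h$ in \eqref{fully discrete formulations 1}. The time-derivative term produces $2(u_{n,\alpha}^h, u_{n,\beta}^h)$, which by \Cref{G-stable-lemma} equals $2\big\|{}^{u_{n+1}^h}_{\ u_n^h}\big\|_{G(\theta)}^2 - 2\big\|{}^{u_n^h}_{\ u_{n-1}^h}\big\|_{G(\theta)}^2$ plus the nonnegative dissipation term $2\|\sum_\ell a_\ell^{(n)} u_{n-1+\ell}^h\|^2$. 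The trilinear term vanishes by skew-symmetry of $b$, i.e.\ $b(u_{n,\beta}^h,u_{n,\beta}^h,u_{n,\beta}^h)=0$. The viscosity term $\mu\|\nabla u_{n,\beta}^h\|^2$, the damping term $\rho\|u_{n,\beta}^h\|^2$ (nonnegative since $\rho\ge 0$), and the quartic term $\lambda\||u_{n,\beta}^h|^2 u_{n,\beta}^h\|\cdot(\cdots)=\lambda\int|u_{n,\beta}^h|^4$ are all $\ge 0$ and can be discarded. The pressure term $(p_{n,\beta}^h,\nabla\cdot u_{n,\beta}^h)$ vanishes because $u_{n,\beta}^h$ is a fixed linear combination of the $u_{n-1+\ell}^h$, each of which is discretely divergence-free by \eqref{fully discrete formulations 3}, so $(\nabla\cdot u_{n,\beta}^h, q^h)=0$ for all $q^h\in Q_h$, in particular for $q^h=p_{n,\beta}^h$. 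This leaves the coupling terms $2\widehat k_n\gamma(\nabla w_{n,\beta}^h,\nabla u_{n,\beta}^h)$ and $2\widehat k_n(m_{n,\beta}^h\nabla\phi_{n,\beta}^h, u_{n,\beta}^h)$ on the right.

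Next I would handle the phase-field block. Take $\varsigma^h = 2\widehat k_n\sigma^{-1}\cdot(\text{something})$ — more precisely, the standard Allen--Cahn energy manipulation: choose $\vartheta^h = 2\widehat k_n\kappa\, m_{n,\beta}^h$ in \eqref{fully discrete formulations 5} to get $2\kappa(\phi_{n,\alpha}^h,m_{n,\beta}^h) + 2\widehat k_n\kappa(u_{n,\beta}^h\cdot\nabla\phi_{n,\beta}^h,m_{n,\beta}^h) = -2\widehat k_n\kappa\sigma\|m_{n,\beta}^h\|^2$, and choose $\varsigma^h$ appropriately in \eqref{fully discrete formulations 6} to express $(m_{n,\beta}^h,\cdot)$ in terms of $\nabla\phi_{n,\beta}^h$ and $\widetilde f$. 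The key identity is that, with the secant approximation $\widetilde f$, one has the discrete chain rule
\[
\kappa\big(\nabla\phi_{n,\alpha}^h,\nabla\phi_{n,\beta}^h\big) + \kappa\big(\widetilde f(\phi_{n+1,\theta}^h,\phi_{n,\theta}^h),\phi_{n,\alpha}^h\big)
= \kappa\Big\|{}^{\nabla\phi_{n+1}^h}_{\ \nabla\phi_n^h}\Big\|_{G(\theta)}^2 - \kappa\Big\|{}^{\nabla\phi_n^h}_{\ \nabla\phi_{n-1}^h}\Big\|_{G(\theta)}^2 + \kappa\Big\|\sum_\ell a_\ell^{(n)}\nabla\phi_{n-1+\ell}^h\Big\|^2 + \kappa\!\int_D\! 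F(\phi_{n+1,\theta}^h) - \kappa\!\int_D\! F(\phi_{n,\theta}^h),
\]
where the first two terms on the right come from \Cref{G-stable-lemma} applied to $\nabla\phi^h$, and the $F$-difference comes from the defining telescoping property $\widetilde f(a,b)(a-b) = F(a)-F(b)$ of the secant together with $\phi_{n,\alpha}^h$ being (up to the factor $\alpha$-combination) matched to $\phi_{n+1,\theta}^h-\phi_{n,\theta}^h$; this last matching uses the identity $\alpha_2 z_{n+1}+\alpha_1 z_n+\alpha_0 z_{n-1} = z_{n+1,\theta}-z_{n,\theta}$, which follows directly from \eqref{DLNcoeff}.

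Then I would reconcile the two coupling terms. Taking $\varphi^h = 2\widehat k_n\gamma\, w_{n,\beta}^h$ in (the $\beta$-weighted, $n$-summed version of) \eqref{fully discrete formulations 2} gives $2\widehat k_n\gamma\|w_{n,\beta}^h\|^2 - 2\widehat k_n\gamma(\xi_{n,\beta}^h,\nabla\cdot w_{n,\beta}^h) = 2\widehat k_n\gamma(\nabla u_{n,\beta}^h,\nabla w_{n,\beta}^h)$; the $\xi$-term drops by \eqref{fully discrete formulations 4}, so the troublesome $\gamma(\nabla w_{n,\beta}^h,\nabla u_{n,\beta}^h)$ term in the velocity identity is exactly $\gamma\|w_{n,\beta}^h\|^2\ge 0$ and gets discarded. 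For the forcing term, the scaling between the velocity and phase-field test functions must be chosen so that $2\widehat k_n(m_{n,\beta}^h\nabla\phi_{n,\beta}^h,u_{n,\beta}^h)$ cancels against $2\widehat k_n\kappa(u_{n,\beta}^h\cdot\nabla\phi_{n,\beta}^h,m_{n,\beta}^h)$ — these are negatives of each other after integration by parts (using $u_{n,\beta}^h\in X$, so no boundary term, and $\nabla\cdot u_{n,\beta}^h=0$ pointwise in the divergence-free-conforming case, or in the weak $Q_h$ sense otherwise). This is exactly where the divergence-free preservation of the scheme is used, and it is the step I expect to be the main obstacle: I need $(m_{n,\beta}^h\nabla\phi_{n,\beta}^h, u_{n,\beta}^h) + (u_{n,\beta}^h\cdot\nabla\phi_{n,\beta}^h, m_{n,\beta}^h) = (u_{n,\beta}^h\cdot\nabla(m_{n,\beta}^h\phi_{n,\beta}^h),1) = -(\nabla\cdot u_{n,\beta}^h, m_{n,\beta}^h\phi_{n,\beta}^h)$, which vanishes only because $u_{n,\beta}^h$ is genuinely pointwise divergence-free — this is the payoff of the divergence-free mixed FEM and the reason the nonconforming discrete constraint alone would not suffice for the forcing cancellation unless $m_{n,\beta}^h\phi_{n,\beta}^h$ happens to lie in $Q_h$. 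Assuming the divergence-free conforming setting (as the paper emphasizes), after choosing the coupling constant $\sigma^{-1}$ to balance scalings, summing all identities yields
\[
2\,\mathcal E_{n+1} - 2\,\mathcal E_n + (\text{nonnegative dissipation terms}) = 0,
\]
and discarding the nonnegative terms gives \eqref{Energy dissipation inequality}. The remaining routine work is bookkeeping of the positive constants $2\widehat k_n>0$ (positivity of $\widehat k_n$ under $\theta\in[0,1]$ is standard for DLN) and verifying the secant telescoping identity for $\widetilde f$ in both branches of its definition.
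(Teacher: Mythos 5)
Your overall strategy is the same as the paper's: test \eqref{fully discrete formulations 1} with $u_{n,\beta}^h$, use the $\beta$-weighted versions of \eqref{fully discrete formulations 2}--\eqref{fully discrete formulations 4} (tested with $w_{n,\beta}^h$, $p_{n,\beta}^h$, $\xi_{n,\beta}^h$) to turn $\gamma(\nabla w_{n,\beta}^h,\nabla u_{n,\beta}^h)$ into $\gamma\|w_{n,\beta}^h\|^2$ and to eliminate the pressure and $\xi$ contributions, kill the trilinear term by skew-symmetry, test \eqref{fully discrete formulations 5} and \eqref{fully discrete formulations 6} with the natural multipliers, use the secant identity $\widetilde f(a,b)(a-b)=F(a)-F(b)$ together with $z_{n,\alpha}=z_{n+1,\theta}-z_{n,\theta}$, apply the $G$-stability identity of \Cref{G-stable-lemma} to $u^h$ and $\nabla\phi^h$, and discard the nonnegative dissipation terms using $\rho\ge 0$. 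All of that matches the paper.

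The genuine flaw is in the step you yourself single out as the main obstacle: the cancellation of the capillary forcing against the transport term. The two quantities $(m_{n,\beta}^h\nabla\phi_{n,\beta}^h,\,u_{n,\beta}^h)$ and $(u_{n,\beta}^h\cdot\nabla\phi_{n,\beta}^h,\,m_{n,\beta}^h)$ are \emph{the same integral} $\int_D m_{n,\beta}^h\,u_{n,\beta}^h\cdot\nabla\phi_{n,\beta}^h\,\mathrm{d}x$, not negatives of one another; the first appears on the right-hand side of the momentum identity and the second on the left-hand side of the phase-transport identity, so they cancel identically once the identities are combined --- no integration by parts, no pointwise divergence-freeness of $u_{n,\beta}^h$, and no condition of the type $m^h\phi^h\in Q_h$ is needed (indeed the paper runs Taylor--Hood elements, which are not pointwise divergence-free, and the theorem still holds). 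Your auxiliary identity $(m\nabla\phi,u)+(u\cdot\nabla\phi,m)=(u\cdot\nabla(m\phi),1)$ is also false, since $\nabla(m\phi)=m\nabla\phi+\phi\nabla m$. Moreover, the scalings you propose would actually destroy the cancellation: with $\vartheta^h=2\widehat k_n\kappa\,m_{n,\beta}^h$ the transport term acquires a factor $\kappa$ that the momentum forcing does not have, and the matching choice of $\varsigma^h$ would then put $\kappa^2$ in front of the phase-field energy, inconsistent with $\mathcal{E}_n$. The correct choices are simply $\vartheta^h=m_{n,\beta}^h$ and $\varsigma^h=\widehat k_n^{-1}\phi_{n,\alpha}^h$, with the single factor $\kappa$ supplied by \eqref{fully discrete formulations 6}; no ``coupling constant $\sigma^{-1}$'' is needed, as the $\sigma$-term just contributes $+\sigma\widehat k_n\|m_{n,\beta}^h\|^2\ge 0$, which is discarded. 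With the coupling step corrected in this way, the rest of your argument goes through and reproduces the paper's proof of \eqref{Energy dissipation inequality}.
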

	\begin{proof}
		By \eqref{fully discrete formulations 2}--\eqref{fully discrete formulations 4}, we have 
		\begin{align*}
			(w_{n,\beta}^h,\phi^h) - (\phi_{n,\beta}^h,\nabla \cdot \phi^h) 
			&= (\nabla u_{n,\beta}^h,\nabla \phi^h), 
			\quad \forall \phi^h \in X_h, \\
			(\nabla \cdot u_{n,\beta}^h,q^h) &=0, \quad \quad \quad \quad \quad \quad \ 
			\forall q^h \in Q_h, \\
			(\nabla \cdot w_{n,\beta}^h, \zeta^h) &= 0, 
			\quad \quad \quad \quad \quad \quad \  \forall \zeta^h \in Q_h.
		\end{align*}
		We choose $\phi^h = w_{n,\beta}^h$, $\zeta^h = \phi_{n,\beta}^h$, 
		$q^h = p^h_{n,\beta}$, 
		$v^h = u^h_{n,\beta}$ in \eqref{fully discrete formulations 1}, add four equations together and use the skew-symmetric property of $b$ to obtain 
		\begin{align} \label{Energy dissipation-eq1}
			\begin{split}
				&\frac{1}{\widehat{k}_n} (u_{n,\alpha}^h, u^h_{n,\beta}) 
				+ \mu \| \nabla u^h_{n,\beta} \|^2 + \gamma \| w_{n,\beta}^h \|^2
				+\rho \| u^h_{n,\beta} \|^2 + \lambda \| u^h_{n,\beta} \|_{L^4}^4 
				\\
				&
				= (m_{n,\beta}^h \nabla \phi_{n,\beta}^h, u^h_{n,\beta}).
			\end{split}
		\end{align}
		By setting $\vartheta^h = m^h_{n,\beta}$ in \eqref{fully discrete formulations 5}, $\varsigma^h =\frac{1}{\widehat{k}_n} \phi_{n,\alpha}^h = \frac{1}{\widehat{k}_n}(\phi^h_{n+1,\theta} - \phi^h_{n,\theta})$ in \cref{fully discrete formulations 6}, we have
		\begin{align}
			&\frac{1}{\widehat{k}_n}(\phi_{n,\alpha}^h, m_{n,\beta}^h)
			+ (u_{n,\beta}^h \cdot \nabla \phi_{n,\beta}^h,  m_{n,\beta}^h)
			= -\sigma (m_{n,\beta}^h,m_{n,\beta}^h),
			\label{Energy dissipation-eq2}
			\\
			&\frac{1}{\widehat{k}_n}	(m_{n,\beta}^h, \phi_{n,\alpha}^h)
			= \frac{1}{\widehat{k}_n}\kappa(\nabla \phi_{n,\beta}^h,\nabla  \phi_{n,\alpha}^h)
			+\frac{1}{\widehat{k}_n}\kappa (\widetilde{f}(\phi_{n+1,\theta}^h, \phi_{n,\theta}^h), \phi^h_{n+1,\theta} - \phi^h_{n,\theta}). 
			\label{Energy dissipation-eq3}
		\end{align}
		We add \eqref{Energy dissipation-eq1} - \eqref{Energy dissipation-eq3} and use the fact 
		\begin{gather*}
			\widetilde{f} \big( \phi_{n+1,\theta}^h,\, \phi_{n,\theta}^h \big) \big( \phi_{n+1,\theta}^h - \phi_{n,\theta}^h \big)    
			= \big( F ( \phi_{n+1,\theta}^h ) - F ( \phi_{n,\theta}^h ), 1 \big)  
		\end{gather*}
		to derive
		\begin{align*}
			\begin{split}
				& (u_{n,\alpha}^h, u^h_{n,\beta}) 
				+ \mu \widehat{k}_n \| \nabla u^h_{n,\beta} \|^2 + \gamma \widehat{k}_n \| w_{n,\beta}^h \|^2
				+\rho \widehat{k}_n \| u^h_{n,\beta} \|^2 + \lambda\widehat{k}_n \| u^h_{n,\beta} \|_{L^4}^4 
				\\
				&
				+ \kappa(\nabla \phi_{n,\beta}^h,\nabla  \phi_{n,\alpha}^h)+ \kappa \big( F ( \phi_{n+1,\theta}^h ) - F ( \phi_{n,\theta}^h ), 1 \big) = -\sigma\widehat{k}_n (m_{n,\beta}^h,m_{n,\beta}^h).
			\end{split}
		\end{align*}
		Next, we apply $G$-stability identity in \eqref{G-stable} to 
		the above equation and achieve 
		\begin{align*}
			&\begin{Vmatrix}
				u^h_{n+1} \\
				u^h_{n} \\
			\end{Vmatrix}_{G(\theta )}
			+\kappa \begin{Vmatrix}
				\nabla \phi^h_{n+1} \\
				\nabla \phi^h_{n} \\
			\end{Vmatrix}_{G(\theta )}
			+ \kappa \int_{D} F(\phi_{n+1,\theta}^h) dx 
			+ \mu \widehat{k}_n \| \nabla u^h_{n,\beta} \|^2 
			+ \gamma \widehat{k}_n \| w_{n,\beta}^h \|^2
			\\
			&+ \!\Big\| \sum_{\ell=0}^{2} a_{\ell}^{(n)} u^h_{n-1+\ell} \Big\|^2
			\!\!+\! \Big\| \sum_{\ell=0}^{2} a_{\ell}^{(n)} \nabla \phi^h_{n-1+\ell} \Big\|^2
			\!\!+\! \rho\widehat{k}_n \| u_{n,\beta}^h \|^2
			\!+\! \lambda \widehat{k}_n \| u^h_{n,\beta} \|_{L^4}^4
			\!+\! \sigma \widehat{k}_n \| m_{n,\beta}^h \|^2
			\\
			&\leq 
			\begin{Vmatrix}
				u^h_{n} \\
				u^h_{n-1} \\
			\end{Vmatrix}_{G(\theta )}
			+ \kappa \begin{Vmatrix}
				\nabla \phi^h_{n} \\
				\nabla \phi^h_{n-1} \\
			\end{Vmatrix}_{G(\theta )}
			+  \kappa \int_{D} F \big( \phi_{n,\theta}^h \big) dx 
		\end{align*}
		which implies \eqref{Energy dissipation inequality} under the assumption $\rho \geq 0$.
	\end{proof}

	\section{Numerical results}  
	\label{sec:sec5}
	In this section, we present several numerical experiments to validate theoretical results of \Cref{fully discrete formulations scheme}
	(with the parameter $\theta = 0.3$).  
	For spatial discretization, we use Taylor-Hood $\mathbb{P}2/\mathbb{P}1$ finite element space for the velocity $u$ and pressure $p$, $\mathbb{P}2$ space for other variables throughout all experiments. 
	We first conduct one convergence test to verify both the spatial and temporal accuracy of \Cref{fully discrete formulations scheme},
	and then examine its performance for the phase-field shape relaxation and fluid self-organization in active fluid with 
	the setup of parameters in~\cite{MR4736040,MR4500252} and random initial conditions for the velocity field.
	This investigation serves to assess the robustness of \Cref{fully discrete formulations scheme} as well as the effectiveness of the time-adaptive strategy following the minimal dissipation criterion.
	
	\subsection{Convergence Test}
	To validate the convergence rate of Scheme \ref{fully discrete formulations scheme} in both space and time, we construct the test problem on the unit square domain \( D = [0,1] \times [0,1] \) with the following exact solution
	\begin{align*}
		&\begin{bmatrix}
			u_{1} \\ u_{2}
		\end{bmatrix} = 
		\begin{bmatrix}
			( -\cos(2\pi x+\pi)-1)\sin(2 \pi y) \exp(2t) \\
			-\sin(2\pi x) \cos(2\pi y) \exp(2t)
		\end{bmatrix},  \\
		&\begin{bmatrix}
			w_{1} \\ w_{2}
		\end{bmatrix} = 
		\begin{bmatrix}
			(-3x^2 + 3y^2 + 8\pi^2 \sin(2\pi y) \cos(2\pi x) - 4\pi^2 \sin(2\pi y)) \exp(2t) \\
			(6xy - 8\pi^2\sin(2\pi x)\cos(2\pi y))\exp(2t)
		\end{bmatrix},  \\
		&\ \ \xi = (x^3-3xy^2)\exp(2t), \quad  p = \sin(3\pi^2x)\cos(3\pi^2 y)\exp(-t)
		\\
		&\ \ \phi =\big( \cos(4\pi x) \cos(4\pi y)+\cos(3\pi x) \cos(3\pi y)\big)\exp(-t),
		\quad m = - \Delta \phi + \phi^3-\phi.
	\end{align*}
	We set the physical parameters to be \( \mu = 1 \), \( \gamma = 1 \), \( \nu = 1 \), \( \rho = 1 \), \( \lambda = 1 \), \(\kappa=1\), \(\sigma=1\), and simulate the problem on the time interval $[0,1]$.
	The exact solution decides the source function and boundary conditions.

	We set the constant time step size \( \Delta t = \frac{1}{4}, \frac{1}{8}, \frac{1}{16}, \frac{1}{32} \) and fix the uniform mesh diameter \( h = \frac{1}{64} \) to verify the convergence rate in time. 
	Meanwhile we adjust \( h = \frac{1}{4}, \frac{1}{8}, \frac{1}{16}, \frac{1}{32} \) and keep \( \Delta t = 1 \times 10^{-5} \) to confirm the convergence rate in space. 
	The convergence rates are obtained by least squares fitting of the logarithmic values of the errors with respect to the time step \( \Delta t \) and uniform mesh size \( \Delta h \), and thus approximated by the slope of the best-fit line.

	As shown in \Cref{tab:$L^2$-errors and convergence rates in time}, the observed temporal convergence rates for the velocity \( u \), auxiliary variable \(w\), \(\xi\), pressure \( p \), phase field $\phi$, and $m$
	are second-order as expected. 
	Table~\ref{tab:$L^2$-errors and convergence rates in space} and \ref{tab:$H^1$-errors and convergence rates in space} demonstrate that the spatial convergence rates for the velocity component \( u \), auxiliary variable \( w \), phase field $\phi$ and $m$ are third-order in the \( L^2 \) and second-order in the \( H^1 \) norms, while the convergence rates for both \(\xi\) and pressure \( p \) are second-order in \( L^2 \)-norm and first-order in the \( H^1 \)-norm.

	\begin{table}
		\centering
		\caption{$L^2$-errors and convergence rates in time}
		\begin{tabular}{@{}lllllll@{}}
			\hline
			$1 / \Delta t$ & $\|u - u^h\|$ & $\|w - w^h\|$ & $\|\xi- \xi^h\|$ & $\|p - p^h \|$ & $\|\phi - \phi^h\|$ & $\|m - m^h\|$ \\
			\hline
			$4$   &3.01E-01  & 1.59E+01& 5.15E-01  & 1.59E+02& 6.67E-02& 2.87E+00 \\
			$8$   & 5.72E-02  &3.01E+00    & 9.78E-02& 3.01E+01  & 1.94E-02 & 6.89E-01  \\
			$16$ &1.18E-02   & 6.22E-01      & 2.02E-02   &6.21E+00   & 4.90E-03&1.77E-01 \\
			$32$ & 2.61E-03& 1.38E-01    & 4.52E-03 & 1.38E+00  & 1.22E-03& 2.21E-02  \\
			$Rate$ & 2.28 & 2.28    & 2.28   & 2.28  & 1.93 & 2.30  \\
			\hline
		\end{tabular}
		\label{tab:$L^2$-errors and convergence rates in time}
	\end{table}

	\begin{table}
		\caption{$L^2$-errors and convergence rates in space}
		\label{tab:L2_errors_convergence_space}
		\centering
		\begin{tabular}{@{}lllllllllllll@{}}
			\hline
			$1 / \Delta h$ & $\|u - u^h\|$  & $\|w - w^h\|$  & $\|\xi - \xi^h\|$ & $\|p - p^h\|$  & $\|\phi - \phi^h\|$ & $\|m - m^h\|$ \\
			\hline
			$16$   & 8.12E-04  & 6.20E-02   & 2.33E-03  & 9.40E-01  & 3.38E-03 & 5.64E+00 \\
			$32$   &1.02E-04   & 7.78E-03  & 3.31E-04& 8.48E-02  & 4.90E-04   & 7.83E-01  \\
			$64$ & 1.28E-05   & 9.74E-04    & 1.11E-04   &1.16E-02    & 6.35E-05& 2.08E-01 \\
			$128$ & 1.60E-06 & 1.22E-04    & 1.38E-05   & 2.35E-03  & 8.02E-06   & 2.58E-02   \\
			$Rate$ & 2.997 & 2.997   & 2.292  & 2.587  & 2.967  & 2.662   \\
			\hline
		\end{tabular}
		\label{tab:$L^2$-errors and convergence rates in space}
	\end{table}

	\begin{table}[h]
		\centering
		\caption{$H^1$-errors and convergence rates in space}
		\label{tab:H1-errors-space}
		\begin{tabular}{@{}lllllllllllll@{}}
			\hline
			$1 / \Delta h$ & $\|u - u^h\|$& $\|w - w^h\|$ & $\|\xi - \xi^h\|$  & $\|p - p^h\|$ & $\|\phi - \phi^h\|$ & $\|m - m^h\|$ \\
			\hline
			$16$   & 9.78E-02   & 7.56E+00   & 1.64E-01  & 2.54E+01 & 4.86E-01& 6.69E+02\\
			$32$   &2.47E-02  & 1.90E+00  & 7.71E-02 &1.06E+01  & 1.25E-01  & 2.21E+02 \\
			$64$ & 6.18E-03     & 4.76E-01  & 3.83E-02   &4.96E+00  & 3.17E-02 & 1.11E+02  \\
			$128$ & 1.55E-03  & 1.19E-01   & 1.91E-02  & 2.44E+00   & 7.95E-03  & 2.37E+01   \\
			$Rate$ & 1.997 & 1.998 & 1.007& 1.060  & 1.987& 1.611  \\
			\hline
		\end{tabular}
		\label{tab:$H^1$-errors and convergence rates in space}
	\end{table}

	\subsection{Phase-field spinodal decomposition and fluid self-organization in active fluid}
	\label{Phase-field spinodal decomposition and fluid self-organization in active fluid}
	We simulate the spinodal decomposition of binary fluids within the context of the Allen-Cahn active-fluid system \eqref{ACAFs}, using \Cref{fully discrete formulations scheme} to investigate the interface dynamics and the Allen-Cahn phase field coupled active fluid system. 
	The computational domain is \( D = [0, 1.5]^2 \), and the initial conditions for the phase field and velocity are given by
	\[
	\phi_0(x,y) = \big( \text{rand}_1(x,y), \, \text{rand}_1(x,y) \big),
	\ \ 
	u_0(x,y) = \big( \text{rand}_2(x,y), \, \text{rand}_2(x,y) \big), 
	\ \ (x,y) \in D, 
	\]
	where \(\text{rand}_1(x, y)\) represents a uniform random variable in the interval \([-0.001, 0.001]\), and \(\text{rand}_2(x, y)\) represents a uniform random variable in the interval \([-1, 1]\).
	The boundary conditions are $u|_{\partial D} = \Delta u|_{\partial D} = 0$ and $\partial_n \phi = \partial_n m = 0$ on $\partial D$.
	The parameters are set to be $\mu = \nu = \beta = \alpha = \gamma = 1$, $\sigma = 100$, and $\kappa = 0.01$. 
	The simulation is carried out over the time interval $[0,2]$
	with the constant time step size $\Delta t = 0.01$ and the uniform mesh size \( h = \frac{1}{64} \).  

	\Cref{fig:Phase-field spinodal decomposition and self-organization in active fluid} shows the evolution of the velocity vector field, velocity magnitude, and phase field at selected time instances. 
	At \( t = 0 \), the phase-field graph displays the initial mixture of two distinct fluids, with the velocity field exhibiting random perturbations. 
	As time $t$ goes to 0.3, the velocity field forms small vortical structures as two fluids begin to separate. 
	By \( t = 1 \), the separation continues, and the velocity vortices become more pronounced. 
	At the final time \( t = 2 \), two fluids are fully separated, and the velocity field has developed into a well-organized flow structure with distinct vortices, illustrating the self-organization of the active fluid system. 
	This evolution captures both the dynamics of spinodal decomposition and the formation of velocity vortices, demonstrating the effectiveness of \Cref{fully discrete formulations scheme}.
	\Cref{fig:Energy dissipation} validates the discrete energy dissipation law in \eqref{Energy dissipation inequality}: the discrete model energy
 	declines rapidly in an extremely short time at the beginning, and then decreases steadily over the rest of the time. 

	\begin{figure}[htbp] 
		\centering
		\begin{minipage}[t]{0.24\linewidth}
			\centering
			\includegraphics[width=3.2cm]{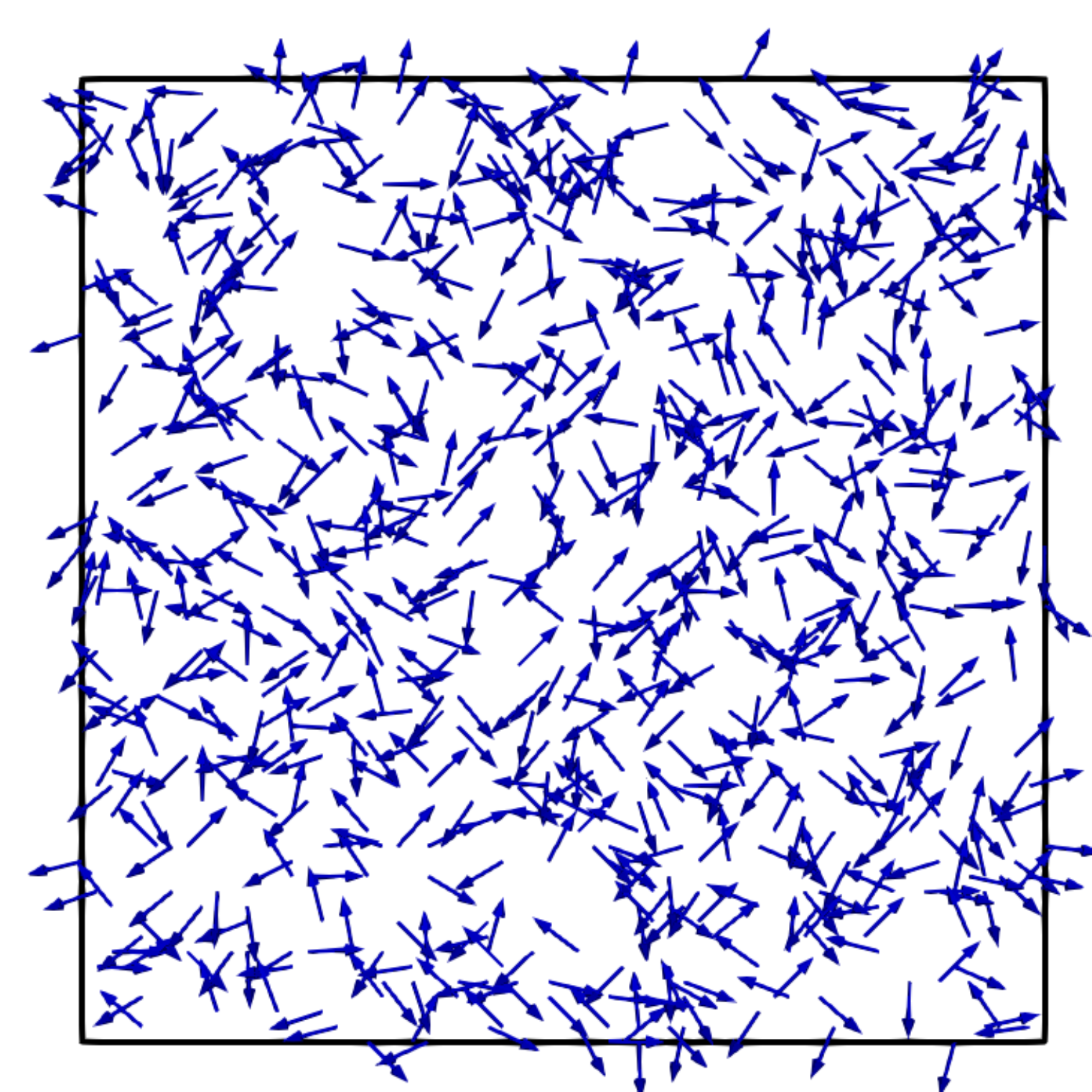} 
			\caption*{$t=0$}
		\end{minipage}
		\begin{minipage}[t]{0.24\linewidth}
			\centering
			\includegraphics[width=3.2cm]{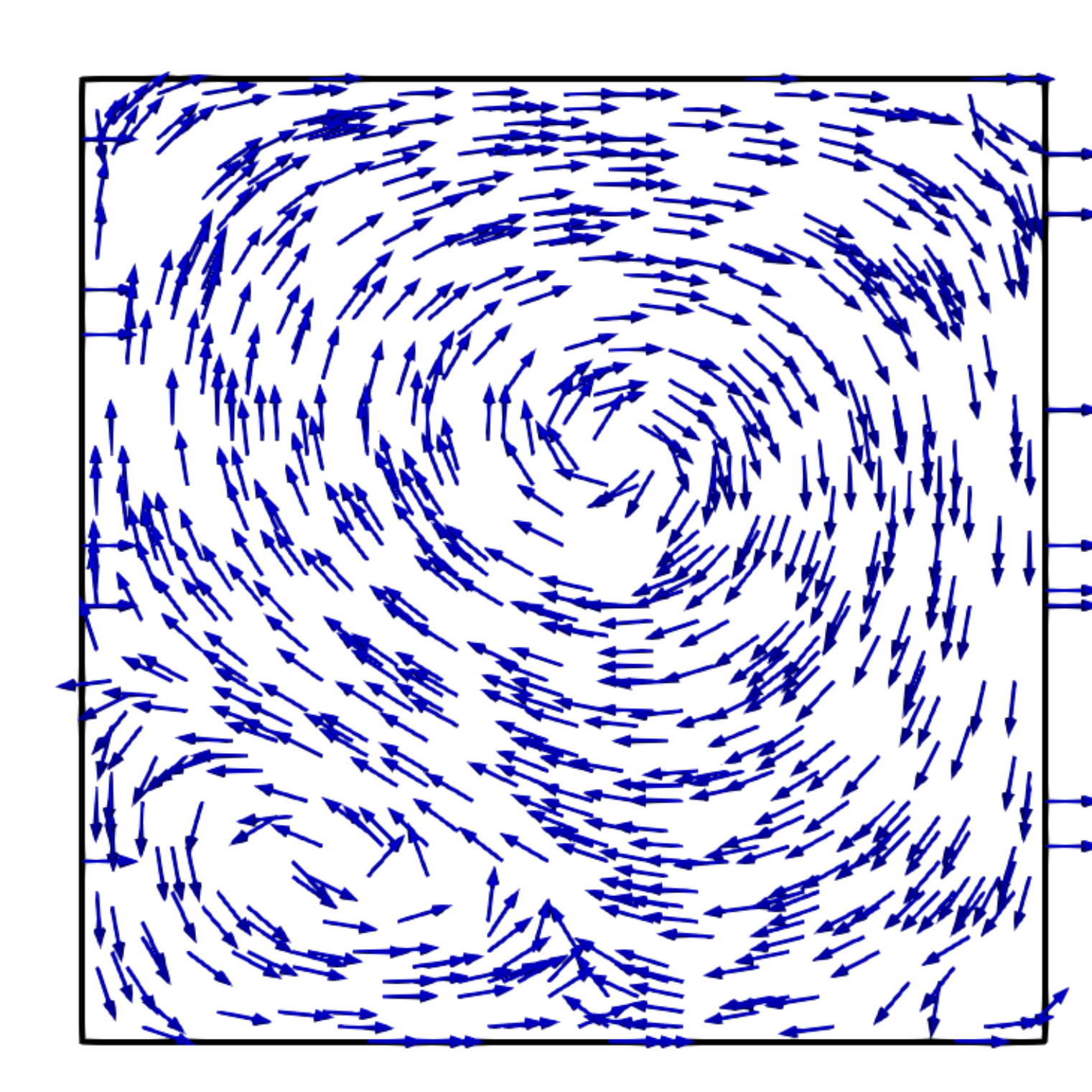} 
			\caption*{$t=0.3$}
		\end{minipage}
		\begin{minipage}[t]{0.24\linewidth}
			\centering
			\includegraphics[width=3.2cm]{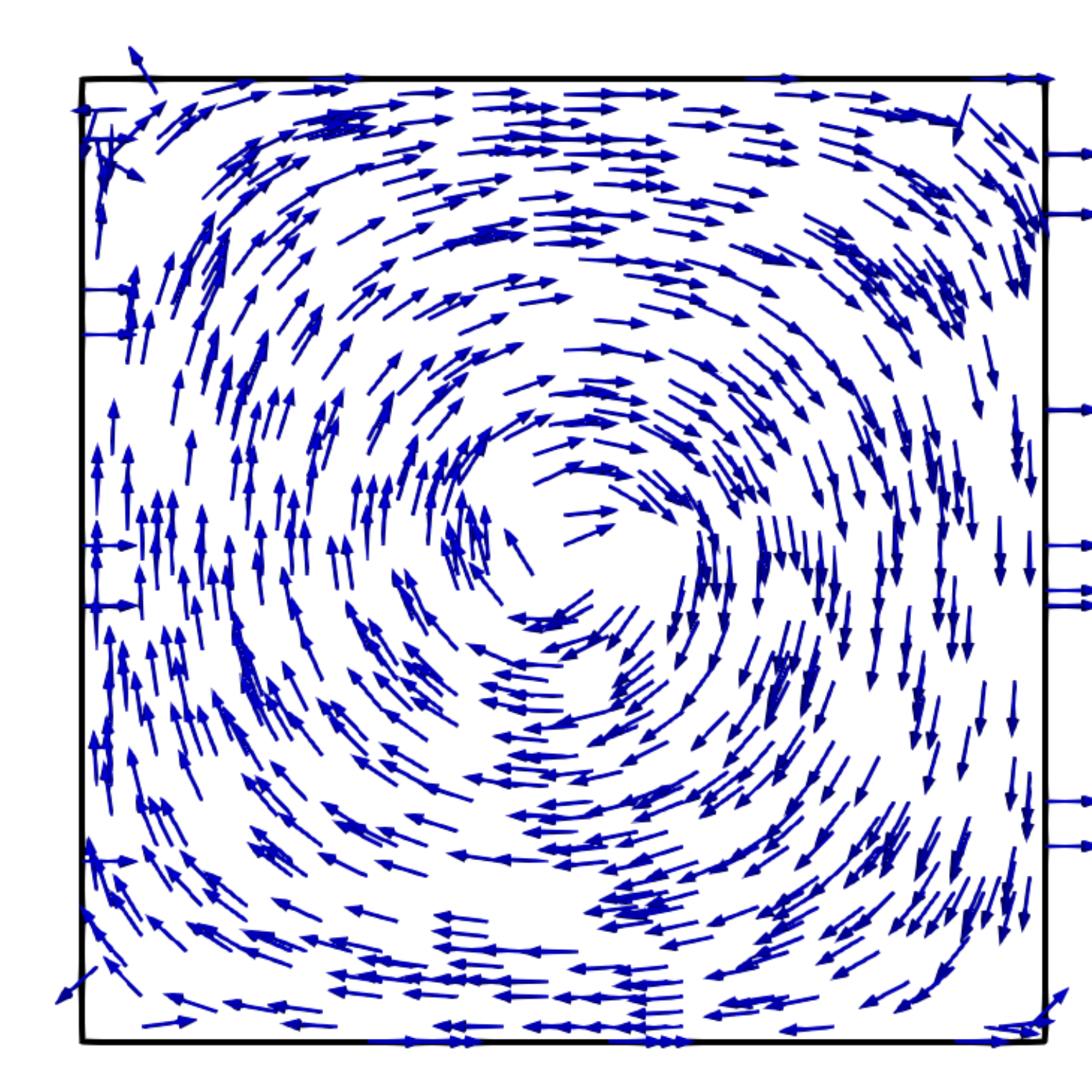} 
			\caption*{$t=1$}
		\end{minipage} 
		\begin{minipage}[t]{0.24\linewidth}
			\centering
			\includegraphics[width=3.2cm]{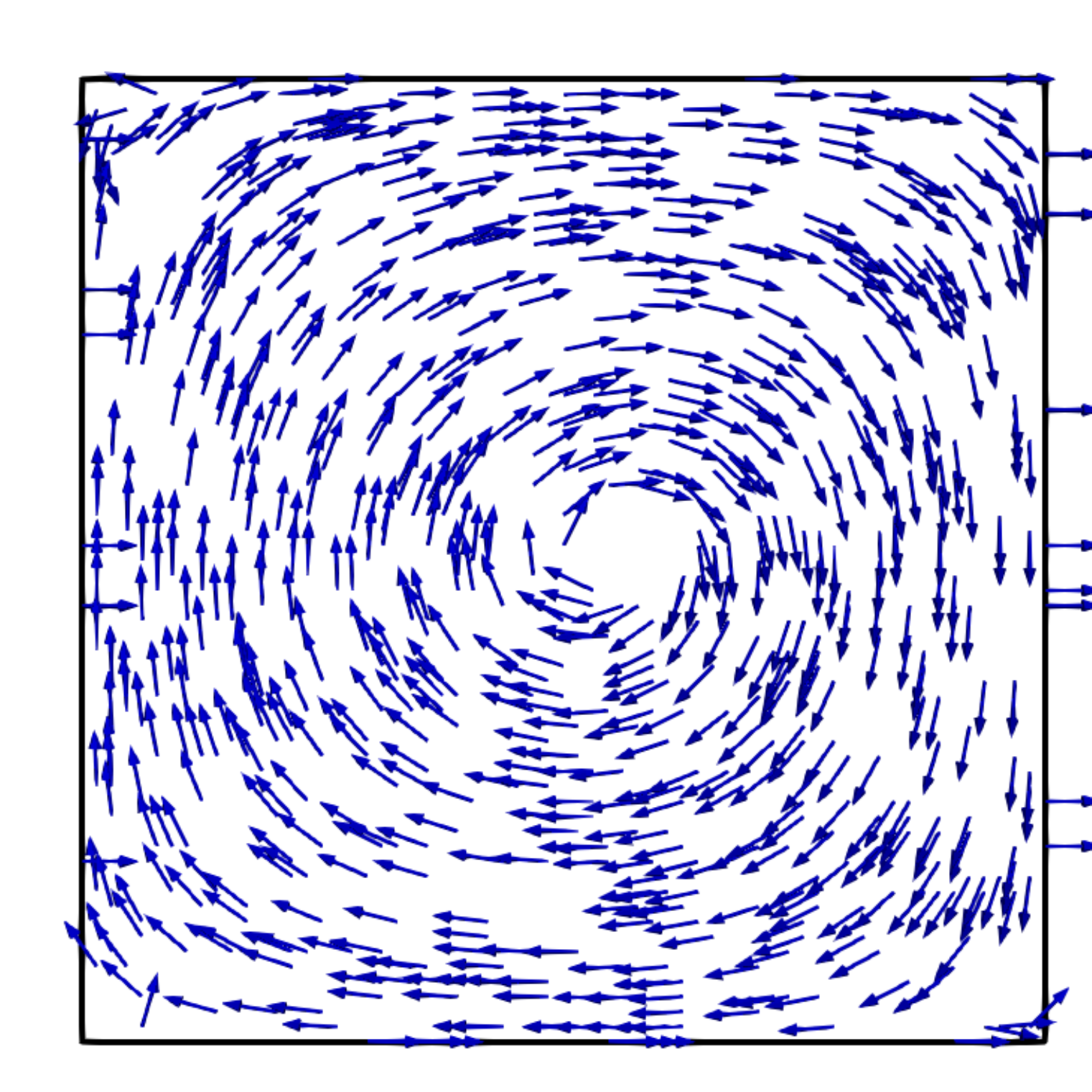} 
			\caption*{$t=2$}
		\end{minipage} \\
		\vspace{-0.1cm}
		\begin{minipage}[t]{0.24\linewidth}
			\centering
			\includegraphics[width=3.1cm]{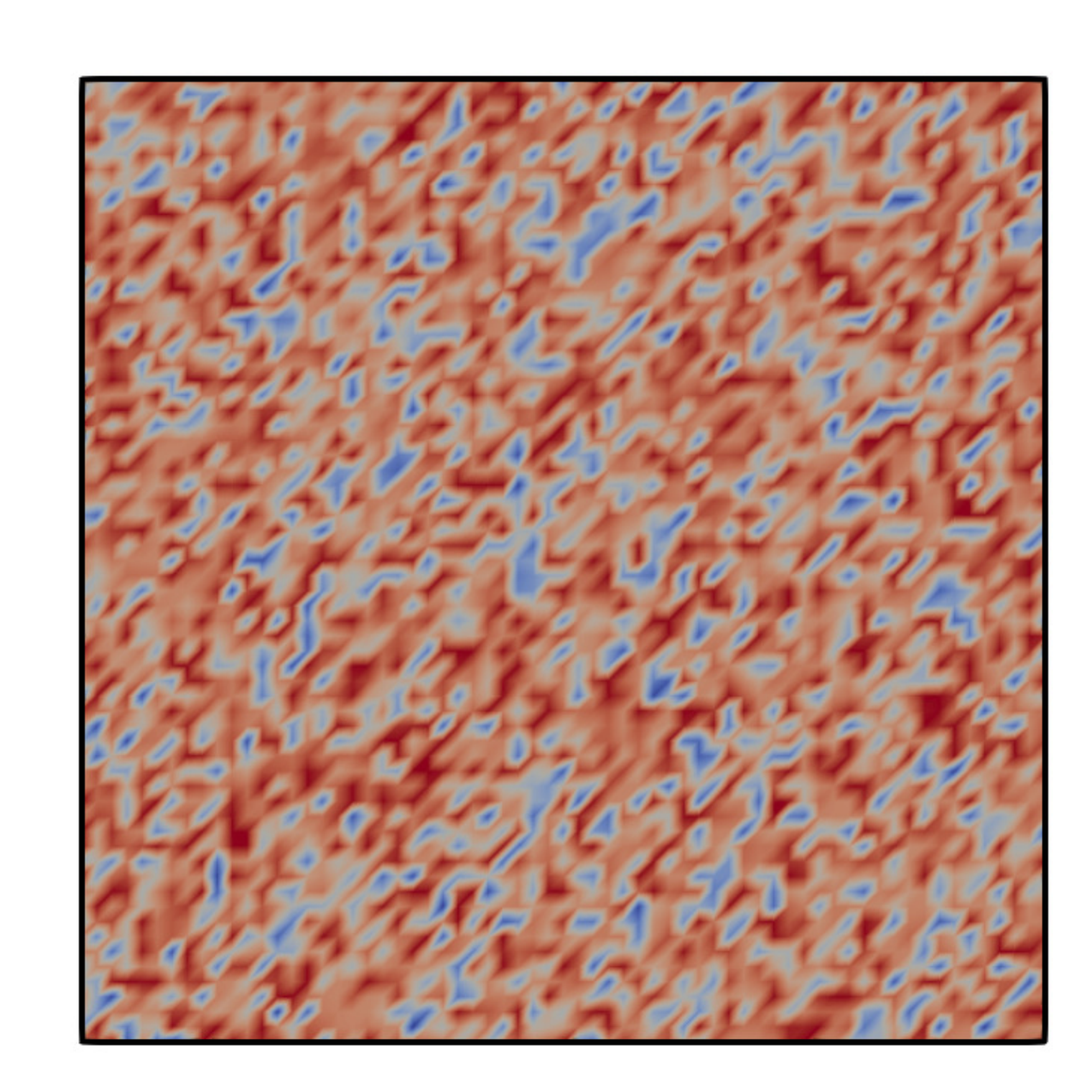} 
			\caption*{$t=0$}
		\end{minipage}
		\begin{minipage}[t]{0.24\linewidth}
			\centering
			\includegraphics[width=3.1cm]{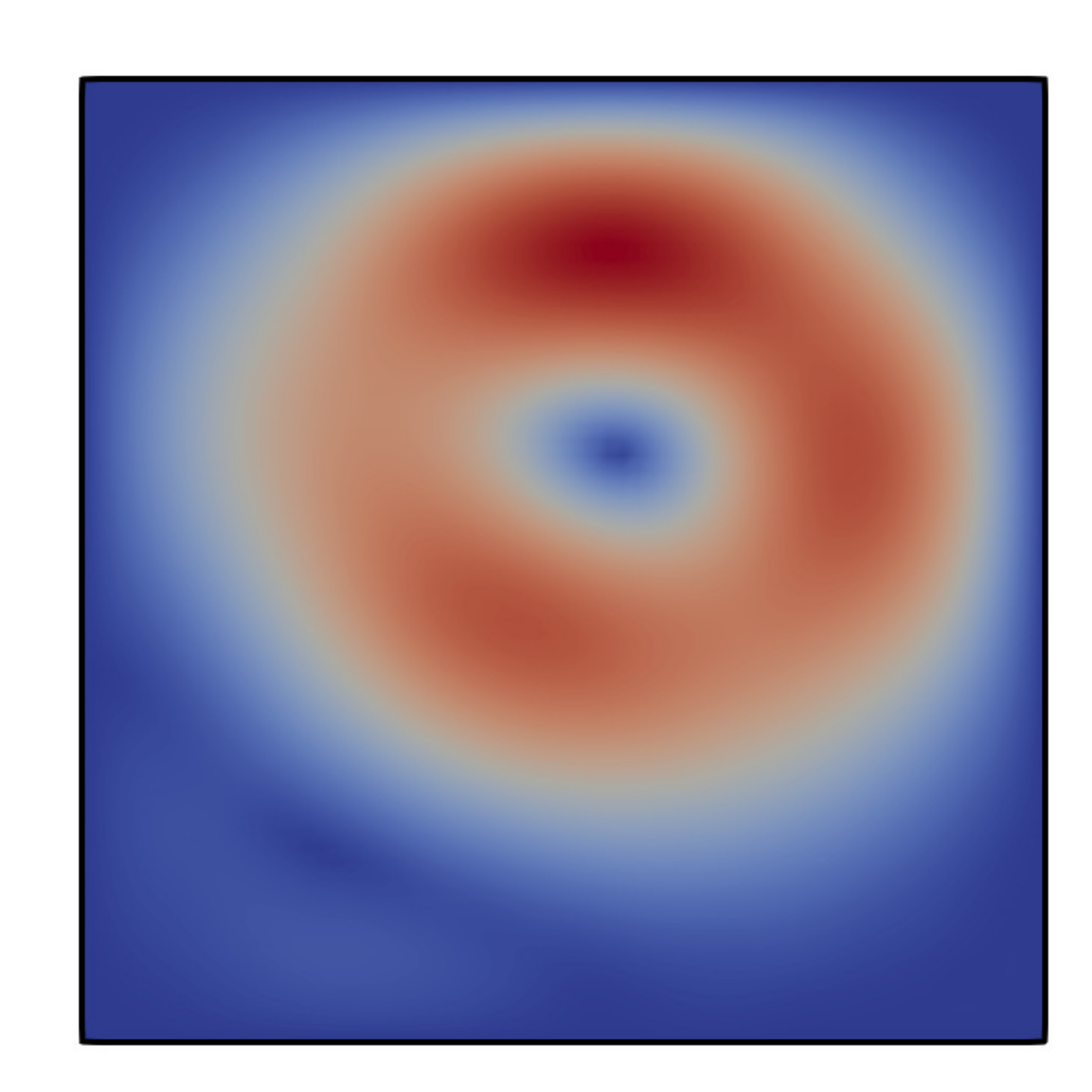} 
			\caption*{$t=0.3$}
		\end{minipage}
		\begin{minipage}[t]{0.24\linewidth}
			\centering
			\includegraphics[width=3.1cm]{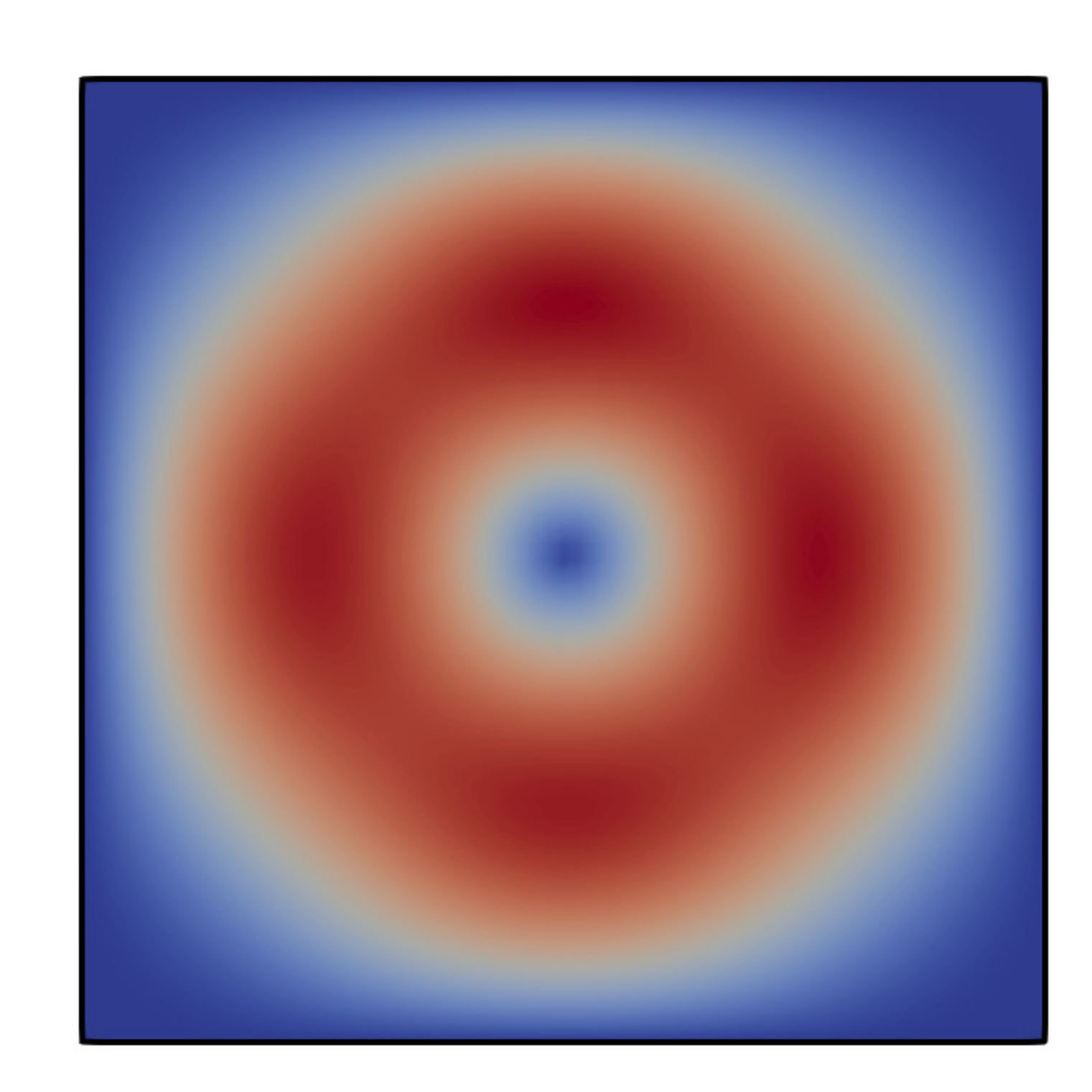} 
			\caption*{$t=1$}
		\end{minipage} 
		\begin{minipage}[t]{0.24\linewidth}
			\centering
			\includegraphics[width=3.1cm]{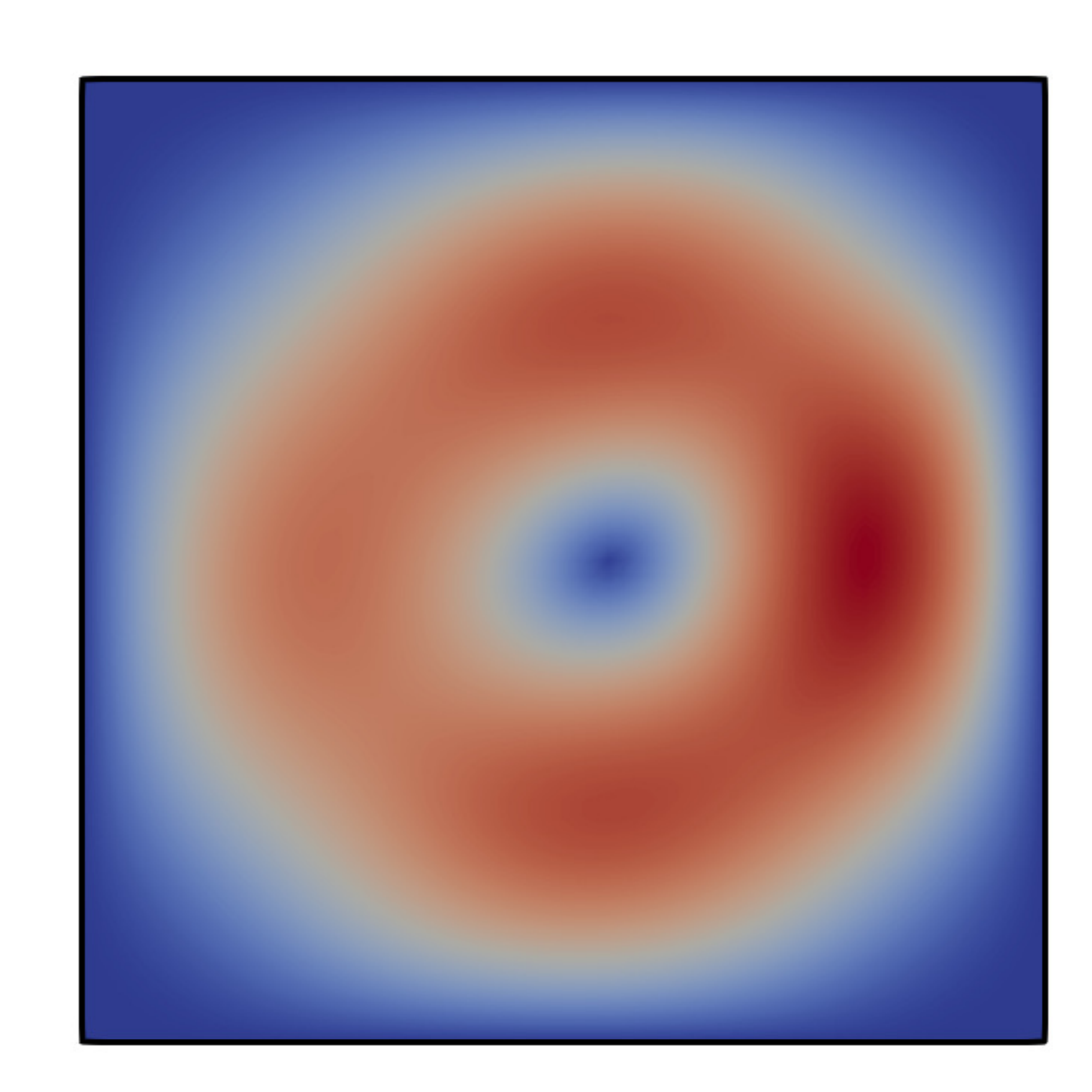} 
			\caption*{$t=2$}
		\end{minipage}\\
		\vspace{-0.1cm}
		\begin{minipage}[t]{0.24\linewidth}
			\centering
			\includegraphics[width=3.1cm]{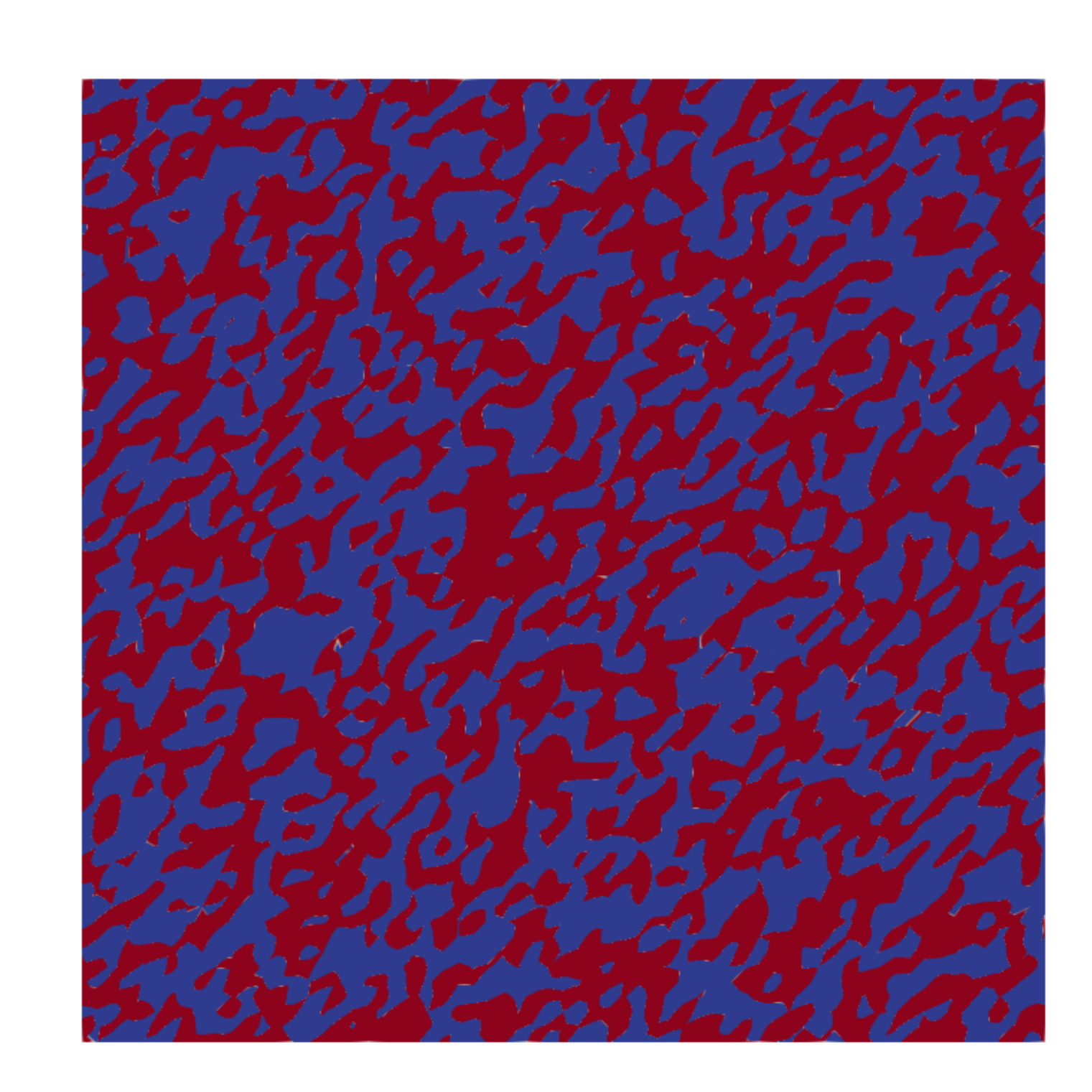} 
			\caption*{$t=0$}
		\end{minipage}
		\begin{minipage}[t]{0.24\linewidth}
			\centering
			\includegraphics[width=3.1cm]{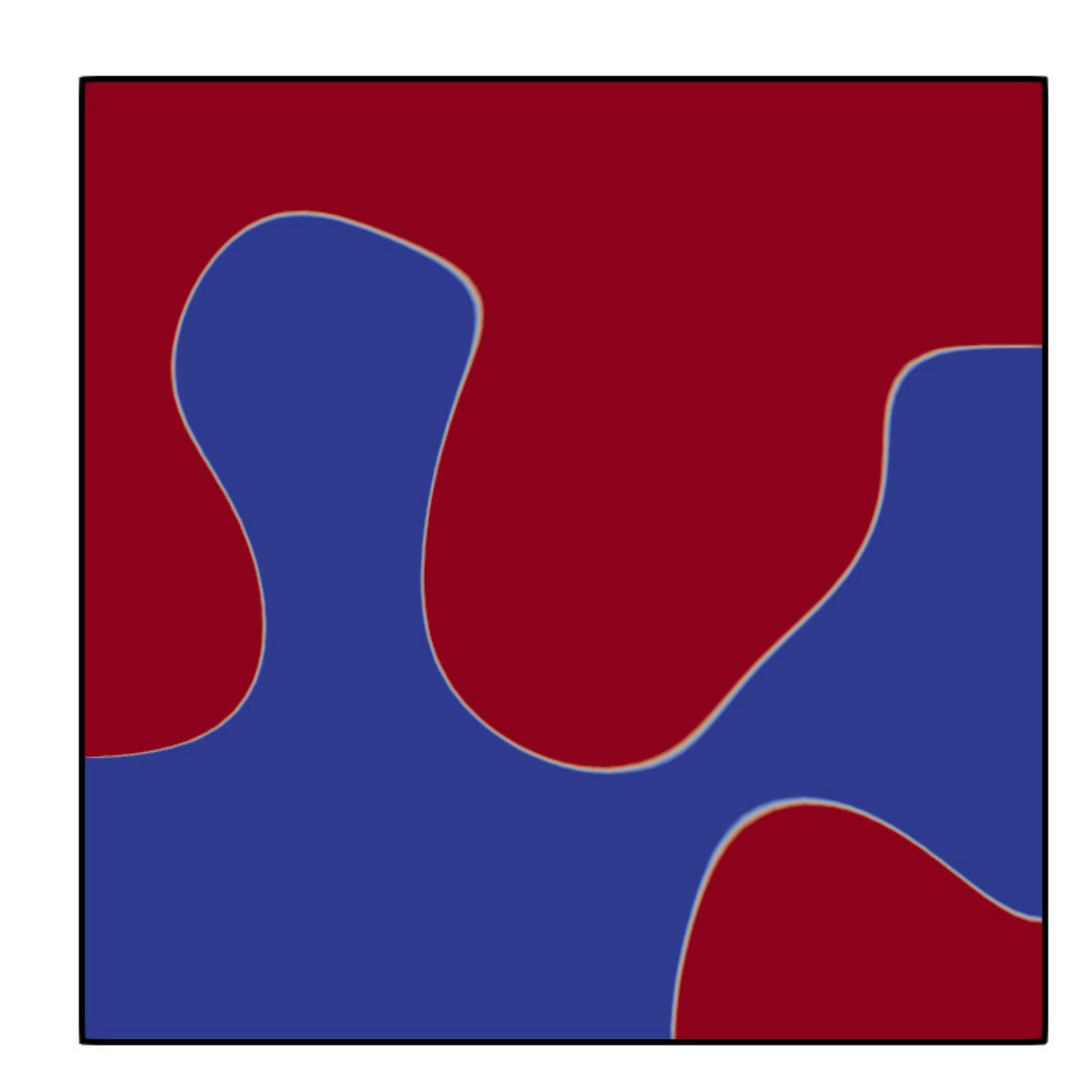} 
			\caption*{$t=0.3$}
		\end{minipage}
		\begin{minipage}[t]{0.24\linewidth}
			\centering
			\includegraphics[width=3.1cm]{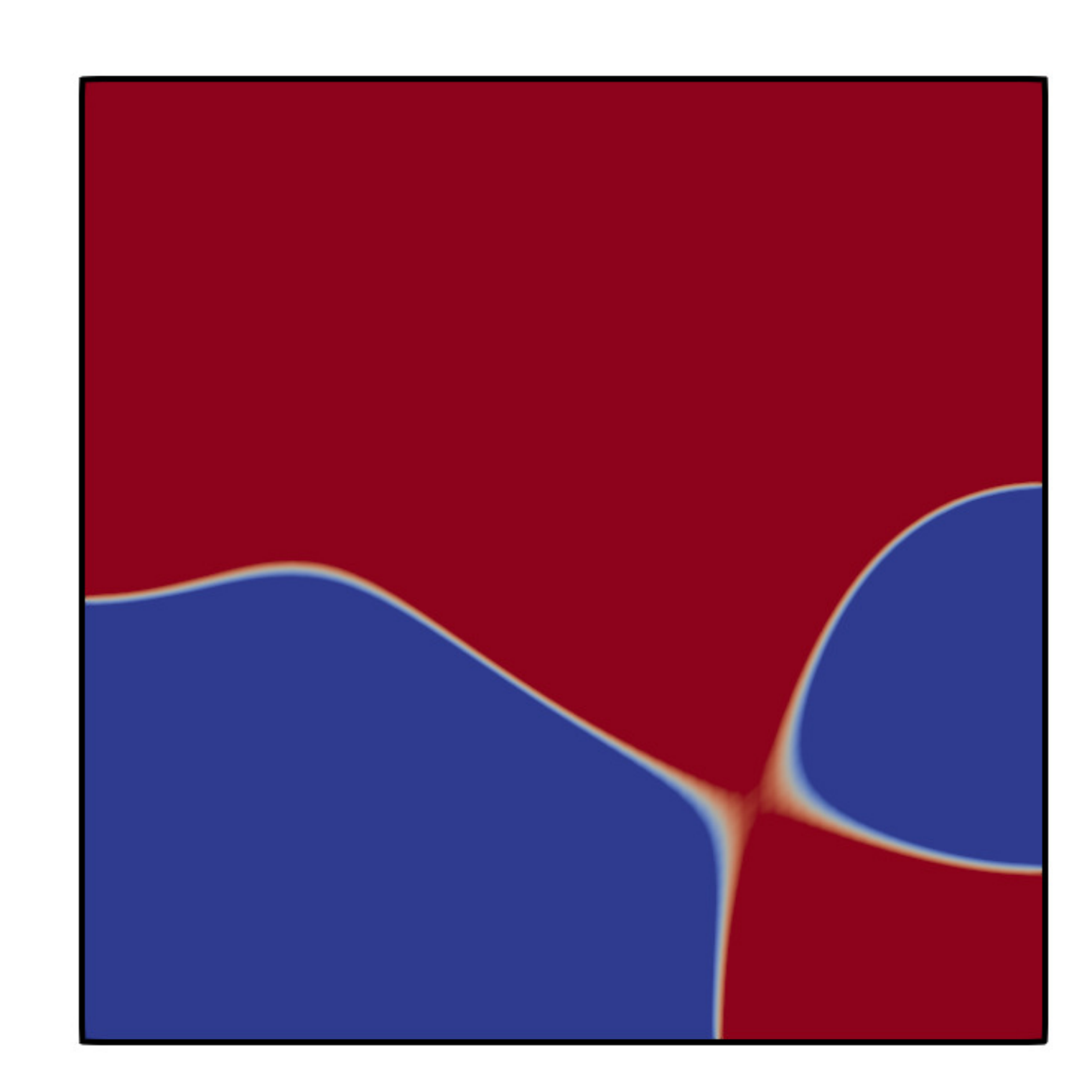} 
			\caption*{$t=1$}
		\end{minipage} 
		\begin{minipage}[t]{0.24\linewidth}
			\centering
			\includegraphics[width=3.1cm]{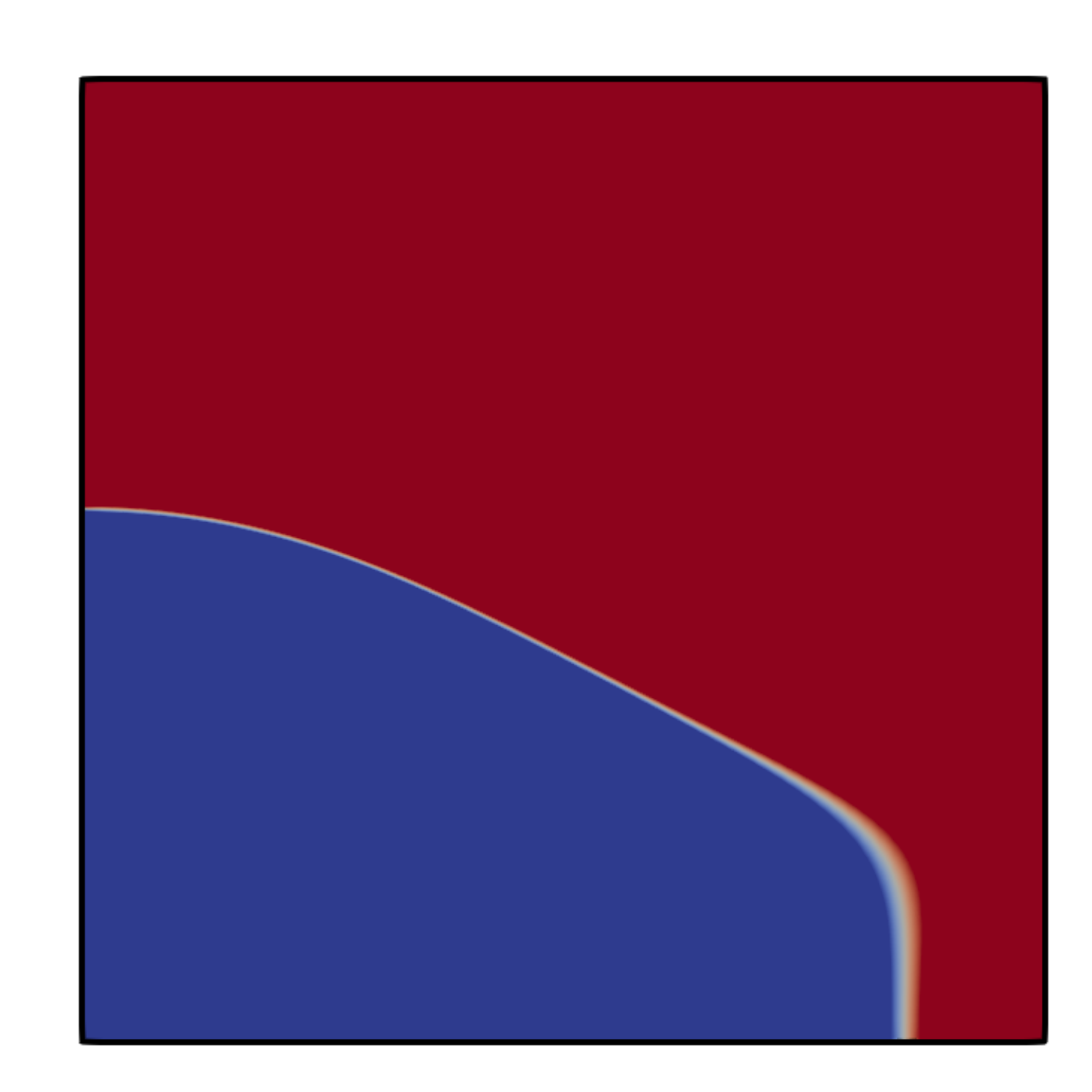} 
			\caption*{$t=2$}
		\end{minipage}
		\vspace{-0.5cm}
		\caption{Time evolution of phase-Field spinodal decomposition and fluid self-organization in active fluid: the velocity field (top row), magnitude of the velocity field (middle row), and the phase field (bottom row) at selected time instances.}
		\label{fig:Phase-field spinodal decomposition and self-organization in active fluid}
	\end{figure}

	\begin{figure}[htbp]
		\centering
		\includegraphics[width=0.7\textwidth]{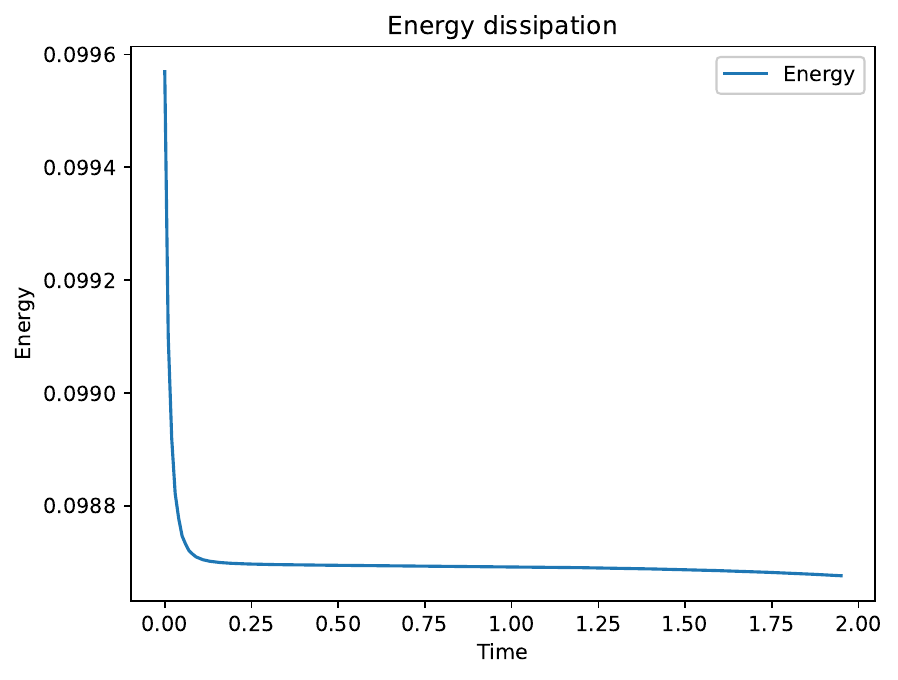}
		\caption{Energy dissipation in the phase-field spinodal decomposition and fluid self-organization. }
		\label{fig:Energy dissipation}
	\end{figure}

	\subsection{Phase-field shape relaxation and fluid self-organization in active fluid}
	\label{Phase-field shape relaxation and fluid self-organization in active fluid}
	In this subsection, we make use of \Cref{fully discrete formulations scheme} to simulate the merging process of two circular bubbles within the context of the Allen-Cahn active fluid system. 
	The computational domain is \( D = [0, 1.5]^2 \), and the initial condition for the phase field is 
	\[
	\phi_0(x,y) = 1 + \sum_{i=1}^{2} \tanh \Big( \frac{r_i - \sqrt{(x-x_i)^2+(y-y_i)^2}}{0.02} \Big),
	\]
	with two circular interfaces centered at $(x_1,y_1)=(0.5,0.75)$, $(x_2,y_2)=(1,0.75)$, respectively, and radius $r_1=r_2=0.25$. 
	The velocity is initialized by 
	\[
	u_0(x,y) = \big( \text{rand}_2(x,y), \, \text{rand}_2(x,y) \big), \qquad (x,y)\in D,
	\]
	where $\text{rand}_2(x,y)$ is the same random variable as in Subsection \ref{Phase-field spinodal decomposition and fluid self-organization in active fluid}. 
	The boundary conditions are: $u|_{\partial D} = \Delta u|_{\partial D} = 0$ and $\partial_n \phi = \partial_n m = 0$ on $\partial D$. 
	The parameters are set to be $\mu = \nu = \beta = \alpha = \gamma = 1$, $\sigma = 10$, $\kappa = 0.01$. 
	The simulation is carried out over the time interval $[0,0.4]$ with the constant time step size $\Delta t = 0.01$ and the uniform mesh size $h = \frac{1}{64}$. 

	Figure~\ref{fig:Phase-field shape relaxation and self-organization in active fluid} shows the evolution of the velocity vector field, velocity magnitude, and phase field at selected time instances. 
	At \( t = 0 \), two distinct phase-field bubbles are already present, while the velocity field exhibits random perturbations. 
	As time progresses to \( t = 0.1 \), the bubbles move closer together, and small vortical structures begin to emerge in the velocity field. 
	By \( t = 0.2 \), the bubbles continue to merge, and the vortices in the velocity field become more noticeable. 
	At \( t = 0.4 \), the two bubbles have fully merged into a single structure. In contrast, the velocity field has formed four distinct vortices arranged in a symmetric pattern, with each vortex rotating around a central point, which creates a well-organized flow structure. 
	The evolution process illustrates that \Cref{fully discrete formulations scheme} accurately captures the dynamics of the phase field
 	and velocity vortices in the active fluid system of self-organization.

	\begin{figure}[htbp] 
		\centering
		\begin{minipage}[t]{0.24\linewidth}
			\centering
			\includegraphics[width=3.2cm]{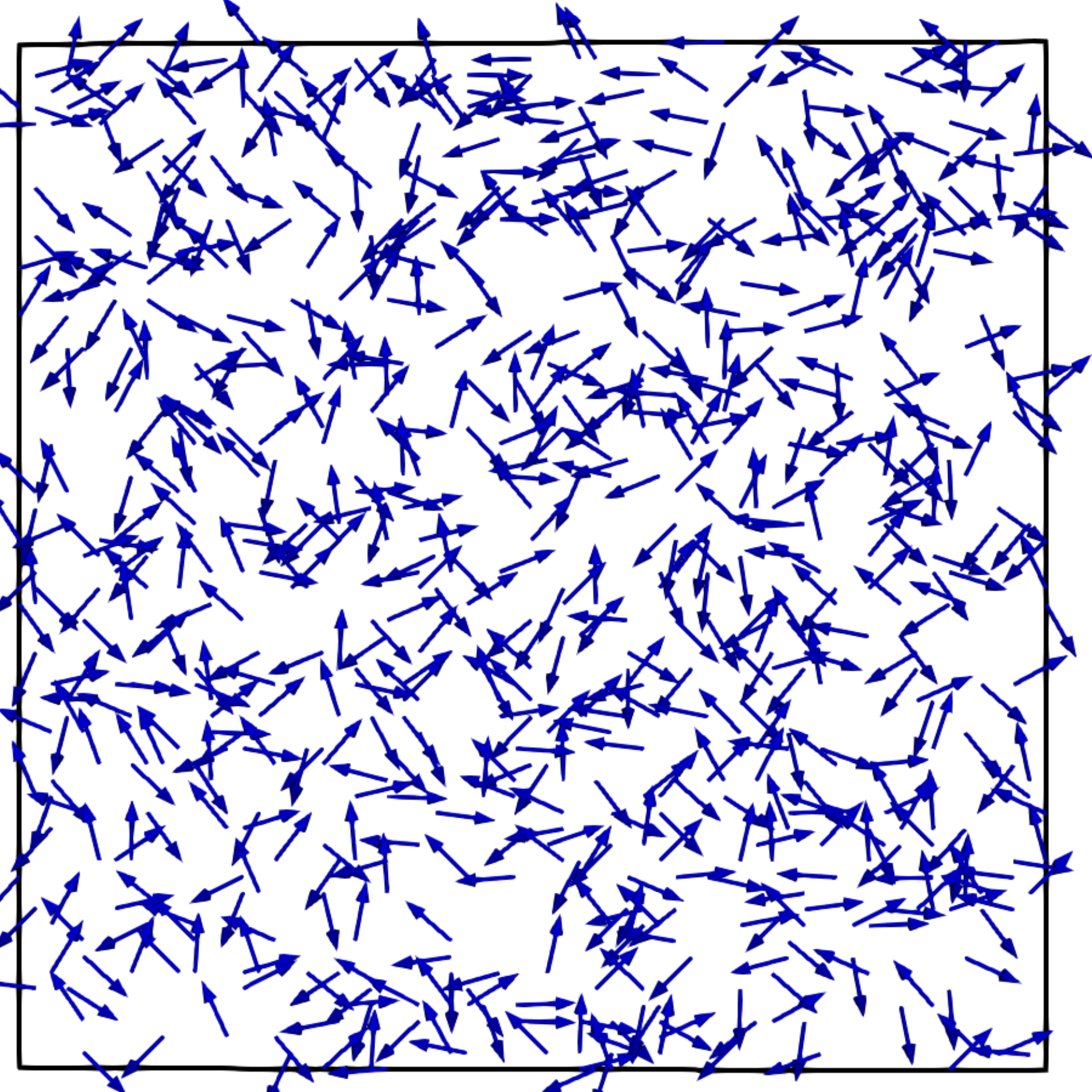} 
			\caption*{$t=0$}
		\end{minipage}
		\begin{minipage}[t]{0.24\linewidth}
			\centering
			\includegraphics[width=3.2cm]{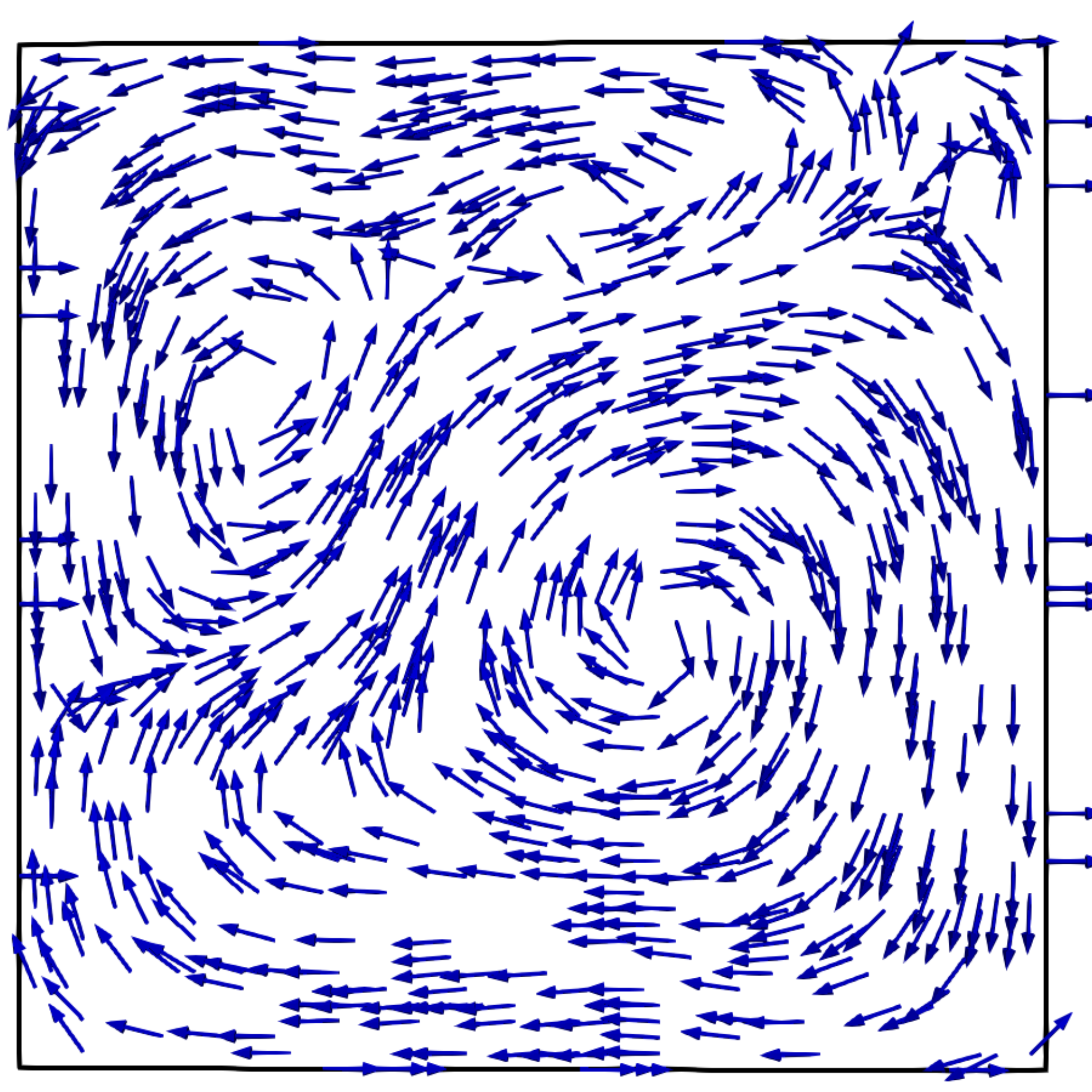} 
			\caption*{$t=0.1$}
		\end{minipage}
		\begin{minipage}[t]{0.24\linewidth}
			\centering
			\includegraphics[width=3.2cm]{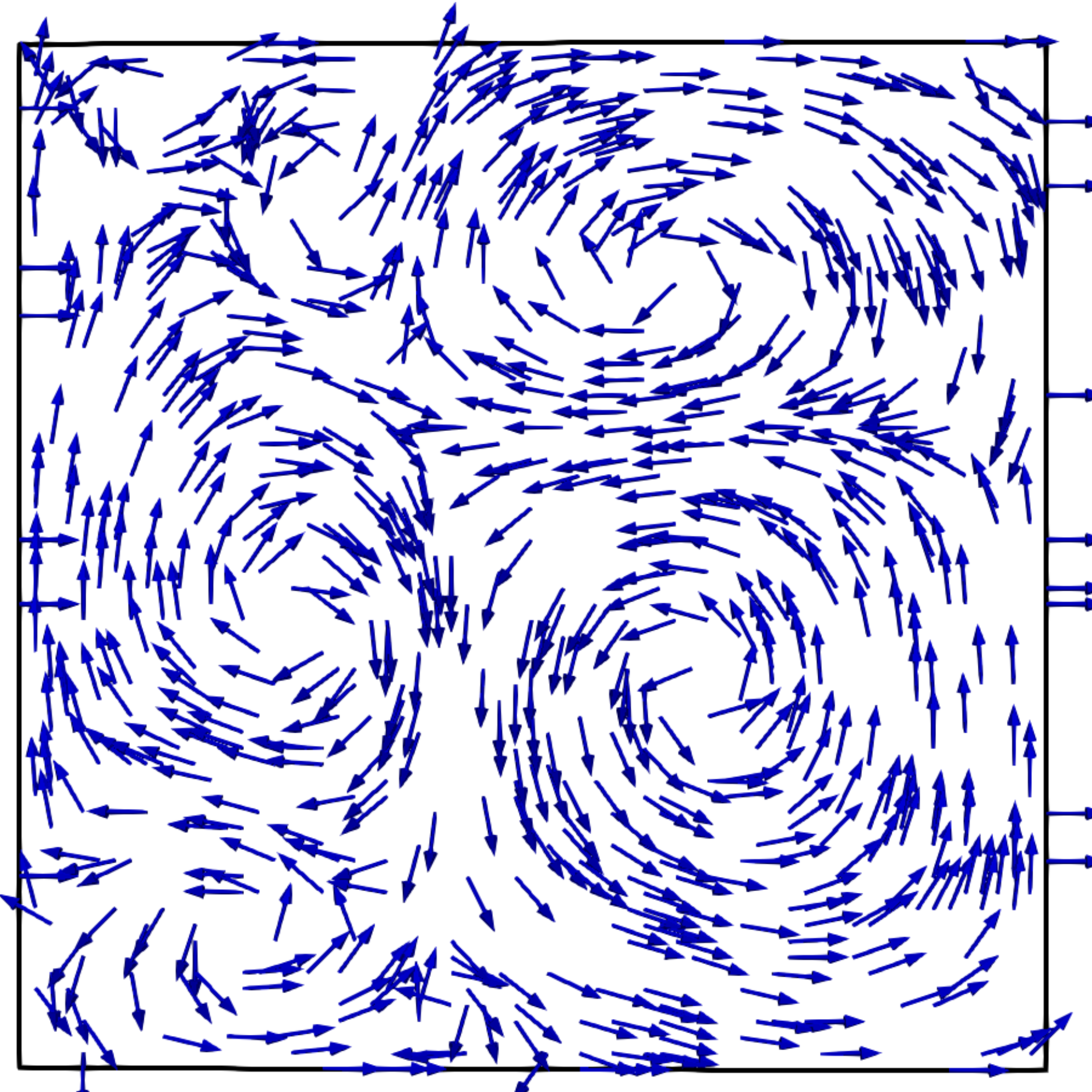} 
			\caption*{$t=0.2$}
		\end{minipage} 
		\begin{minipage}[t]{0.24\linewidth}
			\centering
			\includegraphics[width=3.2cm]{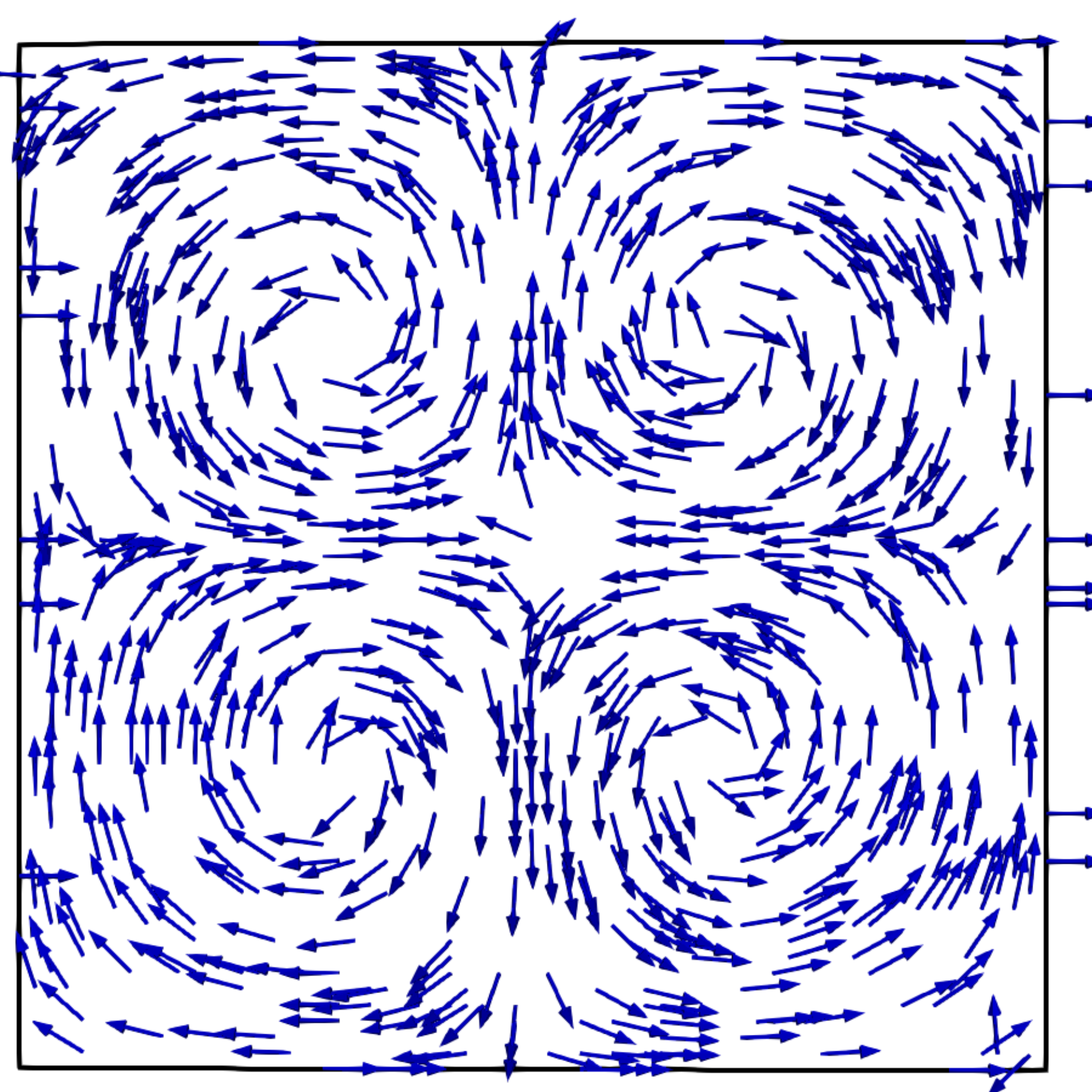} 
			\caption*{$t=0.4$}
		\end{minipage} \\
		\vspace{-0.1cm}
		\begin{minipage}[t]{0.24\linewidth}
			\centering
			\includegraphics[width=3.1cm]{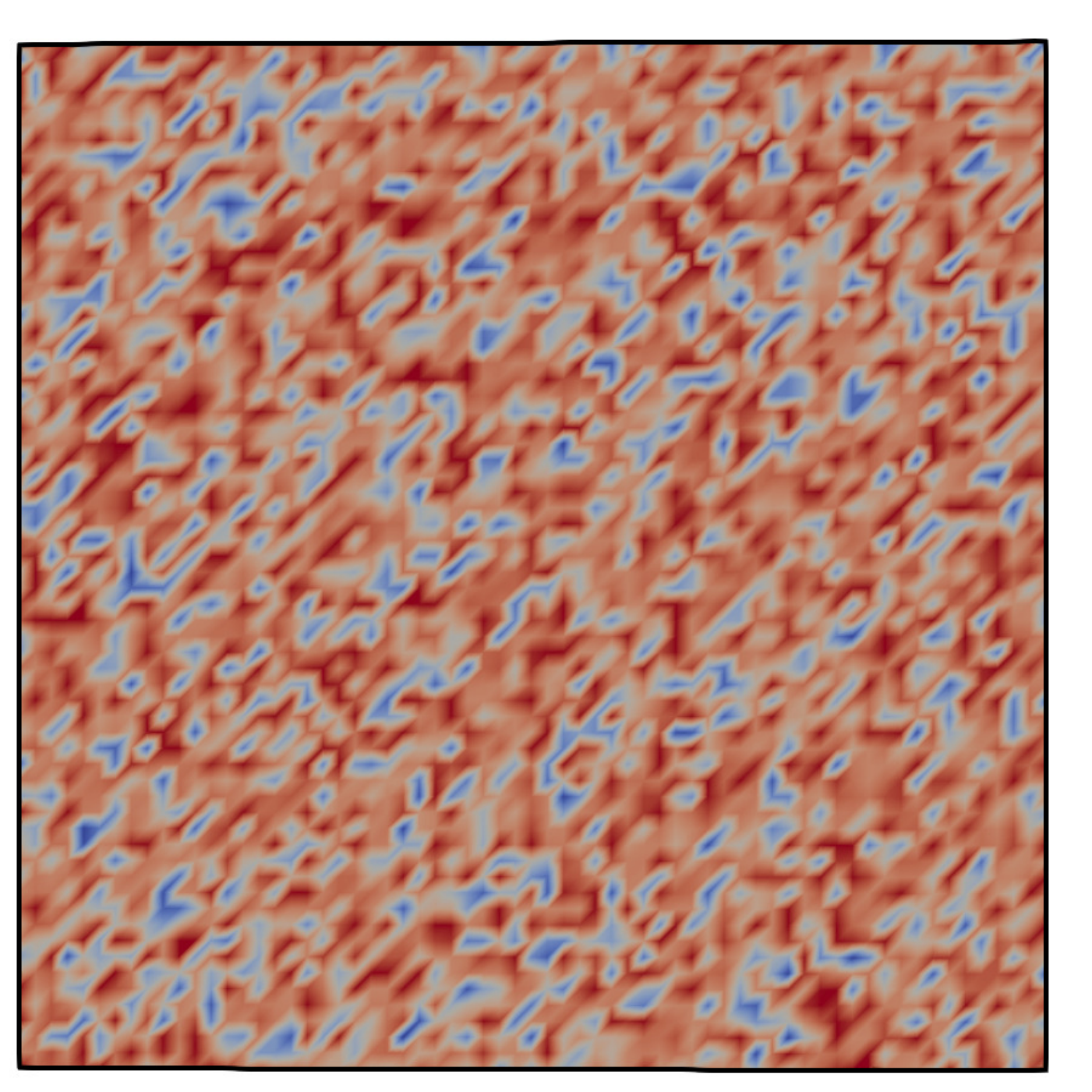} 
			\caption*{$t=0$}
		\end{minipage}
		\begin{minipage}[t]{0.24\linewidth}
			\centering
			\includegraphics[width=3.1cm]{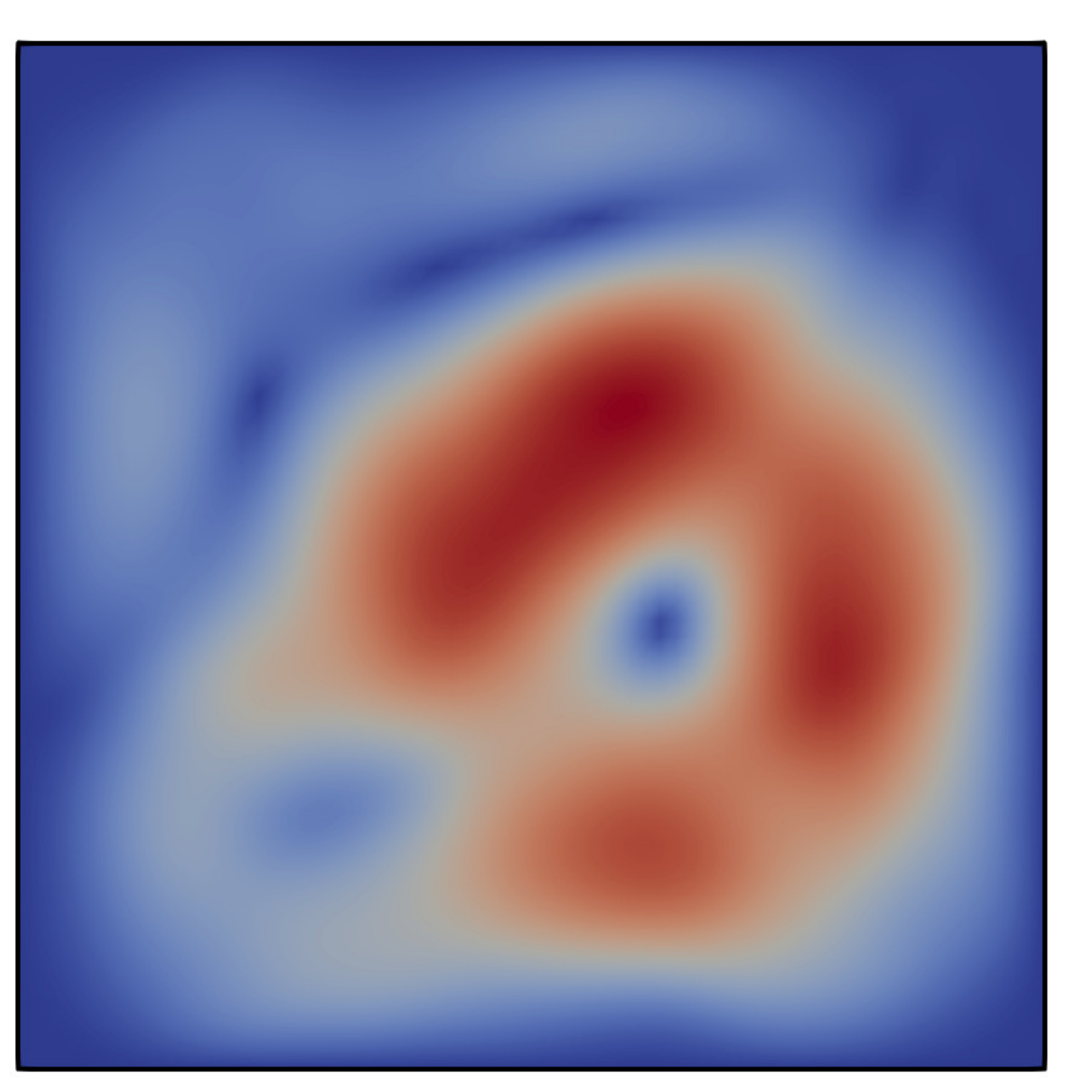} 
			\caption*{$t=0.1$}
		\end{minipage}
		\begin{minipage}[t]{0.24\linewidth}
			\centering
			\includegraphics[width=3.1cm]{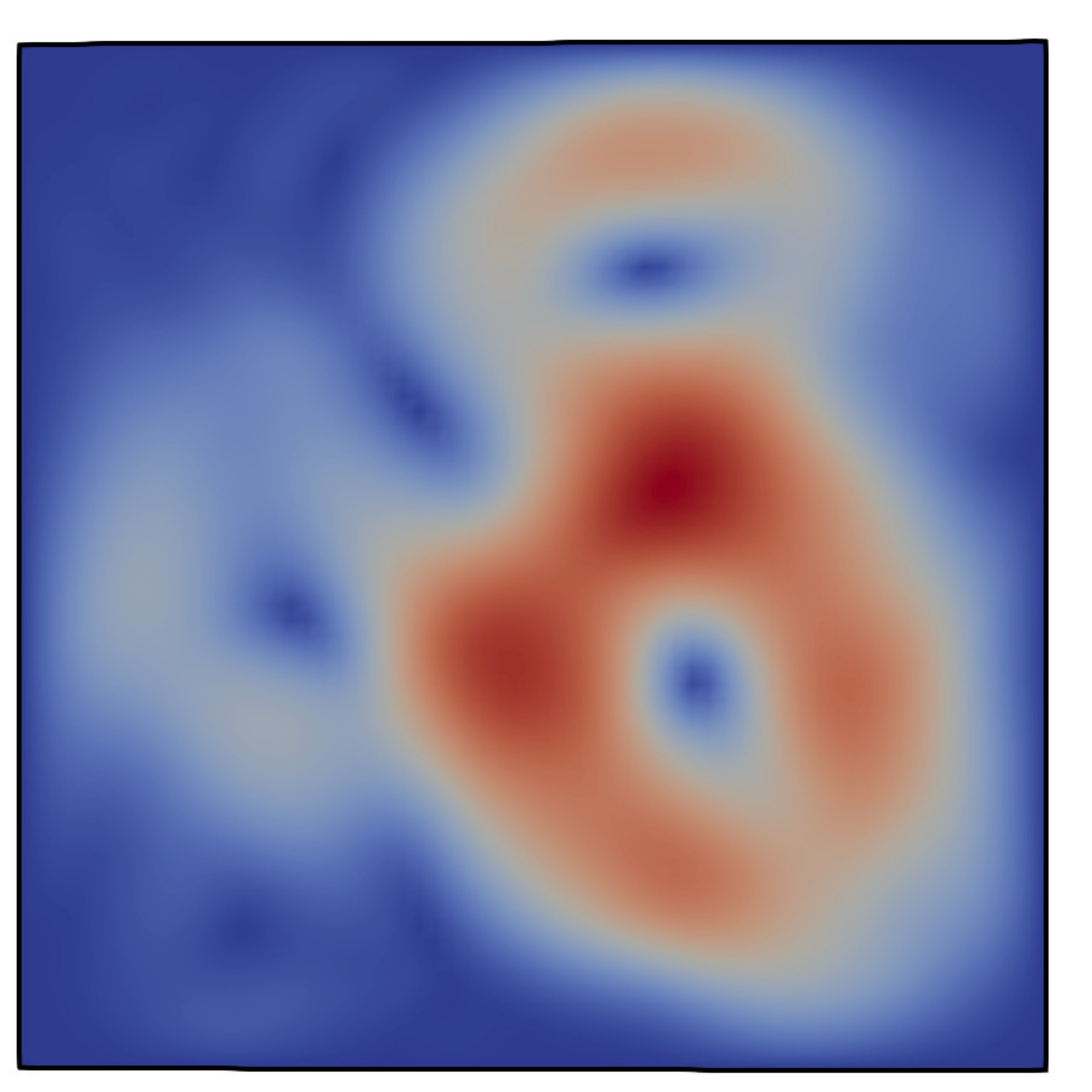} 
			\caption*{$t=0.2$}
		\end{minipage} 
		\begin{minipage}[t]{0.24\linewidth}
			\centering
			\includegraphics[width=3.1cm]{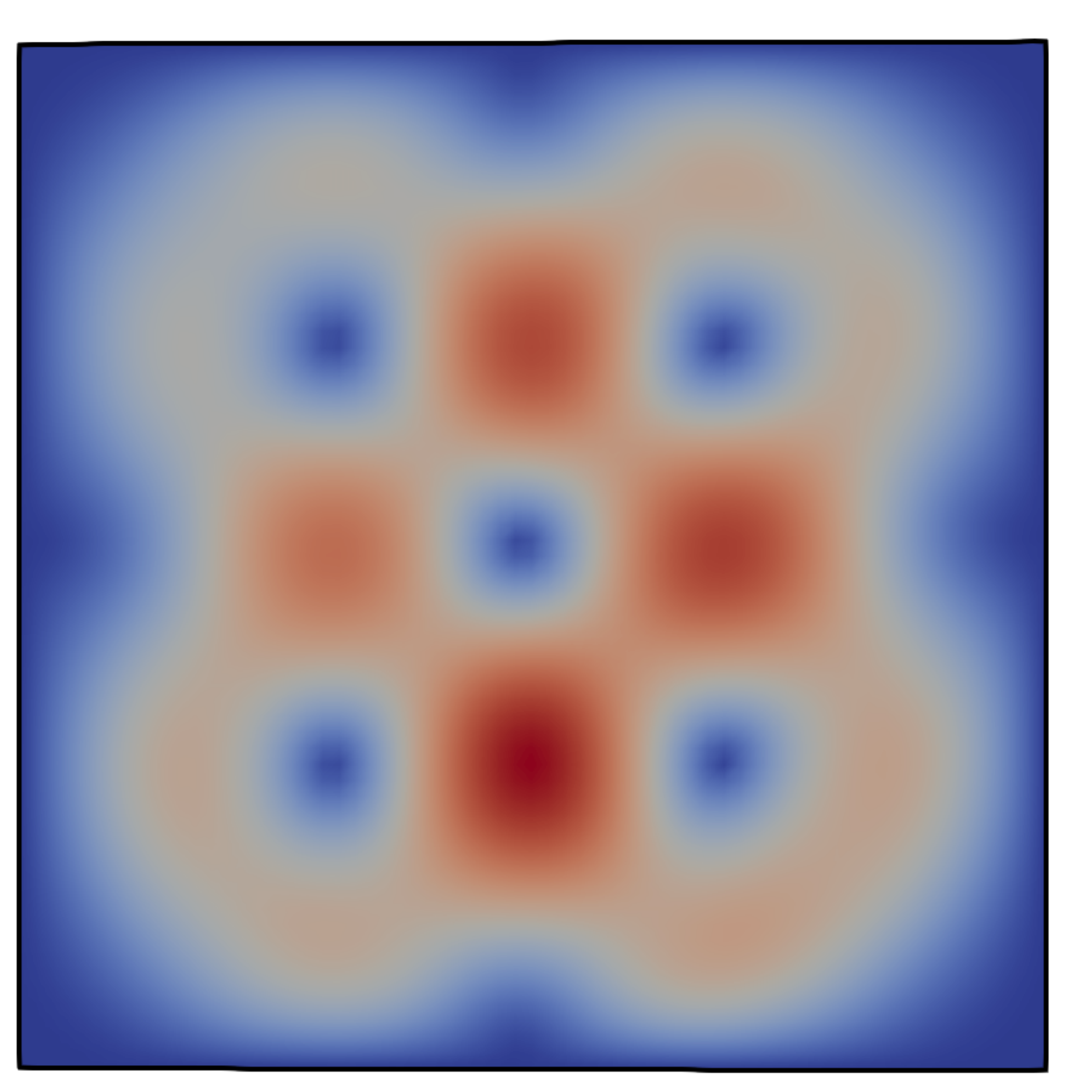} 
			\caption*{$t=0.4$}
		\end{minipage}\\
		\vspace{-0.1cm}
		\begin{minipage}[t]{0.24\linewidth}
			\centering
			\includegraphics[width=3.1cm]{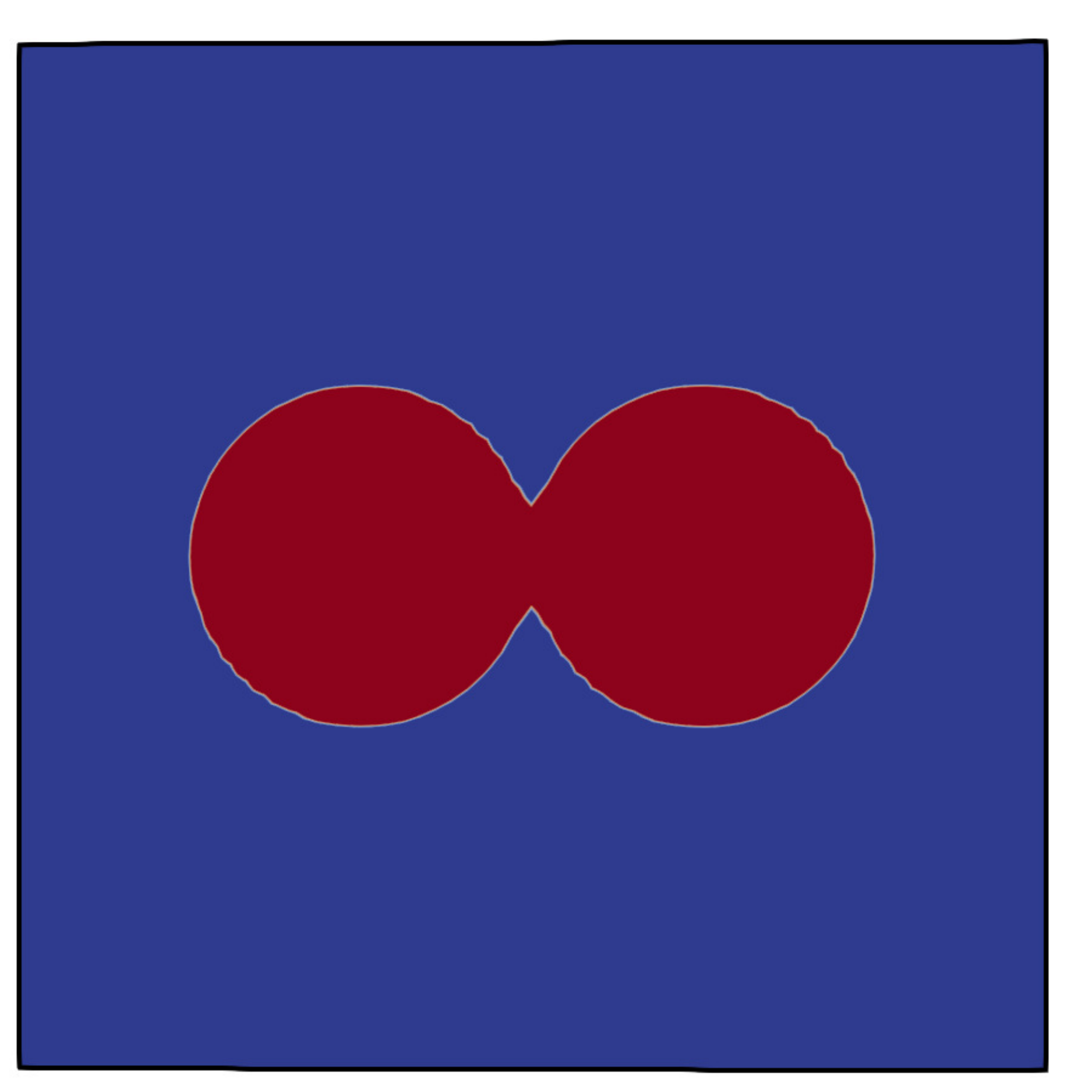} 
			\caption*{$t=0$}
		\end{minipage}
		\begin{minipage}[t]{0.24\linewidth}
			\centering
			\includegraphics[width=3.1cm]{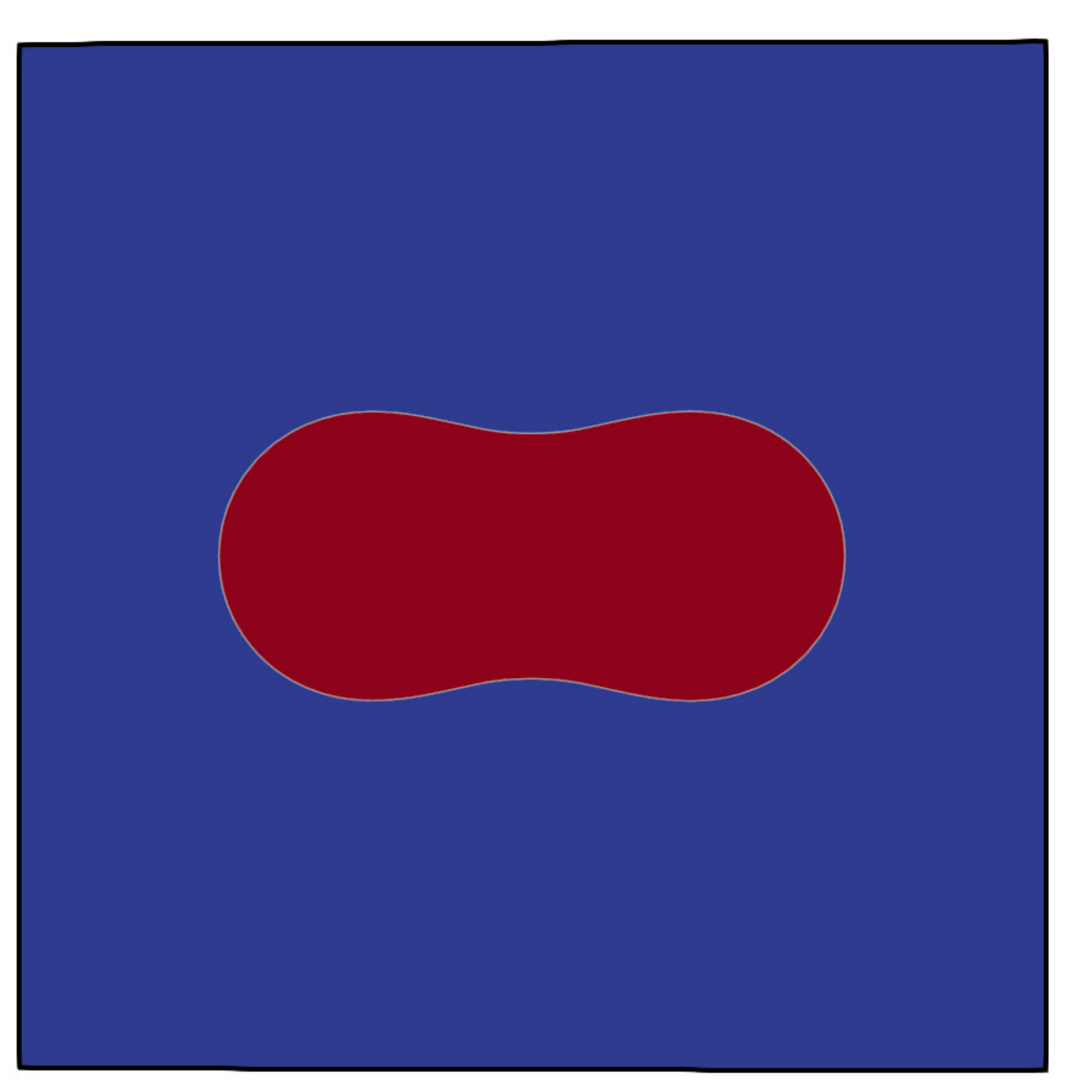} 
			\caption*{$t=0.1$}
		\end{minipage}
		\begin{minipage}[t]{0.24\linewidth}
			\centering
			\includegraphics[width=3.1cm]{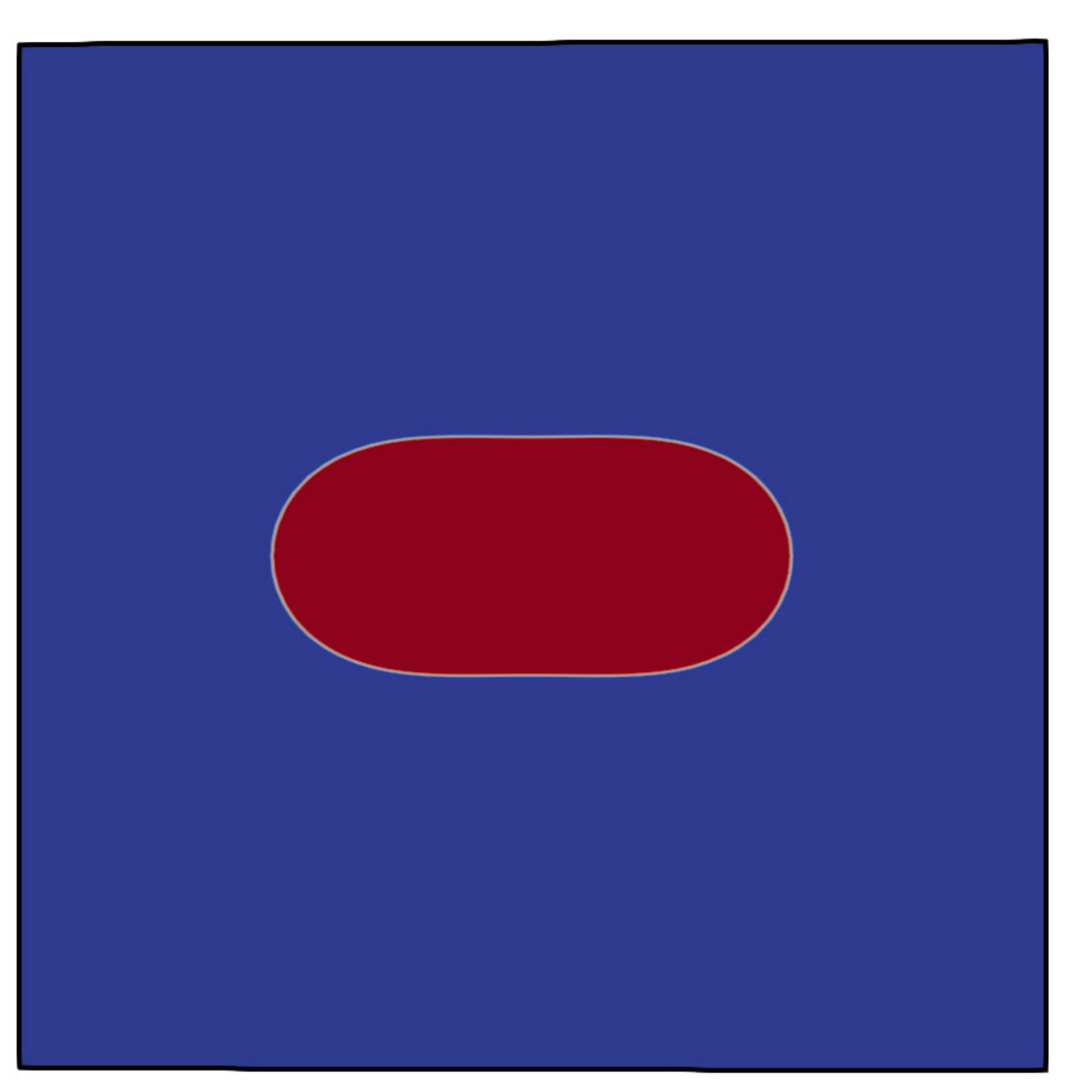} 
			\caption*{$t=0.2$}
		\end{minipage} 
		\begin{minipage}[t]{0.24\linewidth}
			\centering
			\includegraphics[width=3.1cm]{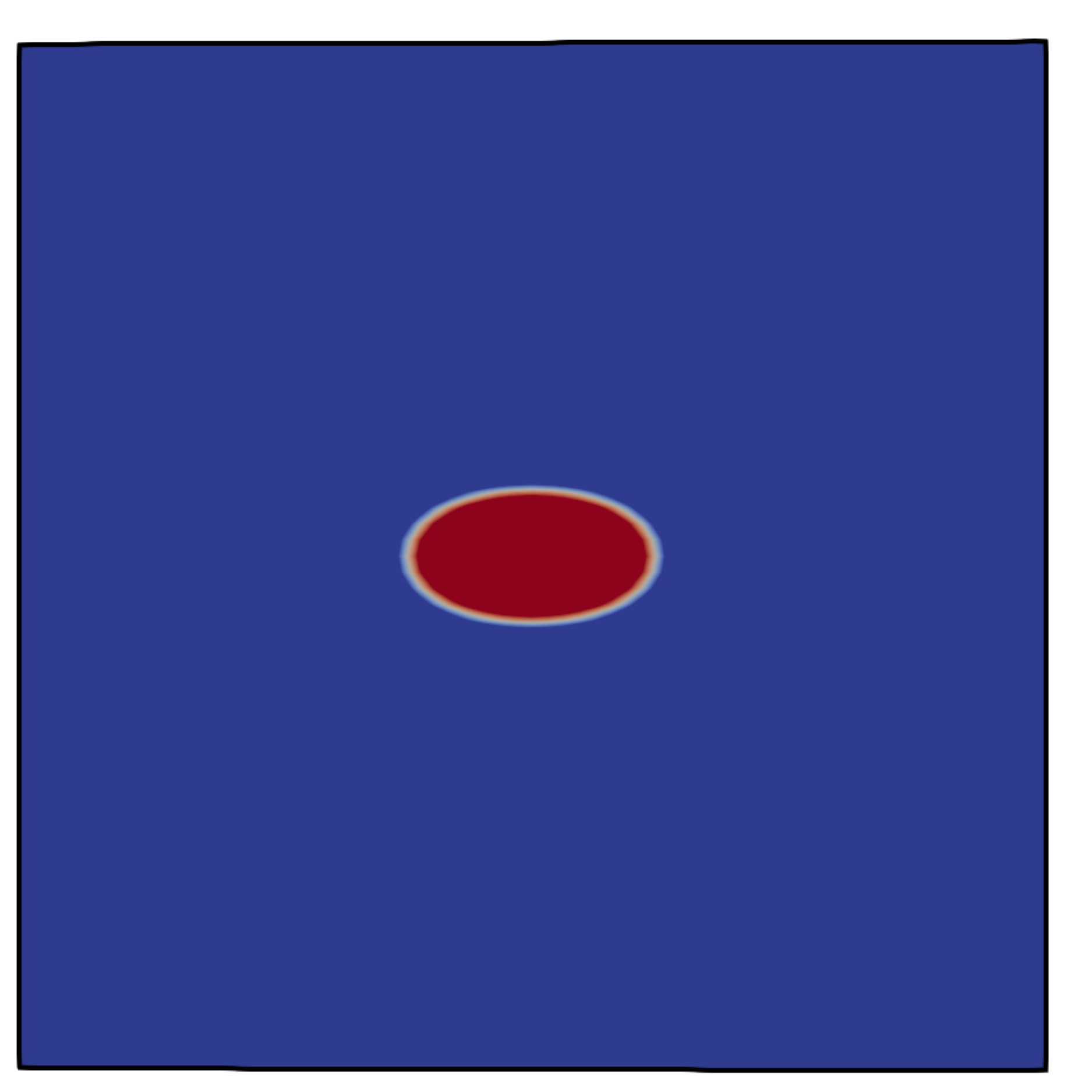} 
			\caption*{$t=0.4$}
		\end{minipage}
		\vspace{-0.5cm}
		\caption{Time evolution of phase-Field shape relaxation and fluid self-organization in active fluid: the vector field of velocity (top row), velocity magnitude (middle row), and phase field (bottom row) at selected time instances.}
		\label{fig:Phase-field shape relaxation and self-organization in active fluid}
	\end{figure}

	\subsection{Phase-field dynamics of circular bubble shrinking and fluid self-organization in active fluid}
	\label{Phase-field dynamics of circular bubble shrinking and fluid self-organization in active fluid}
	In this subsection, we simulate the shrinking process of a circular bubble within the context of the Allen-Cahn active fluid system by \Cref{fully discrete formulations scheme}. 
	The computational domain is \( D = [0, 2\pi]^2 \), and the initial condition for the phase field is
	\[
	\phi_0(x,y) = 1 + \sum_{i=1}^{2} \tanh \Big( \frac{r_i - \sqrt{(x - x_i)^2 + (y - y_i)^2}}{0.06} \Big),
	\]
	with \( r_1 = 1.4 \), \( r_2 = 0.5 \), \( x_1 = \pi - 0.8 \), \( x_2 = \pi + 1.7 \), and \( y_1 = y_2 = \pi \). 
	The initial velocity field \( u_0(x, y) \) is randomly initialized as
	\[
	u_0(x, y) = \left( \text{rand}_2(x, y), \text{rand}_2(x, y) \right), \qquad (x, y) \in D,
	\]
	where $\text{rand}_2(x,y)$ is the same random random variable as in Subsetion \ref{Phase-field spinodal decomposition and fluid self-organization in active fluid} and \ref{Phase-field shape relaxation and fluid self-organization in active fluid}.
	The boundary conditions are: $u|_{\partial D} = \Delta u|_{\partial D} = 0$ and $\partial_n \phi = \partial_n m = 0$ on $\partial D$. 
	The parameters in this experiment are set to be $\mu = \nu = \beta = \alpha = \gamma = 1$, $\sigma = 10$, and $\kappa = 0.01$. 
	The simulation is carried out over the time interval $[0, 1.2]$
	with the constant time step size $\Delta t = 0.01$ and uniform mesh size $h = \frac{1}{64}$.

	Figure~\ref{fig:Phase-field dynamics of circular bubble shrinking and fluid self-organization in active fluid} shows the evolution of the velocity vector field (top row), velocity magnitude (middle row), and phase field (bottom row) at selected time instances. 
	At \( t = 0 \), two distinct phase-field bubbles are present, 
	while the velocity field exhibits random perturbations. 
	As time progresses to \( t = 0.5 \), the smaller bubble is gradually absorbed by the larger one due to the influence of roughening. 
	Small vortical structures begin to emerge in the velocity field, with the flow becoming more organized. 
	By \( t = 0.8 \), the smaller bubble's volume has significantly decreased, and the velocity field has formed a more coherent pattern. At the final time \( t = 1.2 \), the smaller circle has been completely absorbed into the larger circle; meanwhile, the velocity field has organized into four distinct vortices, with each rotating around a central point.
	The results demonstrate the self-organization of the active fluid system, precisely capturing both the phase-field shrinking dynamics and the velocity vortices through \Cref{fully discrete formulations scheme}.

	\begin{figure}[htbp] 
		\centering
		\begin{minipage}[t]{0.24\linewidth}
			\centering
			\includegraphics[width=3.2cm]{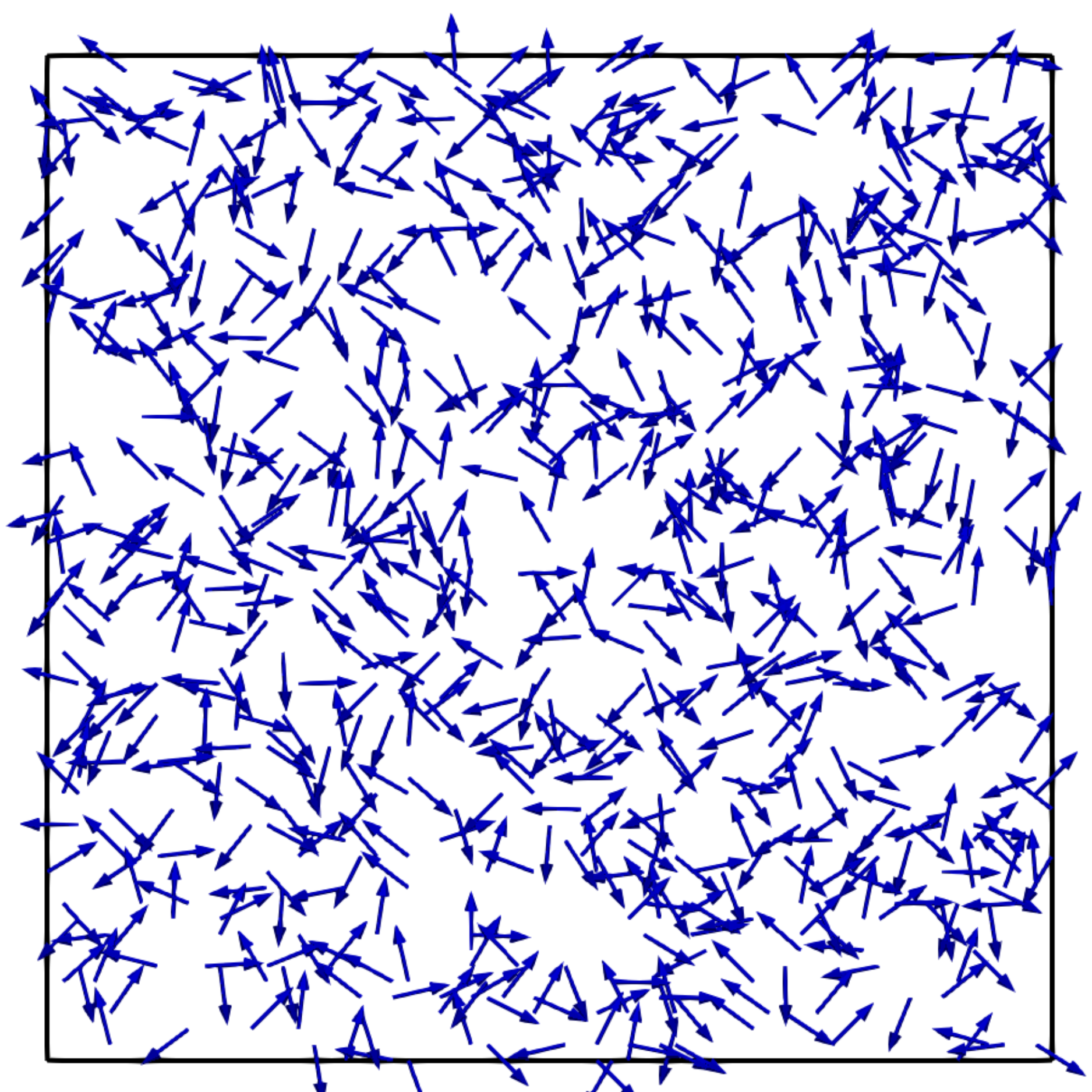} 
			\caption*{$t=0$}
		\end{minipage}
		\begin{minipage}[t]{0.24\linewidth}
			\centering
			\includegraphics[width=3.2cm]{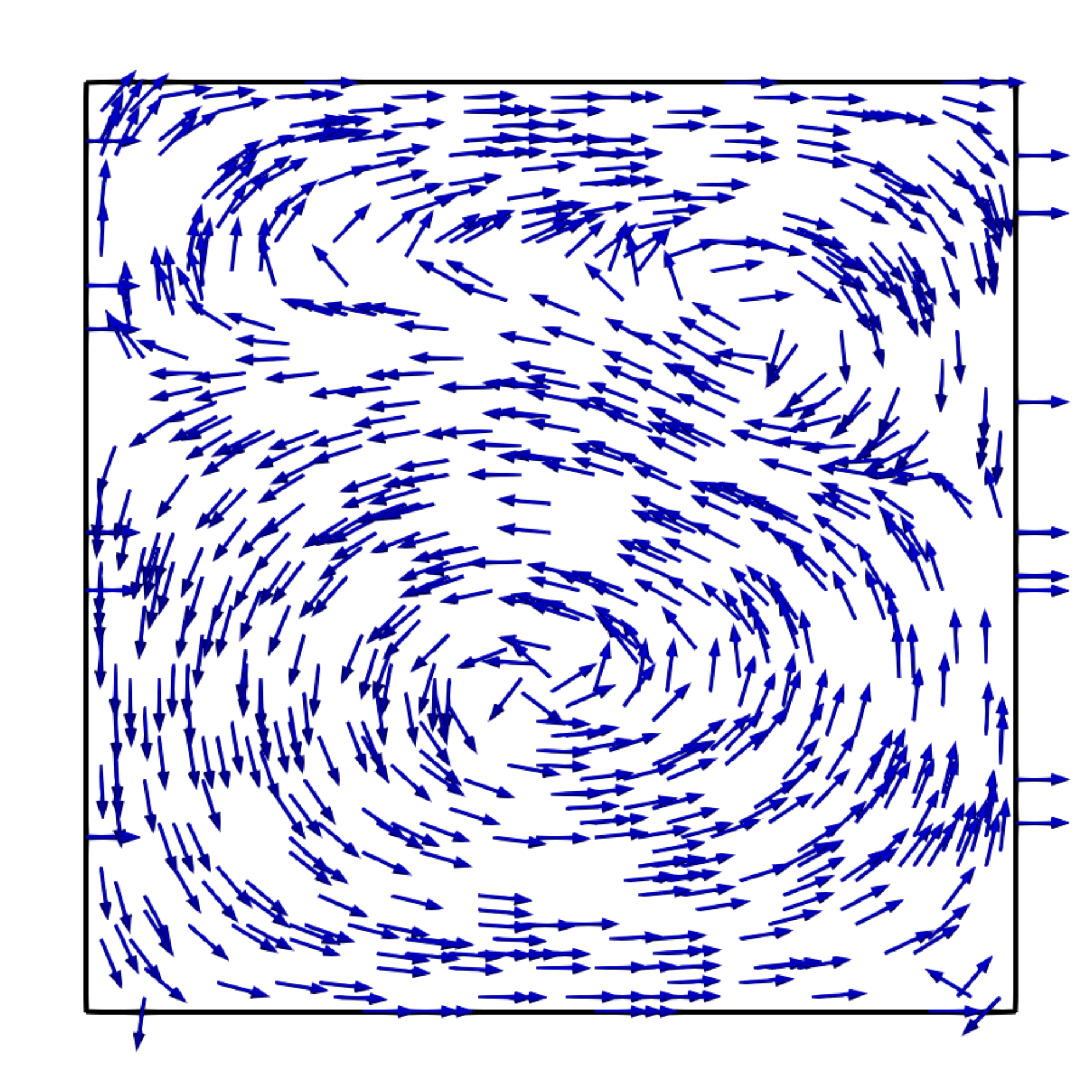} 
			\caption*{$t=0.5$}
		\end{minipage}
		\begin{minipage}[t]{0.24\linewidth}
			\centering
			\includegraphics[width=3.2cm]{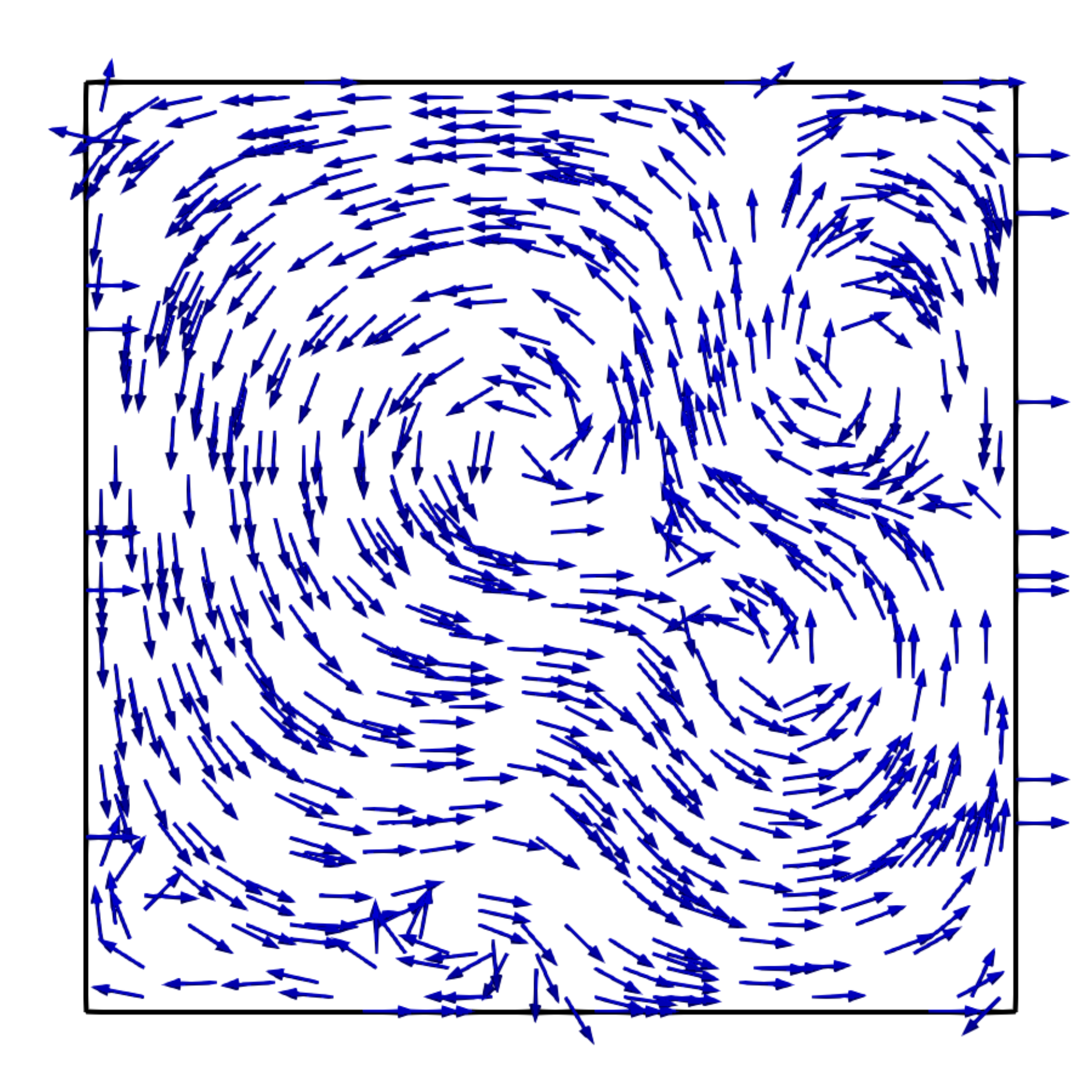} 
			\caption*{$t=0.8$}
		\end{minipage} 
		\begin{minipage}[t]{0.24\linewidth}
			\centering
			\includegraphics[width=3.2cm]{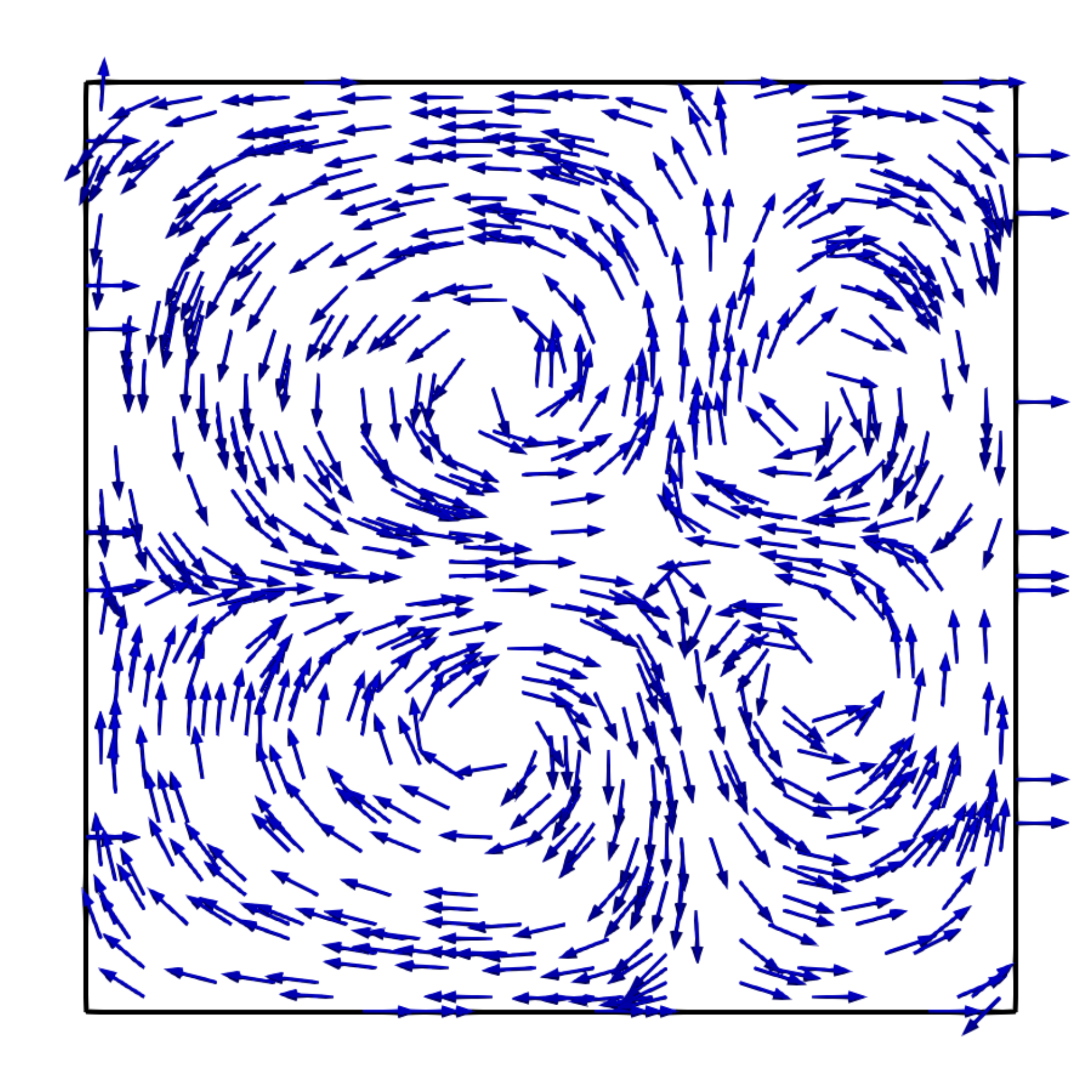} 
			\caption*{$t=1.2$}
		\end{minipage} \\
		\vspace{-0.1cm}
		\begin{minipage}[t]{0.24\linewidth}
			\centering
			\includegraphics[width=3.1cm]{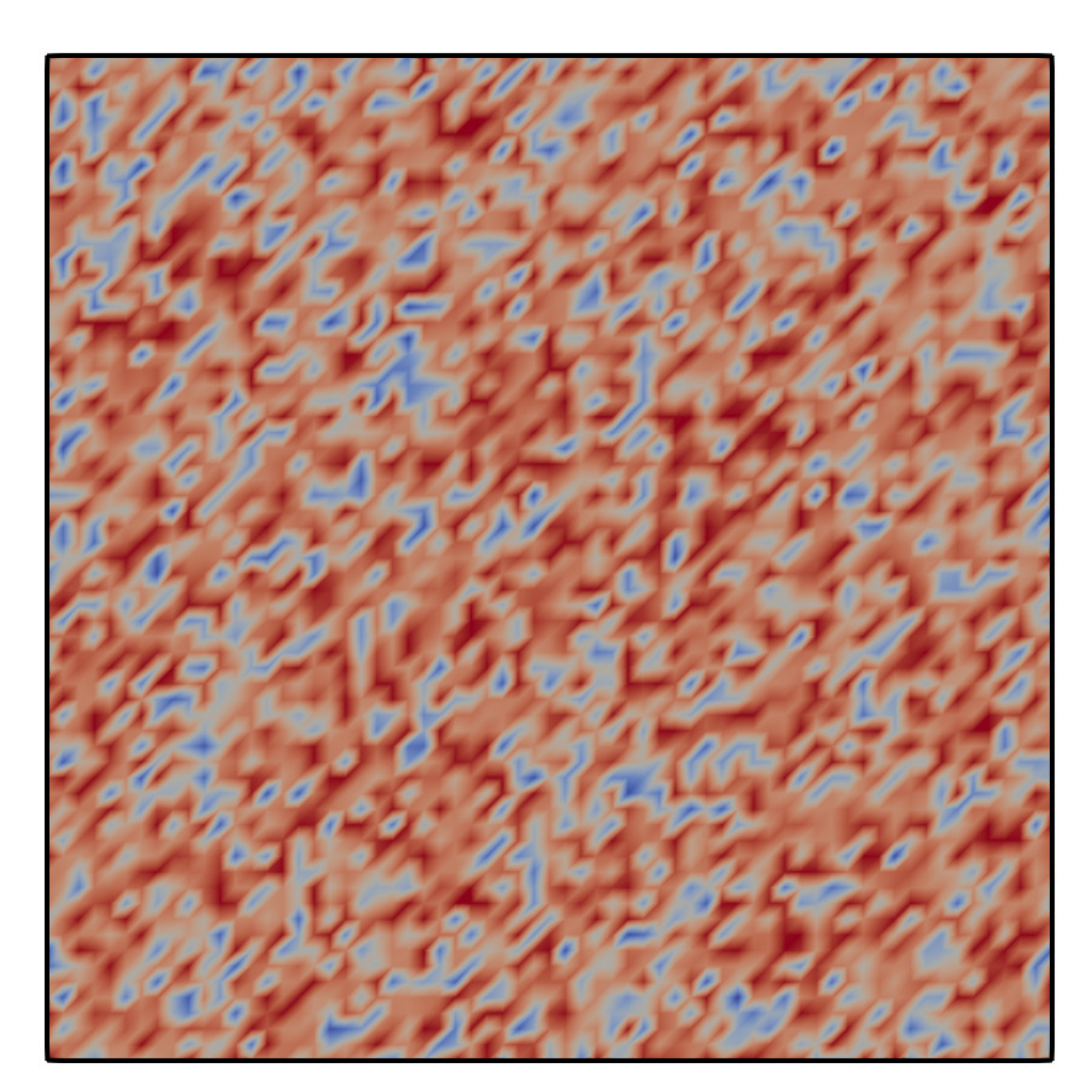} 
			\caption*{$t=0$}
		\end{minipage}
		\begin{minipage}[t]{0.24\linewidth}
			\centering
			\includegraphics[width=3.1cm]{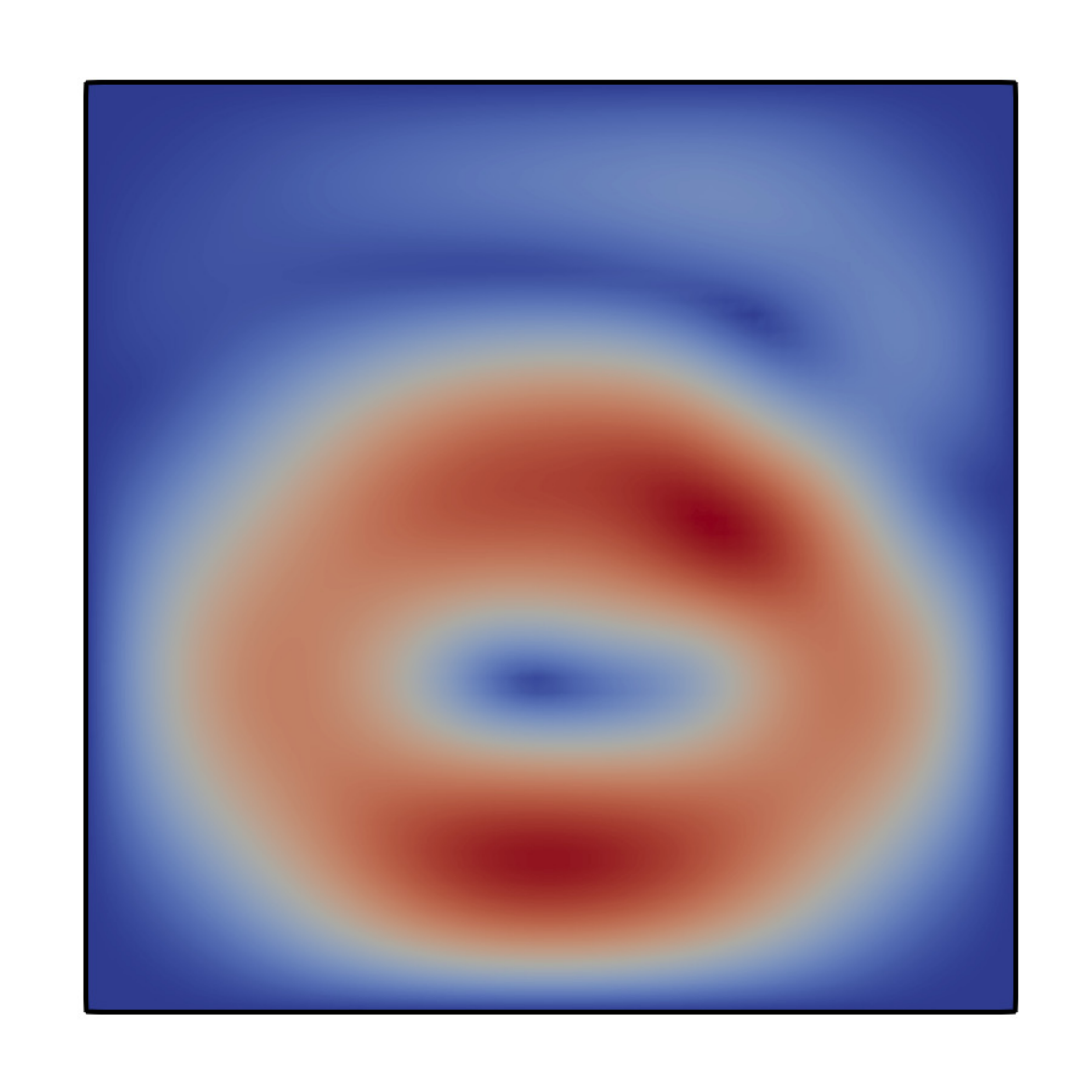} 
			\caption*{$t=0.5$}
		\end{minipage}
		\begin{minipage}[t]{0.24\linewidth}
			\centering
			\includegraphics[width=3.1cm]{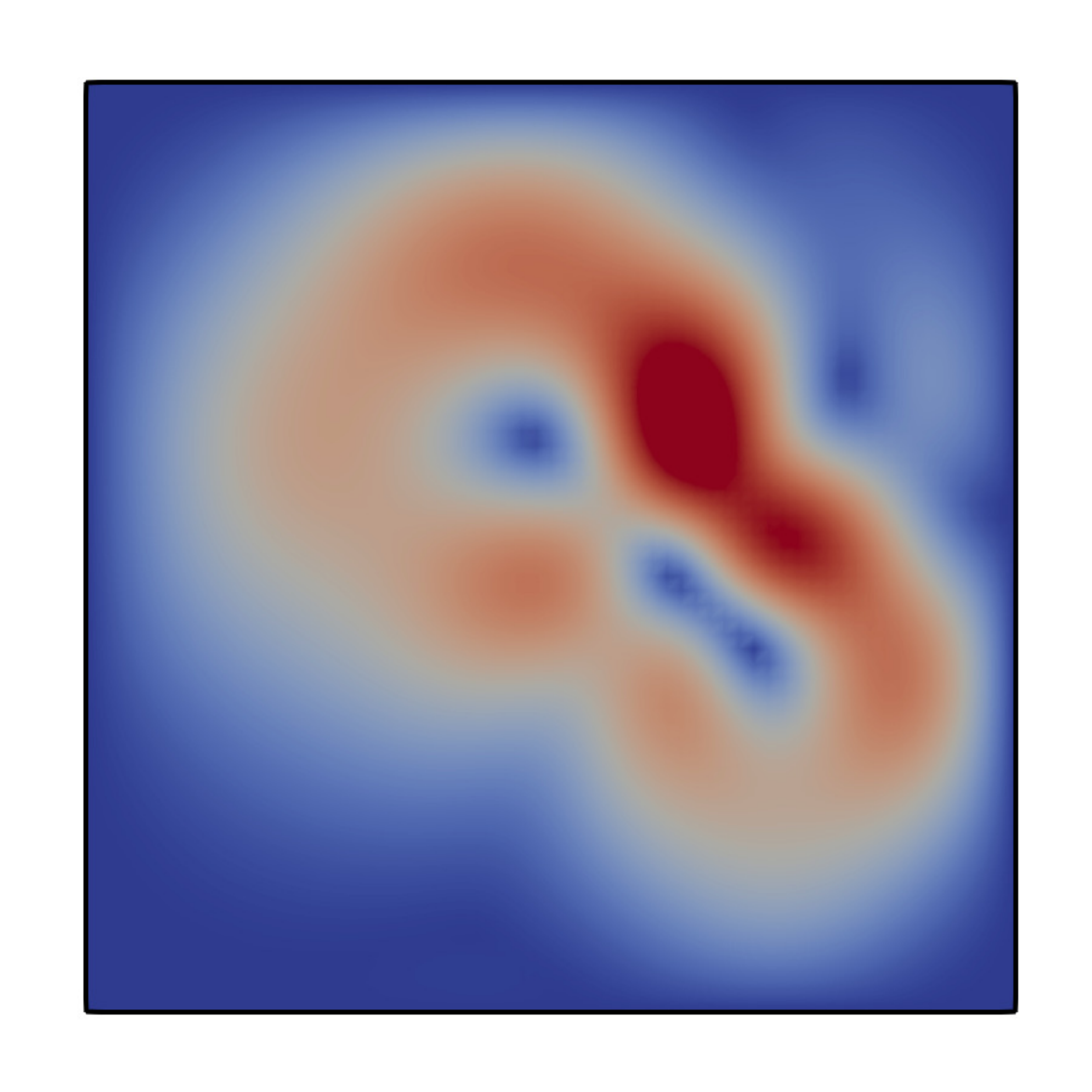} 
			\caption*{$t=0.8$}
		\end{minipage} 
		\begin{minipage}[t]{0.24\linewidth}
			\centering
			\includegraphics[width=3.1cm]{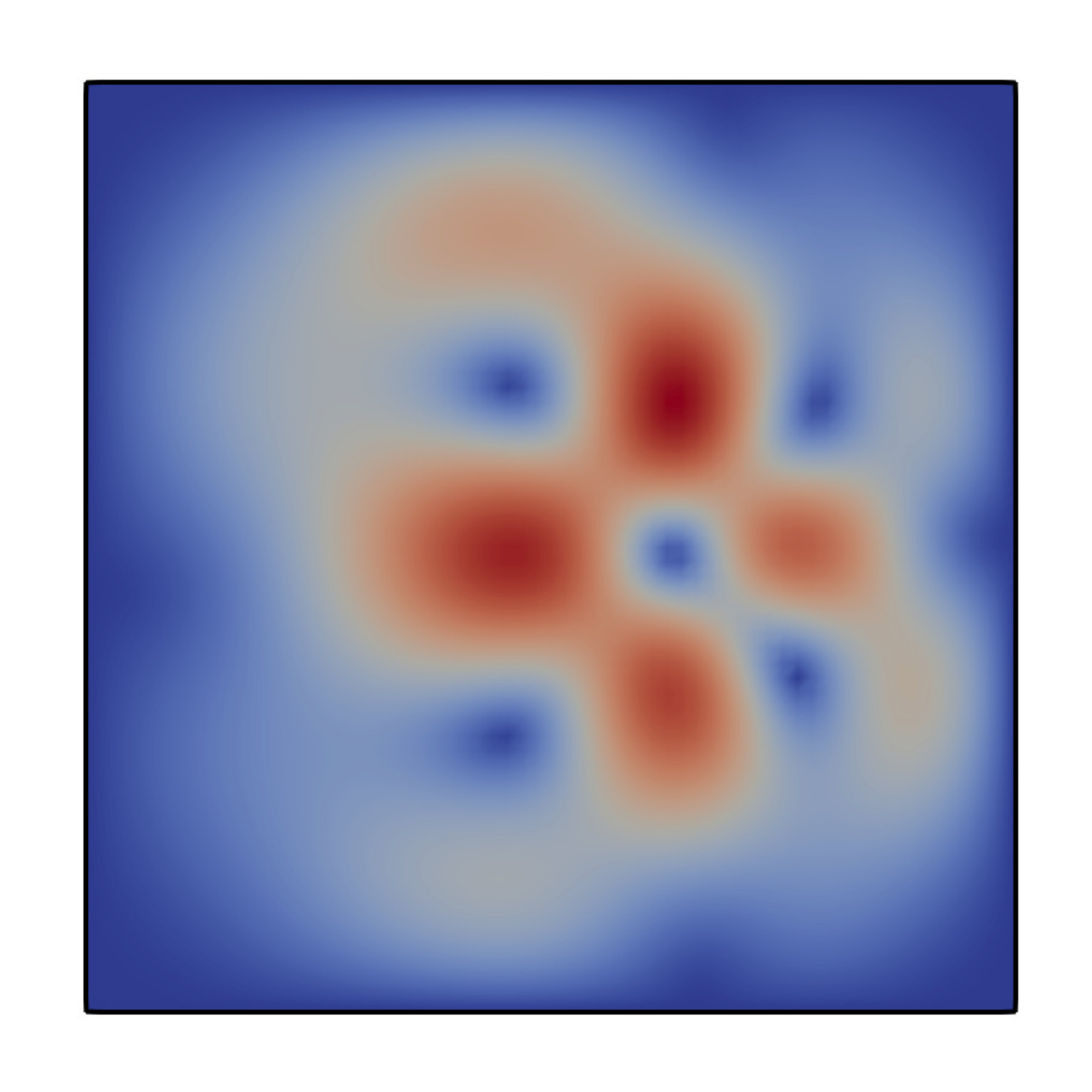} 
			\caption*{$t=1.2$}
		\end{minipage}\\
		\vspace{-0.1cm}
		\begin{minipage}[t]{0.24\linewidth}
			\centering
			\includegraphics[width=3.1cm]{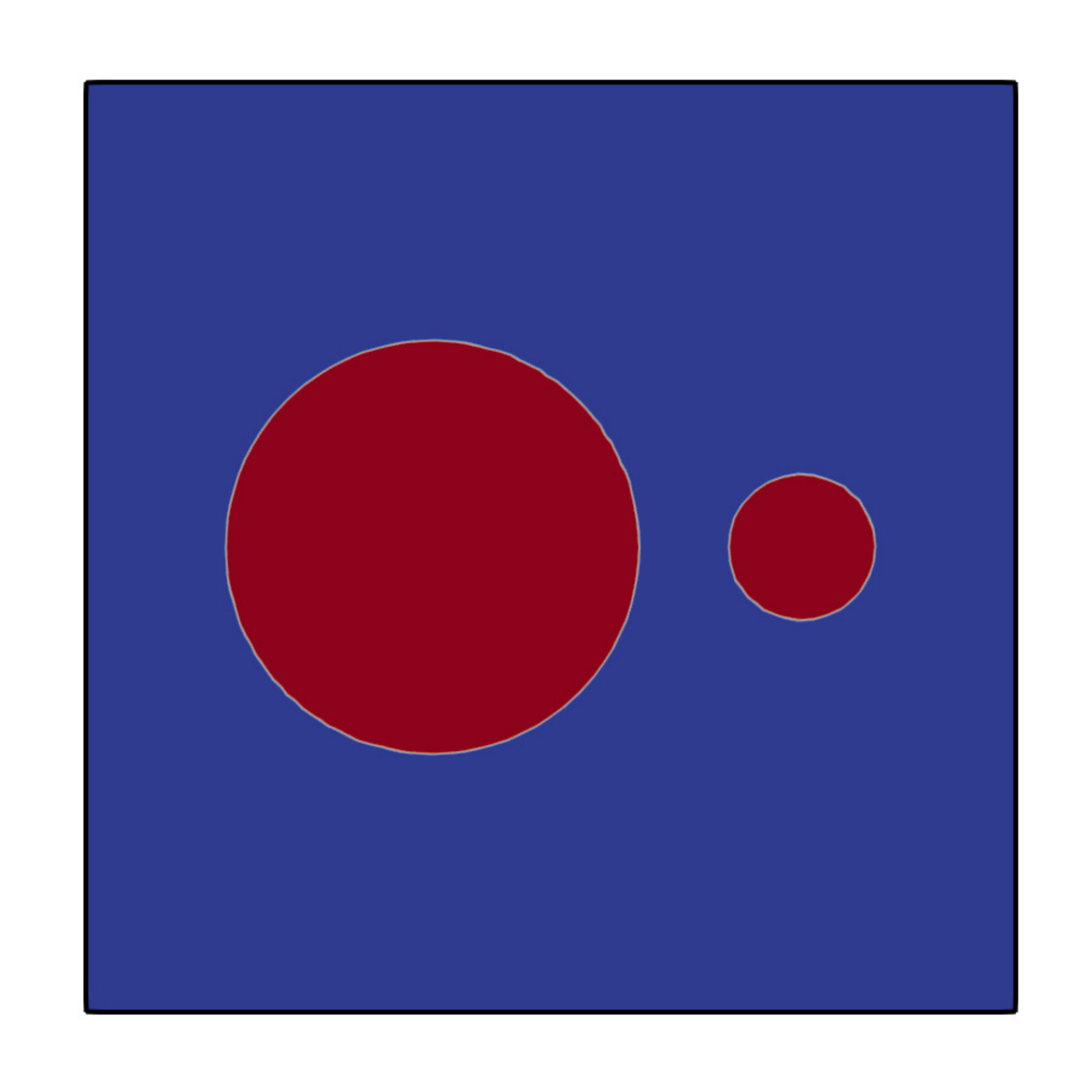} 
			\caption*{$t=0$}
		\end{minipage}
		\begin{minipage}[t]{0.24\linewidth}
			\centering
			\includegraphics[width=3.1cm]{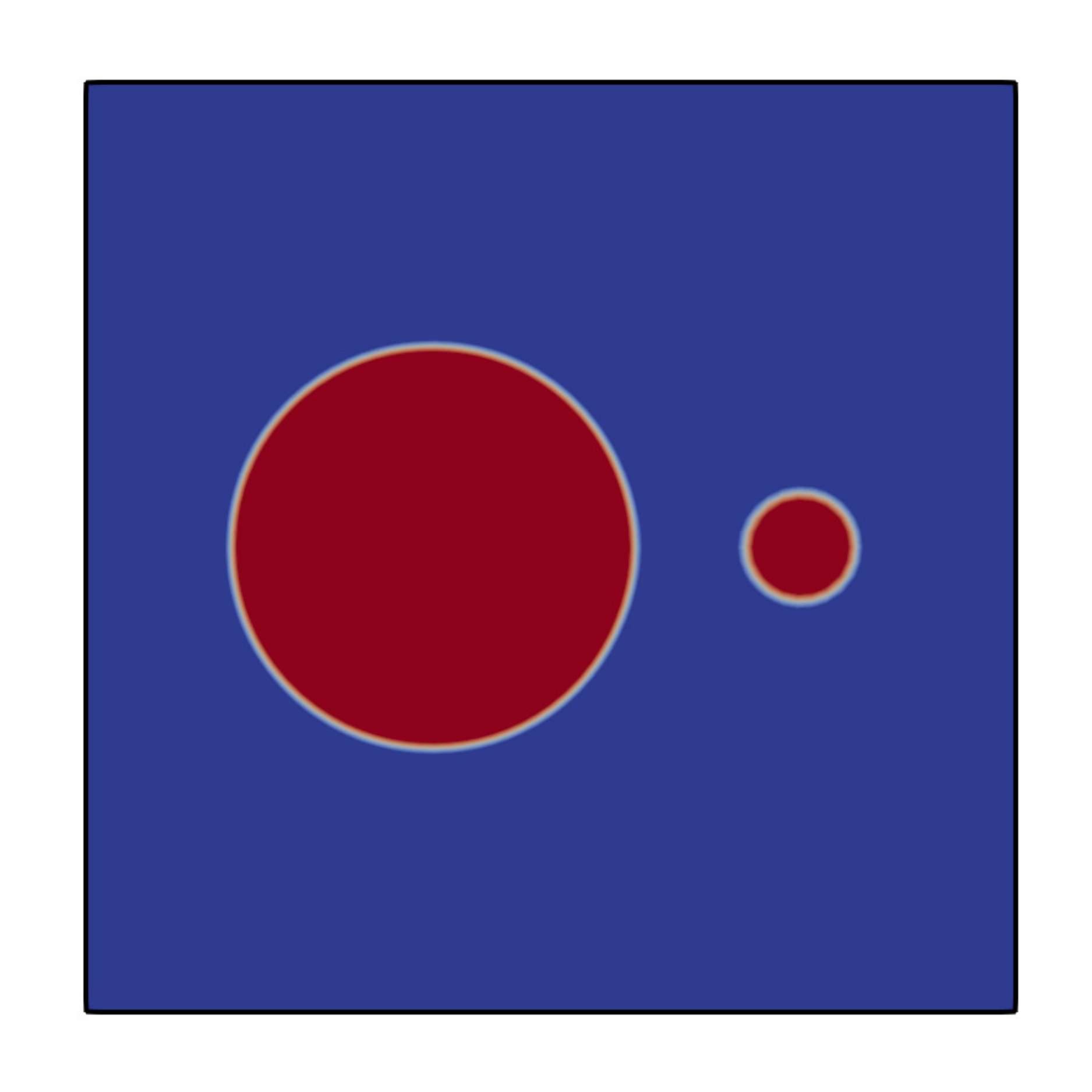} 
			\caption*{$t=0.5$}
		\end{minipage}
		\begin{minipage}[t]{0.24\linewidth}
			\centering
			\includegraphics[width=3.1cm]{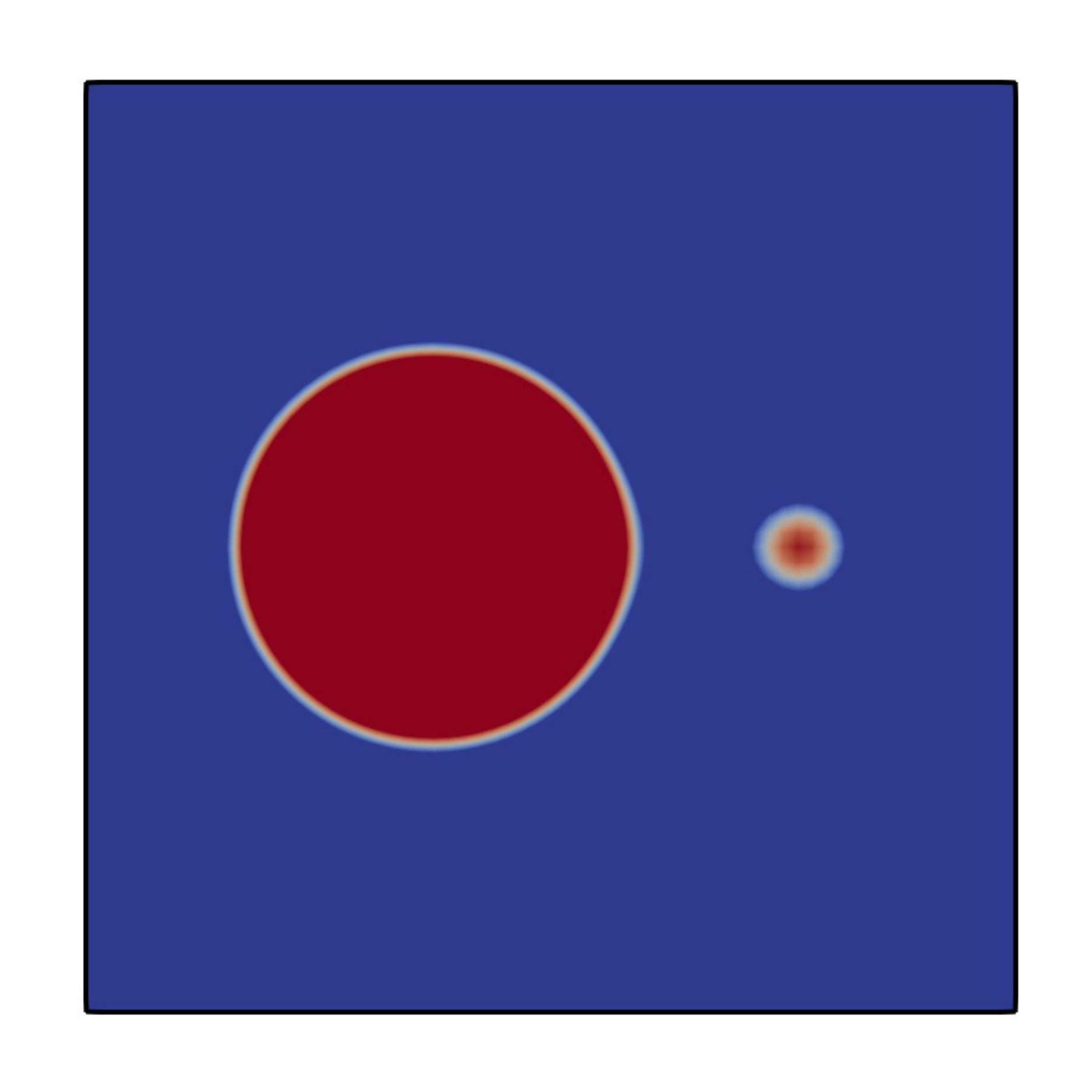} 
			\caption*{$t=0.8$}
		\end{minipage} 
		\begin{minipage}[t]{0.24\linewidth}
			\centering
			\includegraphics[width=3.1cm]{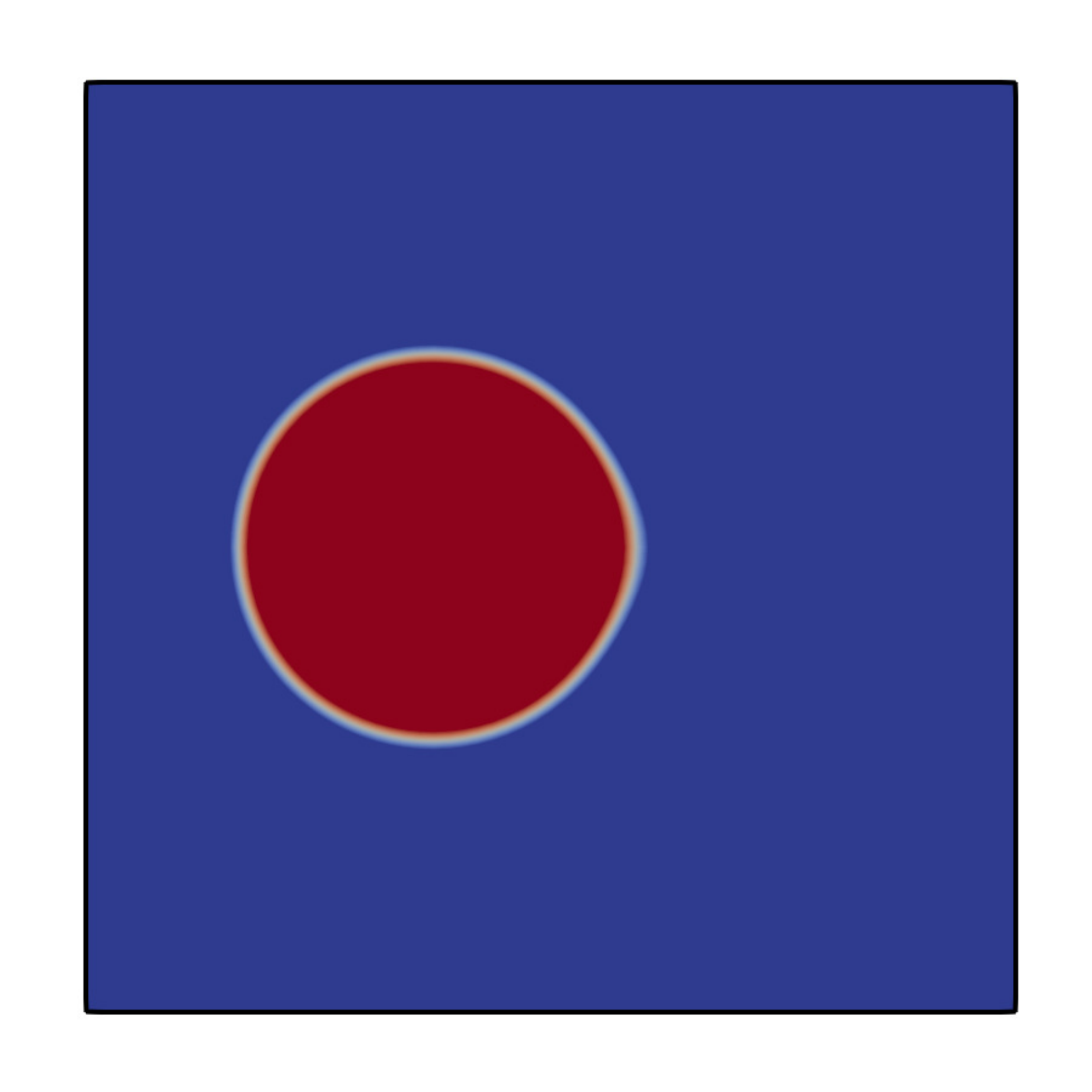} 
			\caption*{$t=1.2$}
		\end{minipage}
		\vspace{-0.5cm}
		\caption{Time evolution of phase-field dynamics in circular bubble shrinking and fluid self-organization in active fluids: the vector field of velocity (top row), velocity magnitude (middle row), and phase field (bottom row) at selected time instances}
		\label{fig:Phase-field dynamics of circular bubble shrinking and fluid self-organization in active fluid}
	\end{figure}

	\subsection{Three-dimensional phase-field motion by mean curvature}
	\label{Three-dimensional phase-field motion by mean curvature}
	In this subsection, we apply \Cref{fully discrete formulations scheme} with uniform time grids to simulate the motion by mean curvature in a 3D domain \(\Omega = [0, 1]^3 \) within the context of the Allen-Cahn active fluid system. 
	The initial profile for the phase field is defined as a zero-level set function representing a sphere with radius 0.35 centered at \( (0.5, 0.5, 0.5) \), i.e., 
	\[
	\phi(x, y, z) = \tanh \Big( \frac{0.35 - \sqrt{(x - 0.5)^2 + (y - 0.5)^2 + (z - 0.5)^2}}{0.04\sqrt{2}} \Big). 
	\]
	The boundary conditions are: $u|_{\partial D} = \Delta u|_{\partial D} = 0$ and $\partial_n \phi = \partial_n m = 0$ on $\partial D$. 
	The parameters in this experiment are set to be $\mu = \nu = \beta = \alpha = \gamma = 1$, $\sigma = 10$, $\kappa = 0.01$. 
	The simulation is carried out over the time interval $[0, 0.2]$ with the constant time step size $\Delta t = 0.01$ and the uniform mesh size $h = \frac{1}{64}$.

	Figure~\ref{fig:Motion by Mean Curvature} demonstrates that \Cref{fully discrete formulations scheme} with uniform time grids correctly simulates the zero-level isosurfaces of the Allen-Cahn active fluid at given time instances.

	\begin{figure}[htbp]
		\centering
		\begin{minipage}[t]{0.32\linewidth}
			\centering
			\includegraphics[width=3.2cm]{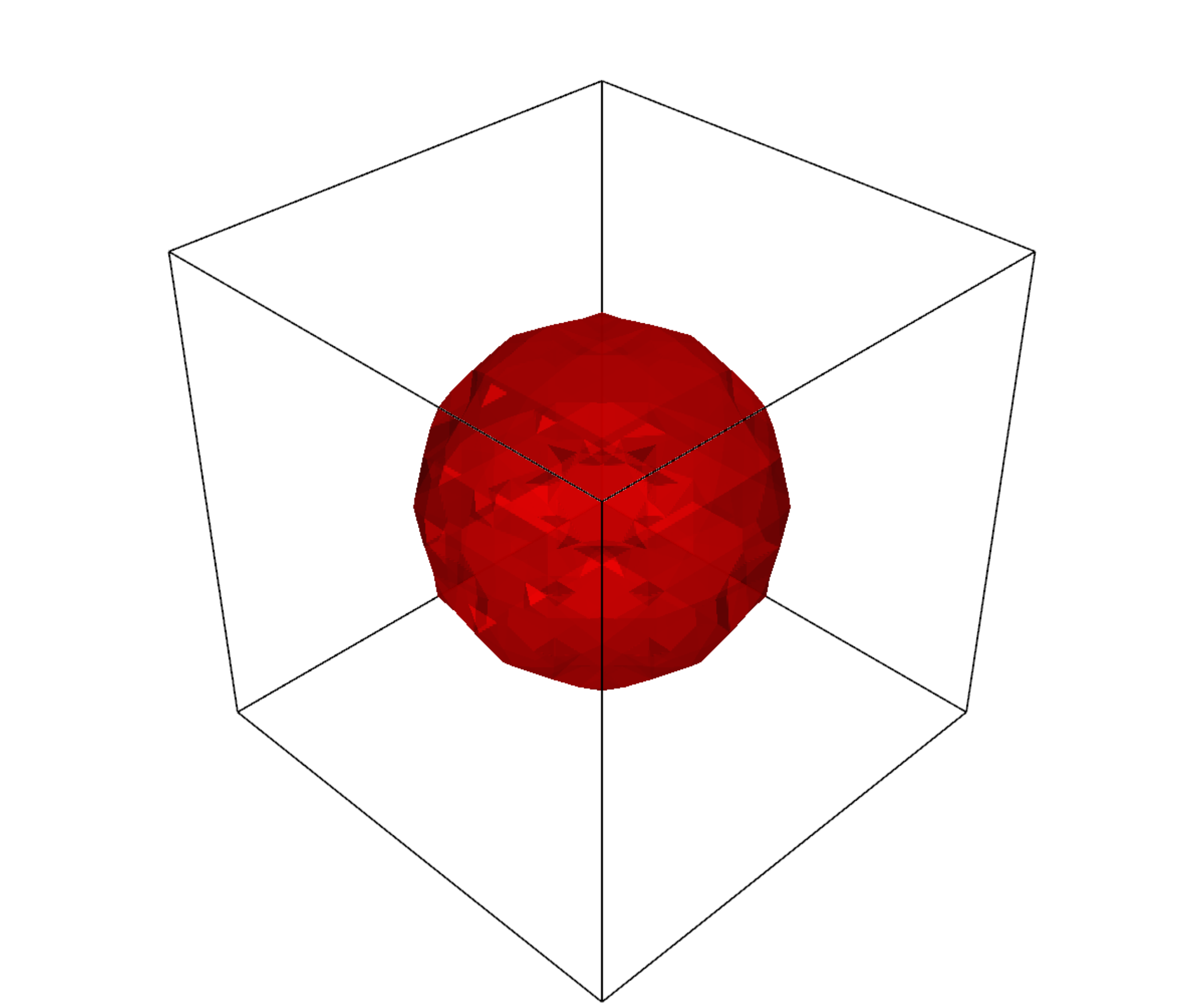}
			\caption*{$t=0$}
		\end{minipage}
		\begin{minipage}[t]{0.32\linewidth}
			\centering
			\includegraphics[width=3.2cm]{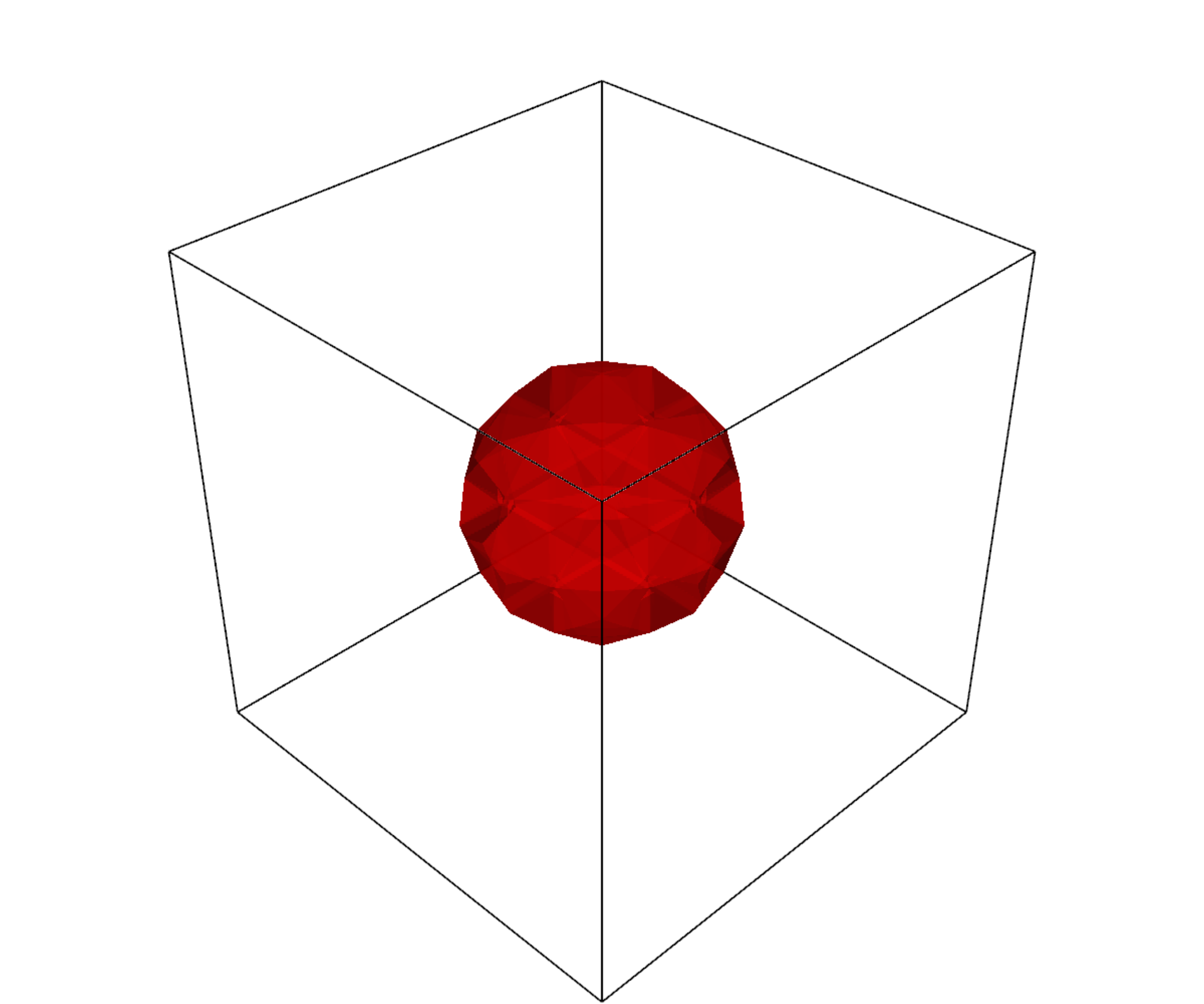}
			\caption*{$t = 0.1$}
		\end{minipage}
		\begin{minipage}[t]{0.32\linewidth}
			\centering
			\includegraphics[width=3.2cm]{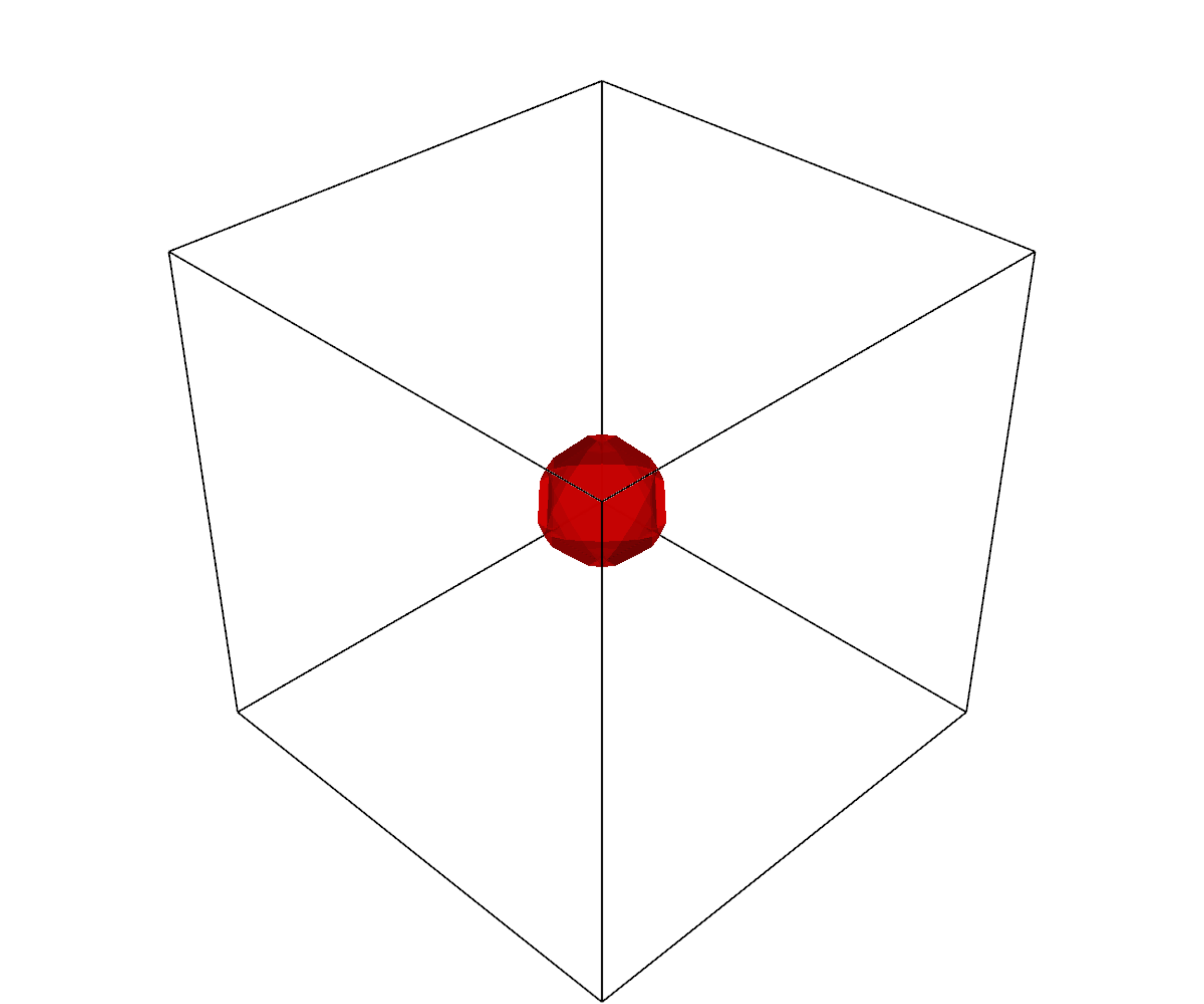}
			\caption*{$t = 0.2$}
		\end{minipage}
		\vspace{-0.1cm}
		\caption{Zero-level isosurfaces of the computational solutions at selected time instances.}
		\label{fig:Motion by Mean Curvature}
	\end{figure}

	Then, we revise the initial condition of the phase field function to the dumbbell-shaped initial condition to observe the dynamics of the numerical solution under motion by mean curvature for various initial conditions.  Now the initial phase field function becomes
	\[
	\phi(x,y,z) =
	\begin{cases}
		\tanh \bigg(\dfrac{R_{1} - \sqrt{(x-0.4)^{2} + (y-0.5)^{2} + (z-0.5)^{2}}}{\sqrt{2\varepsilon}} \bigg),
		&  x < 0.4 + \sqrt{R_{1}^{2} - R_{2}^{2}}, \\[2ex]
		\tanh \bigg( \dfrac{R_{1} - \sqrt{(x-1.6)^{2} + (y-0.5)^{2} + (z-0.5)^{2}}}{\sqrt{2\varepsilon}} \bigg),
		&x > 1.6 - \sqrt{R_{1}^{2} - R_{2}^{2}}, \\[2ex]
		\tanh \bigg( \dfrac{R_{2} - \sqrt{(y-0.5)^{2} + (z-0.5)^{2}}}{\sqrt{2\varepsilon}} \bigg),
		& \text{otherwise},
	\end{cases}
	\]
	which has a toroidal shape.
	Figure~\ref{fig:Motion by Mean Curvature dumbbell-shaped} shows that \Cref{fully discrete formulations scheme} correctly simulates the time evolution of the zero-level isosurface with the initial phase field function having a toroidal shape.

	\begin{figure}[htbp]
		\centering
		\begin{minipage}[t]{0.32\linewidth}
			\centering
			\includegraphics[width=3.2cm]{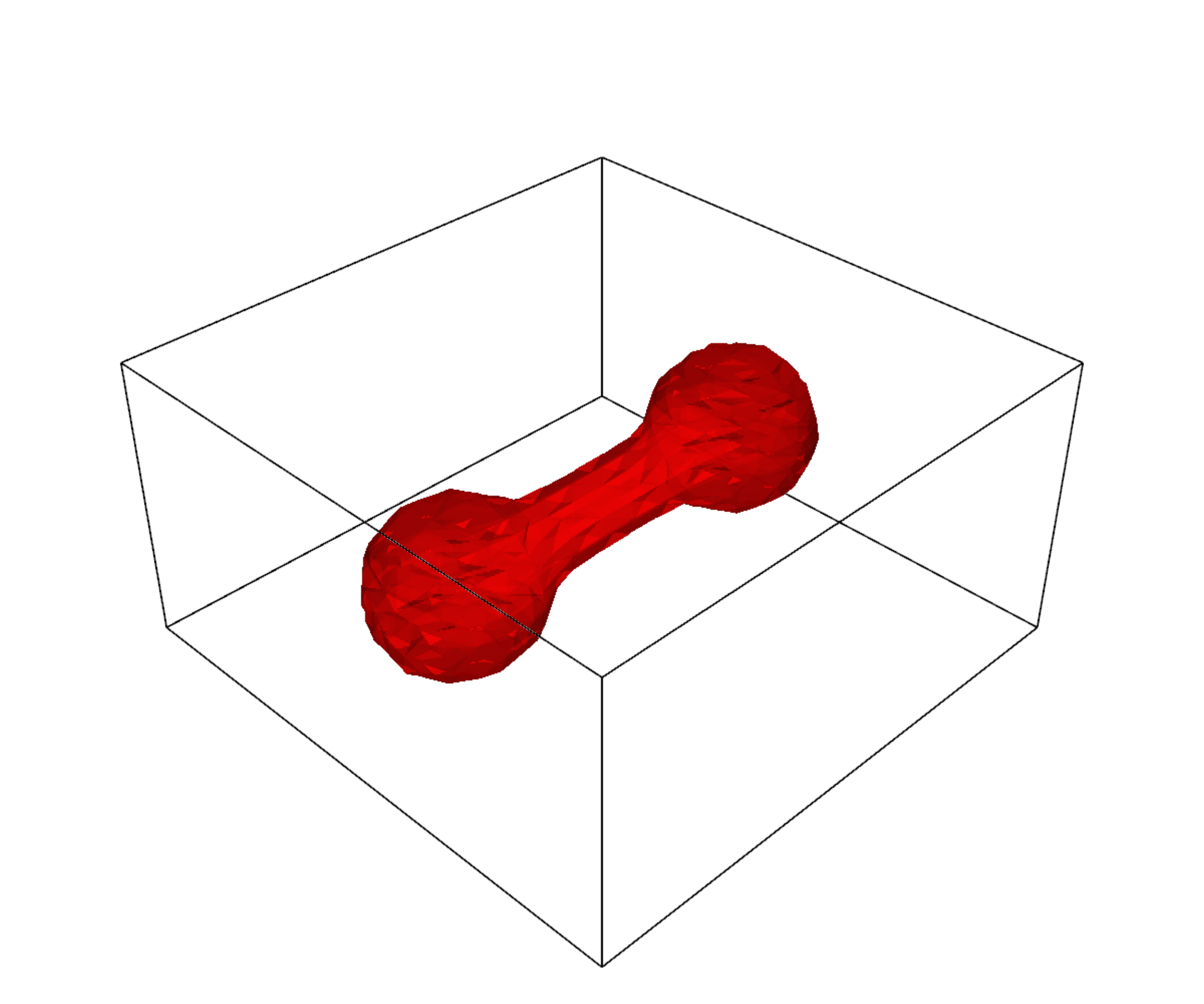}
			\caption*{$t=0$}
		\end{minipage}
		\begin{minipage}[t]{0.32\linewidth}
			\centering
			\includegraphics[width=3.2cm]{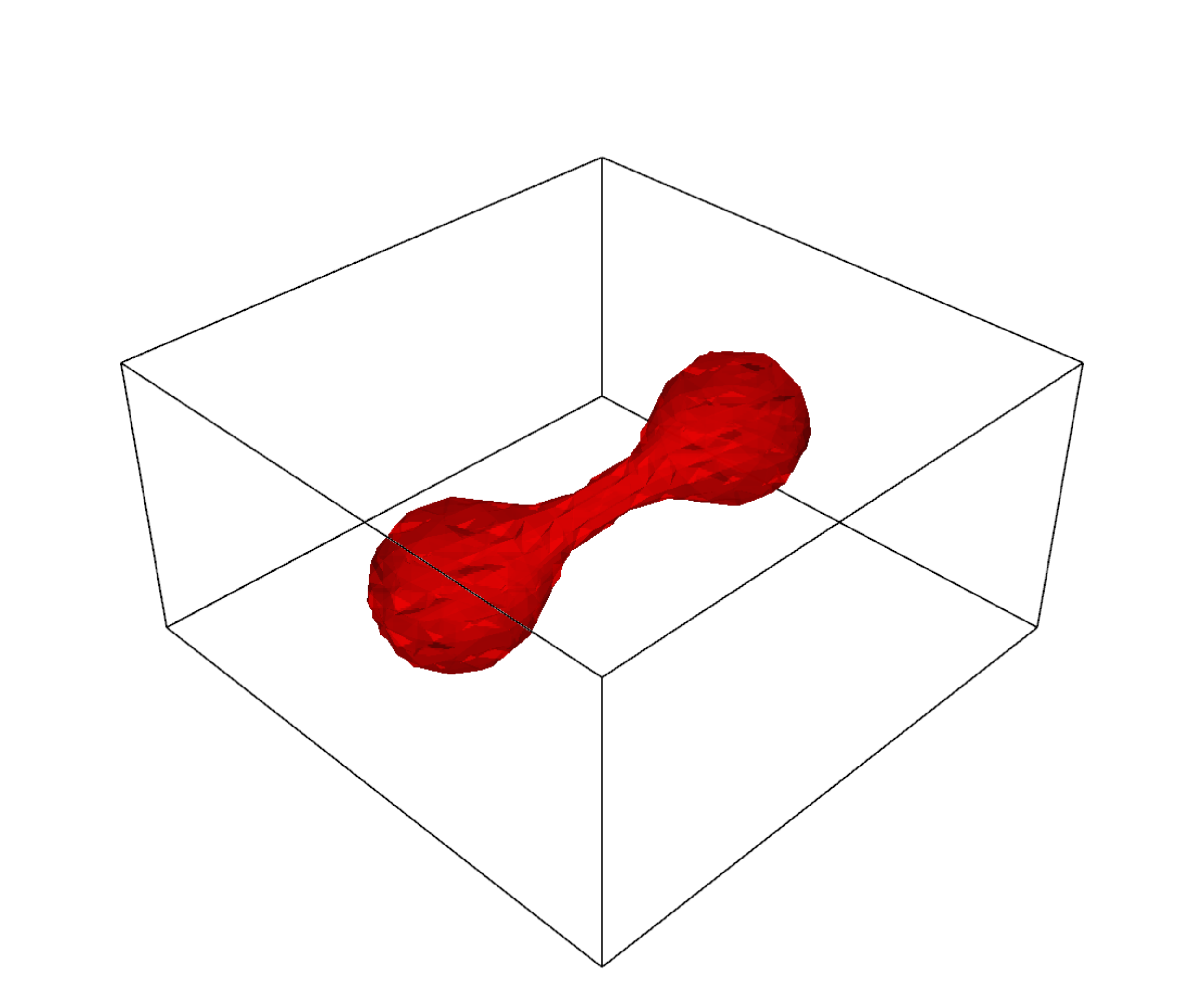}
			\caption*{$t = 0.1$}
		\end{minipage}
		\begin{minipage}[t]{0.32\linewidth}
			\centering
			\includegraphics[width=3.2cm]{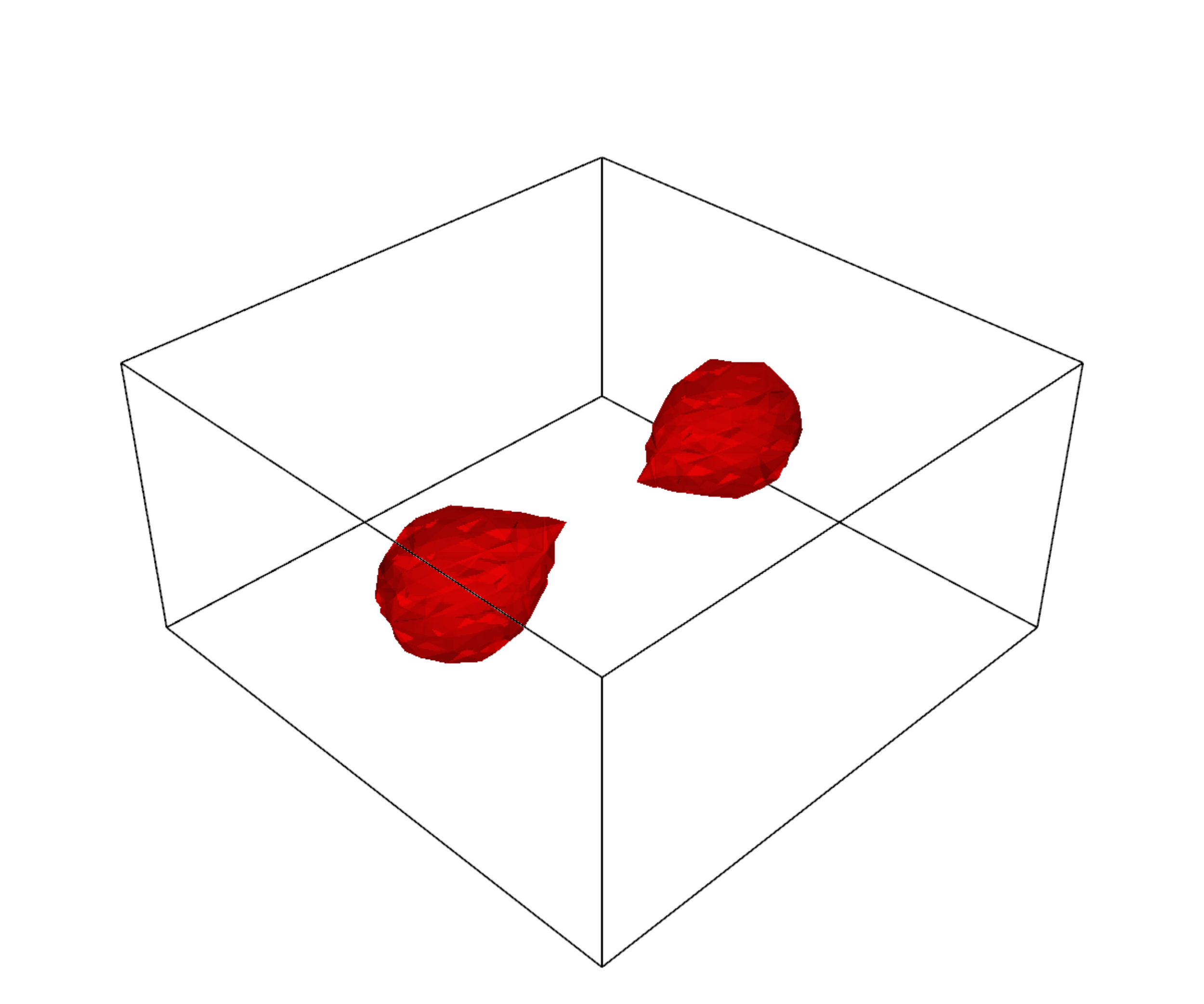}
			\caption*{$t = 0.2$}
		\end{minipage}
		\vspace{-0.1cm}
		\caption{Zero-level isosurfaces of the computational solutions at selected time instances for the initial phase field function having a toroidal shape.}
		\label{fig:Motion by Mean Curvature dumbbell-shaped}
	\end{figure}

	\subsection{Time adaptive test}
	To relieve the conflict between accuracy and computational cost as well as utilize fine properties of the DLN method under non-uniform time grids, we design a time-adaptive approach for \Cref{fully discrete formulations scheme} based on the minimum dissipation criterion proposed by Capuano, Sanderse, De Angelis, and Coppola \cite{capuano2017minimum}. 
	At each time step, we compute the numerical dissipation (ND) rate for both velocity $u$ and phase field variable $\phi$,
	the viscosity-induced dissipation (VD) for $u$, the phase field-induced dissipation (PD) for $\phi$
	\begin{align*}
		&\text{ND: }\ \ 
		\epsilon_{ND}^{u} = \frac{1}{\widehat{k}_n} \| u_{n,\alpha}^{h} \|^2, 
		\quad
		\epsilon_{ND}^{\phi} = \frac{1}{\widehat{k}_n} \| \phi_{n,\alpha}^{h} \|^2, 
		\\
		&\text{VD: }\ \ 
		\epsilon_{VD}^{u} = \mu \| \nabla u_{n,\beta}^h \|^2,
		\quad
		\text{PD: }\ \ 
		\epsilon_{PD}^{\phi} = \kappa \| \nabla \phi_{n,\beta}^h \|^2,
	\end{align*}
	and the ratios of ND over VD and PD: $\chi_u = \epsilon_{ND}^{u} / \epsilon_{VD}^{u}$, $\chi_w = \epsilon_{ND}^{\phi} / \epsilon_{PD}^{\phi}$.
	Then we adjust the next time step $k_{n+1}$ by 
	\begin{equation}
		\label{time-controller}
		\begin{split}
			&k_{n+1} = 
			\begin{cases} 
				\min \{2 k_n, k_{\max} \}, & \text{if}~  \max \{|\chi_u|,|\chi_\phi|\} \leq \delta, \\
				\max \{\frac{1}{2}k_n, k_{\min} \}, & \text{if}~ \max \{|\chi_u|,|\chi_\phi|\} > \delta,
			\end{cases}
		\end{split}
	\end{equation}
	for the required tolerance $\delta >0$.
	We observe from \eqref{time-controller} that this strategy allows a larger time step ($k_{n+1} = 2k_{n}$) for the next operation if the ratios are below the required tolerance; otherwise, it decreases the next time step by half. 
	Meanwhile, we set a maximum time step $k_{\max}$ for accuracy and a minimum time step $k_{\min}$ for efficiency. 
	The complete adaptive procedure incorporating the above time step selection strategy is summarized in Algorithm~\ref{Adaptive_procedure} below.
	\begin{algorithm}
		\caption{Adaptive procedure for solving Scheme~\ref{fully discrete formulations scheme}.}
		\label{Adaptive_procedure}
		\begin{algorithmic}
			\State Set $u_0^h = \mathcal{S}_h u_0, \phi_0^h = \mathcal{R}_h \phi_0$ and compute $u_1^h$, $w_1^h$, $\xi_1^h$, $p_1^h$, $\phi_1^h$ and $m_1^h$ using a Crank--Nicolson scheme;
			\For{$n = 1,2, \dots, M-1$}
			\State Solve active fluid system \eqref{ACAFs} by Scheme~\ref{fully discrete formulations scheme};
			\State Compute numerical indicators $\epsilon_{ND}^{u},\epsilon_{ND}^{\phi},\epsilon_{VD}^{u},\epsilon_{PD}^{\phi} $;
			\State Calculate $\chi_u = \epsilon^{u}_{ND}/\epsilon^{u}_{VD}$, $\chi_\phi = \epsilon^{\phi}_{ND}/\epsilon^{\phi}_{PD}$;
			\If{$ \max \{|\chi_u|,|\chi_\phi|\} \leq \delta$}
			\State Set $k_{n+1} = \min \{2 k_n, k_{\max}\}$;
			\Else
			\State Set $k_{n+1} = \max\{0.5 k_n, k_{\min}\}$;
			\EndIf
			
			\State Set $(u_{n}^h, w_{n}^h, \xi_{n}^h, p_{n}^h, \phi_{n}^h,m_{n}^h) \Leftarrow (u_{n+1}^h, w_{n+1}^h, \xi_{n+1}^h, p_{n+1}^h, \phi_{n+1}^h,m_{n+1}^h)$ \\
			\quad \quad and $(u_{n-1}^h, w_{n-1}^h, \xi_{n-1}^h, p_{n-1}^h, \phi_{n-1}^h,m_{n-1}^h) \Leftarrow (u_{n}^h, w_{n}^h, \xi_{n}^h, p_{n}^h, \phi_{n}^h,m_{n}^h)$ ;
			\State Go to the next step;
			\EndFor
		\end{algorithmic}
	\end{algorithm}

	We evaluate the performance of the proposed time-adaptive strategy via the former experiment in Subsection \ref{Phase-field shape relaxation and fluid self-organization in active fluid}. 
	We set $k_{\max} = 0.01$, $k_{\min} = 1.\rm{e}-4$, $\delta = 0.1$, $k_{0} = k_{\min}$, carry out the experiment with different level of Reynolds number $\rm{Re} = 1/\mu$: $5.\rm{e}+2, 5.\rm{e}+3, 5.\rm{e}+4,  5.\rm{e}+5,  5.\rm{e}+6, 5.\rm{e}+7$, and make other parameters and conditions unchanged. 
	We also compare this approach against the corresponding constant time-stepping scheme with $20000$ time steps for the effectiveness of time adaptivity. 
	Both approaches achieve very similar results of the evolution of the vector field of velocity and the velocity field over the time interval $[0,2]$. 
	However, \Cref{Comparison of Different Reynolds Numbers with Adaptive Time Steps and Constant Time Steps} shows that the constant time-stepping scheme costs many more time steps under all levels of Reynolds number selected, which emphasizes the superiority of the time-adaptive approach.

	\begin{table}
		\centering
		\caption{The constant time-stepping scheme costs many more time steps under all levels of Reynolds number selected, which emphasizes the superiority of the time-adaptive approach.}
		\begin{tabular}{@{}lrrrrrr@{}}
			\hline
			Re$^a$  & 5.\rm{e}+2 &  5.\rm{e}+3 & 5.\rm{e}+4&  5.\rm{e}+5 &  5.\rm{e}+6&  5.\rm{e}+7 \\
			\hline
			Adaptive$^b$   &  356 &  360& 370& 362  & 352&368\\
			Constant$^c$  &20000& 20000 & 20000 & 20000  & 20000& 20000  \\
			\hline
		\end{tabular}
		\label{Comparison of Different Reynolds Numbers with Adaptive Time Steps and Constant Time Steps}
		
		\vspace{1ex}
		{\footnotesize
			$^a$ Reynolds number; \\
			$^b$ Number of computational steps with adaptive time steps; \\
			$^c$ Number of computational steps with fixed time steps.
		}
	\end{table}

    \section{Conclusion}
    \label{sec:sec6}
    In this study, we have developed an efficient spatial-temporal discretization framework for solving the equivalent second-order reformulation of the Allen-Cahn active fluid system. 
	The family of variable time-stepping DLN schemes, known for its second-order accuracy and nonlinearity stability, serves as the time integrator in our approach. 
	For spatial discretization, we introduce two auxiliary variables, which leads to a divergence-free preserving mixed finite element method that is both efficient and straightforward to implement.
	
	We rigorously proved that the family of fully discrete DLN schemes preserves the discrete energy dissipation law, which is crucial for maintaining the stability of the active fluid system.
	Moreover, we have designed a time-adaptive strategy aimed at enhancing the robustness of the scheme while improving computational efficiency. This adaptive method dynamically adjusts the time step size based on the system's dissipation characteristics, ensuring optimal performance throughout the simulation.
	Several numerical experiments validate our theoretical findings, demonstrating that the family of fully discrete DLN schemes, along with the time-adaptive approach, provides an efficient framework for solving Allen-Cahn phase field coupled active fluid dynamics and other more complex systems.

    \section*{Declarations} \ 
    \vskip 0.5cm
    \noindent\textbf{Funding.}
	Wenju Zhao was partially supported by National Key R\&D Program of China (No. 2023YFA1008903), Natural Science Foundation of Shandong Province (No. ZR2023ZD38), National Natural Science Foundation of China (No. 12131014).
	
	\noindent\textbf{Conflict of interest.}
	The authors declare no potential conflict of interests.
	
	\noindent\textbf{Code available.}
	Upon request.
	
	\noindent\textbf{Ethics and consent to participate.} Not applicable

    \bibliographystyle{abbrv}
	\bibliography{bibliography}

\begin{thebibliography}{10}

\bibitem{abgrall2023hybrid}
R.~Abgrall and W.~Barsukow.
\newblock A hybrid finite element--finite volume method for conservation laws.
\newblock {\em Applied Mathematics and Computation}, 447:127846, 2023.

\bibitem{abgrall2020analysis}
R.~Abgrall, J.~Nordstr{\"o}m, P.~{\"O}ffner, and S.~Tokareva.
\newblock Analysis of the sbp-sat stabilization for finite element methods part
  i: linear problems.
\newblock {\em Journal of Scientific Computing}, 85(2):1--29, 2020.

\bibitem{ait2023time}
K.~Ait-Ameur, Y.~Maday, and M.~Tajchman.
\newblock Time parallel algorithm for multi-step time schemes.
\newblock In {\em Journ{\'e}es Scientifiques INRIA Chile 2023}, 2023.

\bibitem{ayuso2005postprocessed}
B.~Ayuso, B.~Garc{\'\i}a-Archilla, and J.~Novo.
\newblock The postprocessed mixed finite-element method for the navier--stokes
  equations.
\newblock {\em SIAM Journal on Numerical Analysis}, 43(3):1091--1111, 2005.

\bibitem{BAI20123265}
Z.-Z. Bai and X.~Yang.
\newblock Continuous-time accelerated block successive overrelaxation methods
  for time-dependent stokes equations.
\newblock {\em Journal of Computational and Applied Mathematics},
  236(13):3265--3285, 2012.

\bibitem{baker2024numerical}
K.~Baker, L.~Banjai, and M.~Ptashnyk.
\newblock Numerical analysis of a time-stepping method for the westervelt
  equation with time-fractional damping.
\newblock {\em Mathematics of Computation}, 93(350):2711--2743, 2024.

\bibitem{banjai2012runge}
L.~Banjai, M.~Messner, and M.~Schanz.
\newblock Runge--kutta convolution quadrature for the boundary element method.
\newblock {\em Computer methods in applied mechanics and engineering},
  245:90--101, 2012.

\bibitem{MR4500252}
R.~Cao, N.~Jiang, and H.~Yang.
\newblock Three linear, unconditionally stable, second order decoupling methods
  for the {A}llen-{C}ahn-{N}avier-{S}tokes phase field model.
\newblock {\em J. Math. Anal. Appl.}, 519(1):Paper No. 126792, 23, 2023.

\bibitem{capuano2017minimum}
F.~Capuano, B.~Sanderse, E.~DE~ANGELIS, G.~Coppola, et~al.
\newblock A minimum-dissipation time-integration strategy for large-eddy
  simulation of incompressible turbulent flows.
\newblock In {\em AIMETA 2017 Proceedings of the XXIII Conference of the
  Italian Association of Theoretical and Applied Mechanics}, pages 2311--2323,
  2017.

\bibitem{MR4927938}
Y.~Chen, D.~Luo, W.~Pei, and Y.~Xing.
\newblock Efficient variable time-stepping adaptive {DLN} algorithms for the
  {A}llen-{C}ahn equation.
\newblock {\em J. Sci. Comput.}, 104(2):Paper No. 67, 44, 2025.

\bibitem{CLPX2025_JSC}
Y.~Chen, D.~Luo, W.~Pei, and Y.~Xing.
\newblock Efficient variable time-stepping adaptive dln algorithms for the
  allen-cahn equation.
\newblock {\em Journal of Scientific Computing}, 104(2):67, Jul 2025.

\bibitem{da2025error}
L.~B. Da~Veiga, K.~Hu, and L.~Mascotto.
\newblock Error estimates for a helicity-preserving finite element
  discretisation of an incompressible magnetohydrodynamics system.
\newblock {\em ESAIM: Mathematical Modelling and Numerical Analysis},
  59(2):1075--1094, 2025.

\bibitem{da2017divergence}
L.~B. Da~Veiga, C.~Lovadina, and G.~Vacca.
\newblock Divergence free virtual elements for the stokes problem on polygonal
  meshes.
\newblock {\em ESAIM: Mathematical Modelling and Numerical Analysis},
  51(2):509--535, 2017.

\bibitem{dahlquist1983stability}
G.~G. Dahlquist, W.~Liniger, and O.~Nevanlinna.
\newblock Stability of two-step methods for variable integration steps.
\newblock {\em SIAM journal on numerical analysis}, 20(5):1071--1085, 1983.

\bibitem{decaria2022general}
V.~DeCaria, S.~Gottlieb, Z.~J. Grant, and W.~J. Layton.
\newblock A general linear method approach to the design and optimization of
  efficient, accurate, and easily implemented time-stepping methods in cfd.
\newblock {\em Journal of Computational Physics}, 455:110927, 2022.

\bibitem{ern2022invariant}
A.~Ern and J.-L. Guermond.
\newblock Invariant-domain-preserving high-order time stepping: I. explicit
  runge--kutta schemes.
\newblock {\em SIAM Journal on Scientific Computing}, 44(5):A3366--A3392, 2022.

\bibitem{pnas.1722505115}
S.~Guo, D.~Samanta, Y.~Peng, X.~Xu, and X.~Cheng.
\newblock Symmetric shear banding and swarming vortices in bacterial
  superfluids.
\newblock {\em Proceedings of the National Academy of Sciences},
  115(28):7212--7217, 2018.

\bibitem{he2005stabilized}
Y.~He, Y.~Lin, and W.~Sun.
\newblock Stabilized finite element method for the non-stationarynavier-stokes
  problem.
\newblock {\em Discrete and Continuous Dynamical Systems-B}, 6(1):41--68, 2005.

\bibitem{he2007stability}
Y.~He and W.~Sun.
\newblock Stability and convergence of the crank--nicolson/adams--bashforth
  scheme for the time-dependent navier--stokes equations.
\newblock {\em SIAM Journal on Numerical Analysis}, 45(2):837--869, 2007.

\bibitem{AAMM-16-5}
D.~Hou, J.~Lili, and Q.~Zhonghua.
\newblock A linear doubly stabilized crank-nicolson scheme for the allen–cahn
  equation with a general mobility.
\newblock {\em Advances in Applied Mathematics and Mechanics},
  16(5):1009--1038, 2024.

\bibitem{hou2025unconditionally}
D.~Hou, H.~Liu, and L.~Ju.
\newblock Unconditionally original energy-dissipative and mbp-preserving
  crank-nicolson scheme for the allen-cahn equation with general mobility.
\newblock {\em Computers \& Mathematics with Applications}, 191:86--104, 2025.

\bibitem{layton2022analysis}
W.~Layton, W.~Pei, Y.~Qin, and C.~Trenchea.
\newblock Analysis of the variable step method of dahlquist, liniger and
  nevanlinna for fluid flow.
\newblock {\em Numerical Methods for Partial Differential Equations},
  38(6):1713--1737, 2022.

\bibitem{LPT21_AML}
W.~Layton, W.~Pei, and C.~Trenchea.
\newblock Refactorization of a variable step, unconditionally stable method of
  {D}ahlquist, {L}iniger and {N}evanlinna.
\newblock {\em Appl. Math. Lett.}, 125:Paper No. 107789, 7, 2022.

\bibitem{LPT23_ACSE}
W.~Layton, W.~Pei, and C.~Trenchea.
\newblock Time step adaptivity in the method of {D}ahlquist, {L}iniger and
  {N}evanlinna.
\newblock {\em Advances in Computational Science and Engineering},
  1(3):320--350, 2023.

\bibitem{MR4293957}
B.~Li, S.~Ma, and N.~Wang.
\newblock Second-order convergence of the linearly extrapolated
  {C}rank-{N}icolson method for the {N}avier-{S}tokes equations with {$H^1$}
  initial data.
\newblock {\em J. Sci. Comput.}, 88(3):Paper No. 70, 20, 2021.

\bibitem{MR4471049}
Z.~Li and H.-L. Liao.
\newblock Stability of variable-step {BDF}2 and {BDF}3 methods.
\newblock {\em SIAM J. Numer. Anal.}, 60(4):2253--2272, 2022.

\bibitem{pei2024semi}
W.~Pei.
\newblock The semi-implicit {DLN} algorithm for the {N}avier-{S}tokes
  equations.
\newblock {\em Numer. Algorithms}, 97(4):1673--1713, 2024.

\bibitem{pei2025ensemble}
W.~Pei.
\newblock The variable time-stepping {DLN}-ensemble algorithms for
  incompressible {N}avier-{S}tokes equations.
\newblock {\em Numer. Algorithms}, 2025.

\bibitem{qi2022emergence}
K.~Qi, E.~Westphal, G.~Gompper, and R.~G. Winkler.
\newblock Emergence of active turbulence in microswimmer suspensions due to
  active hydrodynamic stress and volume exclusion.
\newblock {\em Communications Physics}, 5(1):49, 2022.

\bibitem{QHPL21_JCAM}
Y.~Qin, Y.~Hou, W.~Pei, and J.~Li.
\newblock A variable time-stepping algorithm for the unsteady {S}tokes/{D}arcy
  model.
\newblock {\em J. Comput. Appl. Math.}, 394:Paper No. 113521, 14, 2021.

\bibitem{ramaswamy2019active}
S.~Ramaswamy.
\newblock Active fluids.
\newblock {\em Nature Reviews Physics}, 1(11):640--642, 2019.

\bibitem{annurev-fluid-010816-060049}
D.~Saintillan.
\newblock Rheology of active fluids.
\newblock {\em Annual Review of Fluid Mechanics}, 50(Volume 50, 2018):563--592,
  2018.

\bibitem{MR813691}
L.~R. Scott and M.~Vogelius.
\newblock Norm estimates for a maximal right inverse of the divergence operator
  in spaces of piecewise polynomials.
\newblock {\em RAIRO Mod\'{e}l. Math. Anal. Num\'{e}r.}, 19(1):111--143, 1985.

\bibitem{SP24_IJNAM}
F.~Siddiqua and W.~Pei.
\newblock Variable time step method of {D}ahlquist, {L}iniger and {N}evanlinna
  ({DLN}) for a corrected {S}magorinsky model.
\newblock {\em International Journal of Numerical Analysis and Modeling},
  21(6):879--909, 2024.

\bibitem{MR4835947}
M.~Tan, J.~Cheng, and C.-W. Shu.
\newblock High order finite difference scheme with explicit-implicit-null
  time-marching for the compressible {N}avier-{S}tokes equations.
\newblock {\em J. Comput. Phys.}, 523:Paper No. 113626, 22, 2025.

\bibitem{toner1998flocks}
J.~Toner and Y.~Tu.
\newblock Flocks, herds, and schools: A quantitative theory of flocking.
\newblock {\em Physical review E}, 58(4):4828, 1998.

\bibitem{MR4736040}
B.~Wang, Y.~Zhang, and G.-a. Zou.
\newblock Unconditionally stable fully-discrete finite element numerical scheme
  for active fluid model.
\newblock {\em Internat. J. Numer. Methods Fluids}, 96(5):626--650, 2024.

\bibitem{XLWB19_CMAME}
J.~Xu, Y.~Li, S.~Wu, and A.~Bousquet.
\newblock On the stability and accuracy of partially and fully implicit schemes
  for phase field modeling.
\newblock {\em Comput. Methods Appl. Mech. Engrg.}, 345:826--853, 2019.

\bibitem{MR4866010}
Q.~Zhang, W.~Zhao, and G.-a. Zou.
\newblock A time variable stepping technique for solving the {N}avier-{S}tokes
  equations with an adaptive minimum dissipation criterion.
\newblock {\em Z. Angew. Math. Phys.}, 76(2):Paper No. 67, 25, 2025.

\end{thebibliography}

\end{document}